\newcommand{\textover}[3][l]{\makebox[\widthof{#3}][#1]{#2}}
\newcommand{\matdot}{\mathrm{.}}
\newcommand{\matcom}{\mathrm{,}}
\newcommand{\id}{\mathrm{id}}
\newcommand{\funCD}{\overline{\textover[c]{$\cdot$}{F}}}
\newcommand{\cleft}{\mathrm{C}}
\newcommand{\cleftcl}{\mathrm{Cleft}}
\newcommand{\scr}[1]{\mathcal{#1}}
\newcommand{\ndN}{\mathbb{N}}
\newcommand{\fK}{\mathbbm{k}}
\newcommand{\ot}{\otimes}
\newcommand{\ol}{\overline}
\newcommand{\catunit}{\mathbbm{1}}
\newcommand{\dcatunit}{\ol{\catunit}}
\newcommand{\conv}{*}
\newcommand{\coninv}[1]{#1^{-}}
\newcommand{\coinv}[1]{ {{#1}}^{\mathrm{co} \ol{H}}}
\newcommand{\bimu}{\mu}
\newcommand{\bicomu}{\Delta}
\newcommand{\biunit}{\eta}
\newcommand{\bicounit}{\varepsilon}
\newcommand{\antip}{S}
\newcommand{\dantip}{\ol{\antip}}
\newcommand{\mact}{\nu}
\newcommand{\coact}{\rho}
\newcommand{\brd}{c}
\newcommand{\sect}{\gamma}
\newcommand{\dbimu}{\ol{\mu}}
\newcommand{\dbicomu}{\ol{\bicomu}}
\newcommand{\dbiunit}{\ol{\biunit}}
\newcommand{\dbicounit}{\ol{\bicounit}}
\newcommand{\dbrd}{\brd_{H,\ol{H}}}
\newcommand{\sigmul}{\bimu_\sigma}
\newcommand{\sigsmashp}[2]{ {#1} \#_{{#2}} \ol{H} }
\newcommand{\sigsmash}[1]{ \sigsmashp{ {#1} }{\sigma} }
\newcommand{\cocyclesetR}{Z(R)}
\newcommand{\cocyclesetH}{Z(\scr{H})}
\newcommand{\cocyclesetHp}{Z'(\scr{H})}
\newcommand{\cocyclebij}{\Phi}
\newcommand{\cleftbij}{\Psi}
\theoremstyle{plain}
\newtheorem{thm}{Theorem}[section]
\newtheorem*{thm*}{Theorem}
\newtheorem*{classification*}{Classification theorem}
\newtheorem{lem}[thm]{Lemma}
\newtheorem{cor}[thm]{Corollary}
\newtheorem*{notation*}{Notation}
\newtheorem{prop}[thm]{Proposition}
\newtheorem{defi}[thm]{Definition}
\theoremstyle{remark}
\newtheorem{rema}[thm]{Remark}
\newtheorem*{rema*}{Remark}
\newtheorem{exa}[thm]{Example}
\numberwithin{equation}{section}
\begin{document}

\title[Two-cocycles and cleft extensions]{Two-cocycles and cleft extensions in left braided categories}
\author{Istv\'{a}n Heckenberger, Kevin Wolf}

\begin{abstract}
	We define two-cocycles and cleft extensions in categories that are not necessarily braided, but where specific objects braid from one direction, like for a Hopf algebra $H$ a Yetter-Drinfeld module braids from the left with $H$-modules.
	We will generalize classical results to this context and give some application for the categories of Yetter-Drinfeld modules and $H$-modules. In particular we will describe liftings of coradically graded Hopf algebras in the category of Yetter-Drinfeld modules with these techniques.
\end{abstract}

\maketitle

\section*{Introduction}
The motivation of this work was to provide an abstract framework and a new description of the set $\cleftcl'(\scr{H})$ appearing in \cite{guidedtour}, section~3. This set arises in the context of finding all liftings of coradically graded Hopf algebras: We have a finite dimensional connected graded Hopf algebra $R \in \prescript{H}{H}{\scr{YD}}$, where $H$ is a finite dimensional cosemisimple Hopf algebra over some field $\fK$, and are interested in Hopf algebras $L$, such that $\mathrm{gr}L \cong R \# H=: \scr{H}$. Therefor it is enough to look at two-cocycles $\sigma : \scr{H} \ot \scr{H} \rightarrow \fK$ and its corresponding cocycle deformation Hopf algebra $\scr{H}_\sigma$, such that $\mathrm{gr}\scr{H}_\sigma \cong \scr{H}$, or equivalently at $\scr{H}$-cleft objects where some corresponding two-cocycle has that property. 

Some results have already been established for describing these cleft objects: 
There exists some $R$-comodule algebra $\scr{E} \in H\mathrm{-mod}$ and a convolution invertible comodule morphism $\gamma: R \rightarrow \scr{E}$ in $H\mathrm{-mod}$, such that $E = \scr{E} \# H$ (\cite{MR3712438}, Lemma 4.1). This means $\scr{E}$ basically has the properties that would describe an $R$-cleft object, except that it is only an object in $H\mathrm{-mod}$ and not in $\prescript{H}{H}{\scr{YD}}$. We could correspond a morphism $\sigma : R \ot R \rightarrow \fK$ in $H\mathrm{-mod}$ to $\gamma$, that then again basically has all the properties of a two-cocycle, except that it is not a morphism in $\prescript{H}{H}{\scr{YD}}$.

This is why we establish a new notion of two-cocycles and cleft extensions in a more general sense, similar to the center construction of a monoidal category (Definitions \ref{def_cocycle} and \ref{def_cleft}). We will see that the known results for two-cocycles and cleft extensions (\cite{MR1765310}, section~1) can be generalized to this context: We can associate a multiplication $\sigmul$ to a two-cocycle $\sigma$ that then is associative (Proposition \ref{prop_sigmul_assoc}) and there is a bijective correspondence between two-cocylces and cleft extensions with fixed section up to isomorphy (Theorem~\ref{thm_cleft_cocycle}).

We will then give an application in the category $\prescript{H}{H}{\scr{YD}}$, where we describe these new cleft objects and two-cocycles by classical cleft objects and two-cocycles (Proposition \ref{prop_cleftcocyclebij}, Corollary \ref{cor_cleftcocyclebij}). From this we can also describe $\cleftcl'(\scr{H})$ (Proposition \ref{cor_cleftcocyclebij2}, Corollary \ref{cor_cleftcocyclebij3}).

\section{Preliminaries}

Let $\scr{C}$ and $\scr{D}$ be strict monoidal categories. We assume that equalizers exist in $\scr{D}$ and denote the unit object of $\scr{C}$ with $\catunit$.

\begin{defi}
	Let $\funCD :\scr{C} \rightarrow \scr{D}$ be a strict monoidal functor. A \textbf{left braiding} $\brd$ for $\funCD$ is a family of isomorphisms $\brd_{X, V} : \ol{X} \ot V \rightarrow V \ot \ol{X}$ in $\scr{D}$, where $X \in \scr{C}$ and $V \in\scr{D}$, such that
	\begin{enumerate}
		\item For objects $X,Y \in \scr{C}$ and $V,W \in \scr{D}$, a morphism $f: X \rightarrow Y$ in $\scr{C}$ and a morphism $g: V \rightarrow W$ in $\scr{D}$ the diagram
		\begin{align*}\begin{tikzcd}[ampersand replacement=\&]
		\ol{X} \ot V \arrow{d}{\brd_{X, V}} \arrow{r}{\ol{f} \ot g} \& \ol{Y} \ot W \arrow{d}{\brd_{Y, W}} \\
		V \ot \ol{X} \arrow{r}{g \ot \ol{f}} \& W \ot \ol{Y}
		\end{tikzcd}\end{align*}
		commutes.
		\item For $X,Y \in \scr{C}$, $V \in \scr{D}$ the diagram
		\begin{align*}\begin{tikzcd}[ampersand replacement=\&]
		\ol{X} \ot \ol{Y} \ot V \arrow[']{dr}{\id \ot \brd_{Y, V} } \arrow{rr}{\brd_{X\ot Y, V}} \& \& V \ot \ol{X} \ot \ol{Y} \\
		\& \ol{X} \ot V \ot \ol{Y} \arrow[']{ur}{\brd_{X, V} \ot \id}
		\end{tikzcd}\end{align*}
		commutes.
		\item For $X \in \scr{C}$, $V,W \in \scr{D}$ the diagram
		\begin{align*}\begin{tikzcd}[ampersand replacement=\&]
		\ol{X} \ot V \ot W \arrow[']{dr}{\brd_{X, V} \ot \id } \arrow{rr}{\brd_{X, V\ot W}} \& \& V \ot W \ot \ol{X} \\
		\& V\ot \ol{X} \ot W \arrow[']{ur}{\id \ot \brd_{X, W}}
		\end{tikzcd}\end{align*}
		commutes.
	\end{enumerate}
\end{defi}
These are the usual axioms that define a braiding in a strict monoidal category. Let $\funCD :\scr{C} \rightarrow \scr{D}$ be a strict monoidal functor and let $\brd$ be a left braiding for $\funCD$.

\begin{rema}
	For objects $X\in \scr{C}$, $V\in \scr{D}$ we have
	\begin{align*}
	\brd_{\catunit, V} = \id_V && \brd_{X, \dcatunit} = \id_{\ol{X}} \matcom
	\end{align*}
	following from axioms (2) and (3) of the definition of a left braiding.
\end{rema}

\begin{exa}
	The center $\scr{C}:= Z(\scr{D})$ of a monoidal category $\scr{D}$ (as described in \cite{MR1321145}, XIII.4) along with the forgetful functor $\scr{C} \rightarrow \scr{D}$ naturally has a left braiding (or right braiding, depending on the definition).
\end{exa}

\begin{exa} \label{exa_yetter_drinfeld}
	Let $\fK$ be a field and $H$ be a Hopf algebra over $\fK$, $\scr{C} =\prescript{H}{H}{\scr{YD}}$ be the category of left-left Yetter-Drinfeld modules over $H$ and $\scr{D}$ be the category of left $H$-modules. The forgetful functor $\scr{C} \rightarrow \scr{D}$, that just forgets the $H$-comodule structure, has a left braiding: For a Yetter-Drinfeld module $X \in \scr{C}$ and an $H$-module $Y \in \scr{D}$ define
	\begin{align*}
	\brd_{X,Y} : X \ot Y \rightarrow Y \ot X, x \ot y \rightarrow x_{(-1)} y \ot x_{(0)} \matdot
	\end{align*}
	If $Y$ is also a Yetter-Drinfeld module, then this braiding is just the usual braiding in the braided category $\prescript{H}{H}{\scr{YD}}$. Showing that $\brd_{X,Y}$ is an isomorphism of $H$-modules and that axioms (1), (2) and (3) hold is completely analogous to showing this in the Yetter-Drinfeld case (for this at no point a comodule structure on $Y$ is needed).
\end{exa}

\begin{notation*}
	Let $\scr{M}$ be a monoidal category. For an object $X \in \scr{M}$ and $n \in \ndN$ we denote $\id^n = \id_X^n := \id_{X^{\ot n}} = \id \ot \cdots \ot \id : X^{\ot n} \rightarrow X^{\ot n}$.
\end{notation*}

\begin{defi}
	We define \textbf{algebras} and \textbf{coalgebras} in a strict mon\-oidal category $\scr{M}$ as usual, as they require no notion of a braiding. If $A \in \scr{M}$ is an algebra, then we denote its multiplication with $\bimu_A$ and its unit with $\biunit_A$.
	If $C \in \scr{M}$ is a coalgebra, then we denote its comultiplication with $\bicomu_C$ and its counit with $\bicounit_C$. Moreover for $n \in \ndN$ we denote \begin{align*}\bimu_A^n := \bimu_A (\bimu_A \ot \id) \cdots (\bimu_A \ot \id^{n-1}) : A^{\ot n+1} \rightarrow A \end{align*} for the $n$-times multiplication. We have $\bimu_A=\bimu_A^1$. Similarly, we denote \begin{align*}\bicomu_C^n := (\bicomu_C \ot \id^{n-1}) \cdots (\bicomu_C \ot \id) \bicomu_C: C \rightarrow  C^{\ot n+1}  \end{align*} for the $n$-times comultiplication, such that $\bicomu_C^1 = \bicomu_C$.
\end{defi}

\begin{rema} \label{rem_tensor_alg_structure}
	Since we are not in a braided category, tensor products of algebras or coalgebras are not naturally algebras or coalgebras again. However combining our two categories $\scr{C}$,$\scr{D}$ and the left braiding $\brd$, some specific tensor products inherit these algebraic structures:
	\begin{itemize}
	\item If $A$ is an algebra in $\scr{D}$ and if $B$ is an algebra in $\scr{C}$, then $A \ot \ol{B}$ is an algebra in $\scr{D}$ where the unit is $\biunit_A \ot \ol{\biunit_B}$ and the multiplication is $(\bimu_A \ot \ol{\bimu_B}) (\id \ot \brd_{B,A} \ot \id)$. 
	\item If $A$ is a coalgebra in $\scr{D}$ and if $B$ is a coalgebra in $\scr{C}$, then $\ol{B}\ot A$ is a coalgebra in $\scr{D}$ with counit $\ol{\bicounit_B} \ot \bicounit_A$ and comultiplication $(\id \ot \brd_{B,A} \ot \id) (\ol{\bicomu_B} \ot \bicomu_A)$.
	\end{itemize}
	In particular if $A,B \in \scr{C}$ are (co)algebras, then $\ol{A}$, $\ol{B}$ and $\ol{A} \ot \ol{B}$ are (co)algebras in $\scr{D}$.
\end{rema}

\begin{defi}
	Let $\scr{M}$ be a monoidal category, $A \in \scr{M}$ be an algebra and $C \in \scr{M}$ a coalgebra. Given two morphisms $f,g : C \rightarrow A$ we define the \textbf{convolution product}
	\begin{align*}
	f \conv g := \bimu_A (f \ot g) \bicomu_C : C \rightarrow A \matdot
	\end{align*}
	We say a morphism $f : C\rightarrow A$ is \textbf{convolution invertible}, if there exists a morphism $g:C \rightarrow A$, such that $f \conv g = \biunit_A \bicounit_C = g \conv f$. In this case we denote $\coninv{f} := g$.
\end{defi}

The convolution product is associative and has unit $\biunit_A \bicounit_C : C \rightarrow A$.

\begin{lem} \label{lem_conv_alg_morphism}
	Let $\scr{M}$ be a monoidal category, $A, A' \in \scr{M}$ be algebras, $C,C' \in \scr{M}$ coalgebras, $f,g : C \rightarrow A$ morphisms in $\scr{M}$, $\psi : A \rightarrow A'$ an algebra morphism and $\phi : C' \rightarrow C$ a coalgebra morphism. Then
	\begin{align*}
		\psi (f \conv g) \phi = (\psi f \phi) \conv (\psi g \phi) \matdot
	\end{align*}
	In particular if $f$ is convolution invertible, hen $ \psi f \phi$ is convolution invertible with inverse $\psi \coninv{f} \phi$.
\end{lem}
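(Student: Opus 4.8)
The plan is a direct computation that simply unwinds the definition of the convolution product, using only three facts: that $\psi$ respects multiplication, that $\phi$ respects comultiplication, and the interchange law (bifunctoriality of $\ot$).

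First I would write $\psi (f \conv g) \phi = \psi\, \bimu_A\, (f \ot g)\, \bicomu_C\, \phi$. Since $\psi$ is an algebra morphism we may replace $\psi\, \bimu_A$ by $\bimu_{A'} (\psi \ot \psi)$, and since $\phi$ is a coalgebra morphism we may replace $\bicomu_C\, \phi$ by $(\phi \ot \phi)\, \bicomu_{C'}$. Substituting and then applying the interchange law $(\psi \ot \psi)(f \ot g)(\phi \ot \phi) = (\psi f \phi) \ot (\psi g \phi)$ gives
\begin{align*}
\psi (f \conv g) \phi = \bimu_{A'}\bigl((\psi f \phi) \ot (\psi g \phi)\bigr)\, \bicomu_{C'},
\end{align*}
which is exactly $(\psi f \phi) \conv (\psi g \phi)$ by definition of the convolution product of the morphisms $C' \to A'$.

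For the last assertion I would apply the identity just proved with $g := \coninv{f}$, obtaining $(\psi f \phi) \conv (\psi \coninv{f} \phi) = \psi (f \conv \coninv{f}) \phi = \psi\, \biunit_A\, \bicounit_C\, \phi$. Because $\psi$ is unital and $\phi$ is counital, $\psi\, \biunit_A = \biunit_{A'}$ and $\bicounit_C\, \phi = \bicounit_{C'}$, so the right-hand side equals $\biunit_{A'}\bicounit_{C'}$, the unit for convolution of morphisms $C' \to A'$. The symmetric computation gives $(\psi \coninv{f} \phi) \conv (\psi f \phi) = \biunit_{A'}\bicounit_{C'}$ as well, so $\psi f \phi$ is convolution invertible with inverse $\psi \coninv{f} \phi$.

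There is essentially no obstacle here: the statement is a formal consequence of the axioms for algebra and coalgebra morphisms. The only points requiring a little care are the bookkeeping in the interchange law and the observation that being an algebra (resp.\ coalgebra) morphism involves two conditions — one for the (co)multiplication and one for the (co)unit — of which the (co)unit conditions enter only in the proof of the invertibility claim.
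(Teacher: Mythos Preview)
Your proof is correct and follows exactly the same approach as the paper: the paper's proof simply states that the claim follows from $\psi \bimu_A = \bimu_{A'} (\psi \ot \psi)$ and $\bicomu_C \phi = (\phi \ot \phi) \bicomu_{C'}$, which is precisely what you have unwound in detail. Your additional remarks on the (co)unit conditions entering only in the invertibility part are accurate and make explicit what the paper leaves implicit.
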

\begin{proof}
	The claim directly follows from $\psi \bimu_A = \bimu_{A'} (\psi \ot \psi)$ and $\bicomu_C \phi = (\phi \ot \phi) \bicomu_{C'}$.
\end{proof}

\begin{defi} \label{def_hopf}
	Suppose $H \in \scr{C}$ is an object that is an algebra with multiplication $\bimu$ and unit $\biunit$ and that is a coalgebra with comultiplication $\bicomu$ and counit $\bicounit$ (all morphisms in $\scr{C}$).
	We call $H$ a \textbf{bialgebra}, if $\dbimu: \ol{H} \ot \ol{H} \rightarrow \ol{H}$ and $\dbiunit: \dcatunit \rightarrow \ol{H}$ are coalgebra morphisms in $\scr{D}$ (or equivalently if $\dbicomu: \ol{H} \rightarrow \ol{H} \ot \ol{H}$ and $\dbicounit: \ol{H} \rightarrow \dcatunit$ are algebra morphisms in $\scr{D}$). Furthermore $H$ is called a \textbf{Hopf algebra} if the identity $\id:H \rightarrow H$ is convolution invertible. The inverse $\coninv{\id} : H \rightarrow H$ is called \textbf{antipode of $H$}.
\end{defi}

\begin{rema}
	It is required that we assume a Hopf algebra $H$ to be an object in $\scr{C}$ and its structure morphisms $\bimu_H$, $\biunit_H$, $\bicomu_H$ and $\bicounit_H$ to be morphisms in $\scr{C}$, because we want to be able to pull these morphisms through the left braiding $\brd$ in both directions. For example for $X\in \scr{D}$ the relation
	\begin{align*}
		\brd_{H,X} (\ol{\bimu_H} \ot \id_X) = (\id_X \ot \ol{\bimu_H} ) \brd_{H\ot H,X}
	\end{align*}
	would not hold if we assume the multiplication $\ol{H} \ot \ol{H} \rightarrow \ol{H}$ in $\scr{D}$ is only a morphism in $\scr{D}$ without preimage of $\funCD$.
\end{rema}

\begin{defi}
	We define (left and right) \textbf{modules} and \textbf{comodules} over algebras and coalgebras, respectively, in a monoidal category $\scr{M}$ as usual, as they require no notion of a braiding. If $C \in \scr{M}$ is a coalgebra and $M \in \scr{M}$ a $C$-comodule, we denote its coaction with $\coact_M$.
\end{defi}

\begin{rema} \label{rem_tensor_mod_structure}
	We again have to carefully consider when tensor products of (co)modules inherit (co)module structure: Let $H \in \scr{C}$ be a bialgebra.
	\begin{itemize}
		\item If $M,N \in \scr{D}$ are left $\ol{H}$-modules with actions $\varphi_M, \varphi_N$, then $M \ot N$ is a left $\ol{H}$-module with action \begin{align*} (\varphi_M \ot \varphi_N) (\id \ot \brd_{H,M} \ot \id) (\ol{\bicomu_H} \ot \id \ot \id) \matdot \end{align*}
		\item If $M,N \in \scr{D}$ are right $\ol{H}$-comodules, then $M \ot N$ is a right $\ol{H}$-comodule with coaction \begin{align*}  (\id \ot \id \ot \ol{\bimu_H}) (\id \ot \brd_{H,N} \ot \id) (\coact_M \ot \coact_N) \matdot \end{align*}
	\end{itemize}
	Consider that a tensor product of right $\ol{H}$-modules is not naturally a right $\ol{H}$-module again and a tensor product of left $\ol{H}$-comodules is not a $\ol{H}$-comodule again, since we only have the braiding from one direction.
\end{rema}

\begin{defi}
	Suppose $H \in \scr{C}$ is a bialgebra. A \textbf{(left) $\ol{H}$-module algebra} is an algebra $A \in \scr{D}$ that is a left $\ol{H}$-module, such that its multiplication $\bimu_A : A\ot A \rightarrow A$ and unit $\biunit_A : \dcatunit \rightarrow A$ are left $\ol{H}$-module morphisms.
	
	A \textbf{(right) $\ol{H}$-comodule algebra} is an algebra $B \in \scr{D}$ that is a right $\ol{H}$-comodule, such that its multiplication $\bimu_B$ and unit $\biunit_B$ are right $\ol{H}$-comodule morphisms (or equivalently if the coaction $\coact_B: B \rightarrow B \ot \ol{H}$ is an algebra morphism).
\end{defi}

\begin{rema}
	A definition of a \textbf{right} $\ol{H}$-module algebra or a $\textbf{left}$ $\ol{H}$-comodule algebra, where $H \in \scr{C}$ is a bialgebra, is not possible in our context, see Remark \ref{rem_tensor_mod_structure}.
\end{rema}

\begin{defi}
	Suppose $H \in \scr{C}$ is a bialgebra. A \textbf{measuring on $\ol{H}$} is an algebra $A \in \scr{D}$ and a morphism $\mact_A: \ol{H} \ot A \rightarrow A$ such that
	\begin{align}
		\mact_A (\ol{\biunit_H} \ot \id_A) =& \id_A  \matcom
		\\ \mact_A (\id_{\ol{H}} \ot \biunit_A) =& \biunit_A \ol{\bicounit_H}  \matcom
		\\  \mact_A (\id_{\ol{H}} \ot \bimu_A) =& \bimu_A (\mact_A \ot \mact_A) (\id_{\ol{H}} \ot \brd_{H, A} \ot \id_A) (\ol{\bicomu_H} \ot \id_A^2) \label{defi_meas3}
		\matdot
	\end{align}
	If $A$ is a measuring on $\ol{H}$, then we always denote the measuring morphism with $\mact_A$. Moreover we denote
	\begin{align*}
	\brd^\nu_{H,A} :=& (\mact_A \otimes \id)(\id\otimes \brd_{H,A})(\dbicomu \otimes \id) : \ol{H} \otimes A \to A\otimes \ol{H} \matcom
	\end{align*}
	such that $(\ref{defi_meas3})$ can be written as
	\begin{align}
	\mact_A (\id_{\ol{H}} \ot \bimu_A) =& \bimu_A (\id_A \ot \mact_A) (\brd^\nu_{H,A} \ot \id_A)  \label{defi_meas3p} \matdot
	\end{align}
	
	Finally a morphism $f: A \rightarrow A'$ between two measurings $A,A'$ on $\ol{H}$ is called a measuring morphism, if it is an algebra morphism and $f \mact_A = \mact_{A'} (\id \ot f)$.
\end{defi}

\begin{exa}
	Let $H \in \scr{C}$ be a bialgebra. Trivially $\dcatunit$ is a measuring on $\ol{H}$, where $\mact_{\dcatunit} = \ol{\bicounit_H}$.
\end{exa}

\begin{rema}
	Let $H \in \scr{C}$ be a bialgebra. By definition an $\ol{H}$-module algebra is a measuring $A$ on $\ol{H}$ where the measuring morphism $\mact_A$ is associative, i.e. $\mact_A (\id_{\ol{H}} \ot \mact_A) = \mact_A ( \dbimu \ot \id_A)$.
\end{rema}

\begin{defi}
	Let $H \in \scr{C}$ be a bialgebra and let $M \in \scr{D}$ be a right $\ol{H}$-comodule. If $M_0\in \scr{D}$ and $\iota : M_0 \rightarrow M$ is a morphism, such that 
	\begin{align*}
	\begin{tikzcd}[ampersand replacement=\&]
	M_0 \arrow{r}{\iota} \& M \arrow[shift left]{r}{\coact_M} \arrow[',shift right]{r}{\id \ot \ol{\biunit_H}} \& M \ot \ol{H}
	\end{tikzcd} 
	\end{align*}
	is an equalizer diagram, then we call $M_0$ the \textbf{coinvariants of $M$} and denote $\coinv{M} := M_0$.
\end{defi}

\begin{exa} \label{exa_equalizer_alg1}
	Let $H \in \scr{C}$ be a bialgebra and let $B$ be an $\ol{H}$-comodule algebra. Then $\coinv{B}$ has an algebra structure: Let $\iota: \coinv{B} \rightarrow B$ be the equalizer morphism. 
	By definition we have
	\begin{align*}
		\coact_B \biunit_B &= (\id \ot \ol{\biunit_H}) \biunit_B \matcom \\
		\coact_B \bimu_B (\iota \ot \iota) &= (\id \ot \ol{\biunit_H}) \bimu_B (\iota \ot \iota)
	\end{align*}
	hence there exist unique morphisms $\biunit_{\coinv{B}} : \dcatunit \rightarrow \coinv{B}$ and $\bimu_{\coinv{B}} : \coinv{B} \ot \coinv{B} \rightarrow \coinv{B}$, such that $\iota \biunit_{\coinv{B}} = \biunit_B$ and $\iota \bimu_{\coinv{B}} = \bimu_B (\iota \ot \iota)$. Now
	\begin{align*}
		\iota \bimu_{\coinv{B}} ( \id \ot \biunit_{\coinv{B}}) = \bimu_B (\iota \ot \iota) ( \id \ot \biunit_{\coinv{B}}) = \bimu_B (\iota \ot \biunit_B) = \iota \, \id \matcom
	\end{align*}
	Hence $\bimu_{\coinv{B}} ( \id \ot \biunit_{\coinv{B}}) = \id$ by uniqueness of such morphisms. Similary $\bimu_{\coinv{B}} ( \biunit_{\coinv{B}} \ot \id ) = \id$. Finally
	\begin{align*}
		\iota \bimu_{\coinv{B}} ( \id \ot \bimu_{\coinv{B}}) =& \bimu_B (\id \ot \bimu_B) (\iota \ot \iota \ot \iota) \\=& \bimu_B ( \bimu_B \ot \id) (\iota \ot \iota \ot \iota) = \iota \bimu_{\coinv{B}} ( \bimu_{\coinv{B}} \ot \id)
		\matcom
	\end{align*}
	implying the associativity of $\bimu_{\coinv{B}}$.
\end{exa}

\section{Two-cocycles and crossed products}
Let $H \in \scr{C}$ be a bialgebra with multiplication $\bimu$, unit $\biunit$, comultiplication $\bicomu$ and counit $\bicounit$ (these will be the only structure morphisms where the subscript is omitted). Moreover let $A \in \scr{D}$ be a measuring on $\ol{H}$ and let $\sigma : \ol{H} \ot \ol{H} \rightarrow A$ be a morphism in $\scr{D}$.

\begin{defi} \label{def_cocycle}
	We define the morphisms $\hat{\sigma}: \ol{H} \ot \ol{H} \rightarrow A \ot \ol{H}$ and $\sigmul : (A \ot \ol{H}) \ot (A \ot \ol{H}) \rightarrow (A \ot \ol{H})$ as follows: 
	\begin{align*}
		\hat{\sigma} :=& (\sigma \otimes \dbimu)\bicomu_{\ol{H} \otimes \ol{H}} \matcom
		\\
		\sigmul :=& (\bimu_A \ot \id) (\bimu_A \ot \hat{\sigma}) (\id \ot \brd^\nu_{H,A} \ot \id) \matdot
	\end{align*}
	We call $\sigma : \ol{H} \ot \ol{H} \rightarrow A$ a \textbf{(two-)cocycle}, if it is convolution invertible and if the following relations hold:
	\begin{align}
		\bimu_A( \id \ot \sigma)(\hat{\sigma} \ot \id_{\ol{H}}) &= \bimu_A (\id \ot \sigma)(\brd^\mact_{H,A} \ot \id)(\id_{\ol{H}} \ot \hat{\sigma}) \label{defi_cocycle_rel1} \matcom 
		\\  \bimu_A(\id \ot \mact_A)(\hat{\sigma} \ot \id_A) &= \bimu_A (\id \ot \sigma)(\brd^\mact_{H,A} \ot \id)(\id_{\ol{H}} \otimes \brd^\mact_{H,A}) \label{defi_cocycle_rel2} \matcom 
		\\ \sigma (\dbiunit \ot \dbiunit) &= \biunit_A \label{defi_cocycle_rel3} \matdot 
	\end{align}
\end{defi}

\begin{rema} \label{rema_def_cocycle_rels}
	The following relations hold:
	\begin{align*}
		(\id \ot \dbicounit) \hat{\sigma} =& \sigma \matcom & 
		(\id \ot \dbicounit) \brd^\mact_{H,A} =& \mact_A \matcom &
		\brd^\mact_{H,A} (\id \ot \biunit_A) =& \biunit_A \ot \id \matdot 
	\end{align*}
\end{rema}

\begin{rema} \label{rema_cocycle_axiomtwo}
	Observe that $(\ref{defi_cocycle_rel2})$ is always true if $A=\dcatunit$, since then $\mact_A=\dbicounit$ and $\brd^\mact_{H,A} = \id_{\ol{H}}$. It can be viewed as a generalisation of the associativity of $\mact_A$: Indeed if $\sigma = \biunit_A (\dbicounit \ot \dbicounit)$, then $(\ref{defi_cocycle_rel1})$ and $(\ref{defi_cocycle_rel3})$ trivially hold and $(\ref{defi_cocycle_rel2})$ is precisely
	\begin{align*}
		\mact_A (\dbimu \ot \id) = \mact_A (\id \ot \mact_A) \matcom
	\end{align*}
	i.e. the associativity of $\mact_A$. This also implies that $\sigma = \biunit_A (\dbicounit \ot \dbicounit)$ is a cocycle if and only if $A$ is a $\ol{H}$-module algebra. In this case
	\begin{align*}
	\sigmul = (\bimu_A \ot \dbimu) (\id \ot \brd^\mact_{H,A} \ot \id) \matcom
	\end{align*}
	is an associative multiplication on $A \ot \ol{H}$ with unit $\biunit_A \ot \dbiunit$. This algebra structure is known as the \textbf{smash product $A\# \ol{H}$}.
\end{rema}

\begin{rema}
	An important thing to observe is that $\sigma$ and the given multiplication $\sigmul$ are morphisms in $\scr{D}$ and not in $\scr{C}$. Former considerations, like \cite{MR1765310} had a setting where there is only one braided monoidal category, i.e. $\scr{C}=\scr{D}$, $\funCD=\id$ and thus cocycles and multiplications always were required to be morphisms in that category.
\end{rema}

\begin{lem} \label{lem_cocycle_rels}
	If $\sigma : \ol{H} \ot \ol{H} \rightarrow A$ is a cocycle, then the following relations hold: 
	\begin{align}
	(\sigma \ot \dbicounit) \conv (\sigma (\dbimu \ot \id_{\ol{H}})) &= \left(\mact_A (\id_{\ol{H}} \ot \sigma)\right) \conv \left(\sigma (\id_{\ol{H}} \ot \dbimu) \right) \label{lem_cocycle_rels1} \\
	(\sigma (\dbimu \ot \id_{\ol{H}})) \conv \left(\coninv{\sigma} (\id_{\ol{H}} \ot \dbimu) \right) &= (\coninv{\sigma} \ot \dbicounit) \conv \left(\mact_A (\id_{\ol{H}} \ot \sigma)\right) \label{lem_cocycle_rels2}
	\\ \left(\sigma (\id_{\ol{H}} \ot \dbimu) \right) \conv (\coninv{\sigma} (\dbimu \ot \id_{\ol{H}})) &= \left(\mact_A (\id_{\ol{H}} \ot \coninv{\sigma})\right) \conv (\sigma \ot \dbicounit) \label{lem_cocycle_rels3}
	\\ \sigma (\dbiunit \ot \id) =\,& \biunit_A \dbicounit = \sigma(\id \ot \dbiunit) \label{lem_cocycle_rels5}
	\\ \coninv{\sigma} (\dbiunit \ot \id) =\,& \biunit_A \dbicounit = \coninv{\sigma} (\id \ot \dbiunit) \matdot \label{lem_cocycle_rels6}
	\end{align}
\end{lem}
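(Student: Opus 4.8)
The strategy is: identity $(\ref{lem_cocycle_rels1})$ is the cocycle relation $(\ref{defi_cocycle_rel1})$ rewritten as an equality of convolution products $\ol{H}^{\ot 3}\to A$; relations $(\ref{lem_cocycle_rels2})$ and $(\ref{lem_cocycle_rels3})$ then fall out of $(\ref{lem_cocycle_rels1})$ by convolving with $\coninv{\sigma}$ on suitable sides; and $(\ref{lem_cocycle_rels5})$, $(\ref{lem_cocycle_rels6})$ follow from $(\ref{defi_cocycle_rel1})$, $(\ref{defi_cocycle_rel3})$ together with the convolution invertibility of $\sigma$. I first record some bookkeeping, all verified using only the bialgebra axioms for $H$ and axioms (1)--(3) of the left braiding. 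Iterating Remark~\ref{rem_tensor_alg_structure} from the right makes each $\ol{H}^{\ot n}$ a coalgebra in $\scr{D}$, whose comultiplication is the $\dbrd$-braided diagonal. Applying $\dbiunit$ or $\dbicounit$ in any tensor slot, and $\dbimu$ to any two adjacent slots, yields coalgebra morphisms between the corresponding $\ol{H}^{\ot n}$'s (one uses that $\dbiunit,\dbicounit,\dbimu$ are coalgebra morphisms --- the last by Definition~\ref{def_hopf} --- and naturality, axiom~(1)). Hence, by Lemma~\ref{lem_conv_alg_morphism}, precomposition with any of these is multiplicative for $\conv$; combined with $\dbicounit\dbiunit=\id$ and $\dbicounit\dbimu=\dbicounit\ot\dbicounit$ this gives, for $\phi,\psi:\ol{H}^{\ot 2}\to A$,
\begin{align*}
(\phi\ot\dbicounit)\conv(\psi\ot\dbicounit)=(\phi\conv\psi)\ot\dbicounit,\qquad \bigl(\phi(\dbimu\ot\id_{\ol{H}})\bigr)\conv\bigl(\psi(\dbimu\ot\id_{\ol{H}})\bigr)=(\phi\conv\psi)(\dbimu\ot\id_{\ol{H}})
\end{align*}
(and likewise with $\id_{\ol{H}}\ot\dbimu$), as maps $\ol{H}^{\ot 3}\to A$, where on the right $\phi\conv\psi$ is taken over $\ol{H}^{\ot 2}$; in particular these send $(\coninv{\sigma},\sigma)$ and $(\sigma,\coninv{\sigma})$ to the convolution unit of $\mathrm{Hom}_{\scr{D}}(\ol{H}^{\ot 3},A)$. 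Finally, from $(\ref{defi_meas3p})$, $\mact_A(\id_{\ol{H}}\ot\biunit_A)=\biunit_A\dbicounit$ and naturality of $\brd$ one checks that $\phi\mapsto\mact_A(\id_{\ol{H}}\ot\phi)$ is a unital homomorphism $(\mathrm{Hom}_{\scr{D}}(\ol{H}^{\ot 2},A),\conv)\to(\mathrm{Hom}_{\scr{D}}(\ol{H}^{\ot 3},A),\conv)$, and that $\brd^{\nu}_{H,A}(\dbiunit\ot\id_A)=\id_A\ot\dbiunit$.

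\emph{Relation $(\ref{lem_cocycle_rels1})$.} Writing out $\hat{\sigma}=(\sigma\ot\dbimu)\bicomu_{\ol{H}\ot\ol{H}}$, the left hand side of $(\ref{defi_cocycle_rel1})$ equals $\bimu_A\bigl(\sigma\ot\sigma(\dbimu\ot\id_{\ol{H}})\bigr)(\bicomu_{\ol{H}\ot\ol{H}}\ot\id_{\ol{H}})$, which --- using that $\id_{\ol{H}}\ot\id_{\ol{H}}\ot\dbicounit$ is a coalgebra morphism and that $\bicomu_{\ol{H}^{\ot 3}}$ restricts along it to $\bicomu_{\ol{H}\ot\ol{H}}\ot\id_{\ol{H}}$ --- is precisely $(\sigma\ot\dbicounit)\conv\bigl(\sigma(\dbimu\ot\id_{\ol{H}})\bigr)$. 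For the right hand side of $(\ref{defi_cocycle_rel1})$, unfold $\brd^{\nu}_{H,A}$ and the braiding inside $\bicomu_{\ol{H}^{\ot 3}}$ and pull $\sigma$ through them by axiom~(1) applied with $f=\id_H$, $g=\sigma$, i.e.\ $\brd_{H,A}(\id_{\ol{H}}\ot\sigma)=(\sigma\ot\id_{\ol{H}})\brd_{H,\ol{H}\ot\ol{H}}$; together with the homomorphism property of $\mact_A(\id_{\ol{H}}\ot-)$ this turns the right hand side into $\bigl(\mact_A(\id_{\ol{H}}\ot\sigma)\bigr)\conv\bigl(\sigma(\id_{\ol{H}}\ot\dbimu)\bigr)$. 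Thus $(\ref{defi_cocycle_rel1})$ is literally $(\ref{lem_cocycle_rels1})$.

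\emph{Relations $(\ref{lem_cocycle_rels2})$--$(\ref{lem_cocycle_rels6})$.} Convolving $(\ref{lem_cocycle_rels1})$ on the left with $\coninv{\sigma}\ot\dbicounit$, the displayed identities make $(\coninv{\sigma}\ot\dbicounit)\conv(\sigma\ot\dbicounit)$ the convolution unit, so the left hand side collapses to $\sigma(\dbimu\ot\id_{\ol{H}})$; convolving the result on the right with $\coninv{\sigma}(\id_{\ol{H}}\ot\dbimu)$ and using that $\bigl(\sigma(\id_{\ol{H}}\ot\dbimu)\bigr)\conv\bigl(\coninv{\sigma}(\id_{\ol{H}}\ot\dbimu)\bigr)$ is again the convolution unit yields exactly $(\ref{lem_cocycle_rels2})$. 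For $(\ref{lem_cocycle_rels3})$ run the same argument, now convolving on the left with $\mact_A(\id_{\ol{H}}\ot\coninv{\sigma})$ (so that $\mact_A(\id_{\ol{H}}\ot\coninv{\sigma})\conv\mact_A(\id_{\ol{H}}\ot\sigma)=\mact_A(\id_{\ol{H}}\ot(\coninv{\sigma}\conv\sigma))$ is the unit) and on the right with $\coninv{\sigma}(\dbimu\ot\id_{\ol{H}})$. For $(\ref{lem_cocycle_rels5})$, precompose $(\ref{defi_cocycle_rel1})$ with $\dbiunit\ot\dbiunit\ot\id_{\ol{H}}$: since $\dbiunit$ is a coalgebra morphism and $\sigma(\dbiunit\ot\dbiunit)=\biunit_A$ one gets $\hat{\sigma}(\dbiunit\ot\dbiunit)=\biunit_A\ot\dbiunit$, and together with $\brd^{\nu}_{H,A}(\dbiunit\ot\id_A)=\id_A\ot\dbiunit$ and $\brd^{\nu}_{H,A}(\id_{\ol{H}}\ot\biunit_A)=\biunit_A\ot\id_{\ol{H}}$ (Remark~\ref{rema_def_cocycle_rels}) the two sides reduce to $f:=\sigma(\dbiunit\ot\id_{\ol{H}})$ and $f\conv f$, so $f\conv f=f$; as $\dbiunit\ot\id_{\ol{H}}:\ol{H}\to\ol{H}\ot\ol{H}$ is a coalgebra morphism, $f$ is convolution invertible with $\coninv{f}=\coninv{\sigma}(\dbiunit\ot\id_{\ol{H}})$ (Lemma~\ref{lem_conv_alg_morphism}), and convolving $f\conv f=f$ with $\coninv{f}$ gives $f=\biunit_A\dbicounit$; precomposing instead with $\id_{\ol{H}}\ot\dbiunit\ot\dbiunit$ gives $\sigma(\id_{\ol{H}}\ot\dbiunit)=\biunit_A\dbicounit$. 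Finally $(\ref{lem_cocycle_rels6})$: $\biunit_A\dbicounit=\sigma(\dbiunit\ot\id_{\ol{H}})$ is its own convolution inverse, while $\coninv{\sigma}(\dbiunit\ot\id_{\ol{H}})$ is also one by the above, so by uniqueness of convolution inverses $\coninv{\sigma}(\dbiunit\ot\id_{\ol{H}})=\biunit_A\dbicounit$, and $\coninv{\sigma}(\id_{\ol{H}}\ot\dbiunit)=\biunit_A\dbicounit$ similarly.

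The main obstacle is the bookkeeping paragraph together with the identification in $(\ref{lem_cocycle_rels1})$: pinning down the coalgebra structure on the $\ol{H}^{\ot n}$ coming from Remark~\ref{rem_tensor_alg_structure}, checking that $\dbimu,\dbiunit,\dbicounit$ in the various tensor slots are coalgebra morphisms, and then tracking the braidings $\dbrd$ and $\brd_{H,A}$ carefully enough to see that the two sides of $(\ref{defi_cocycle_rel1})$ coincide with the stated convolution products. Once this is in place the remaining steps are formal manipulations with the associative convolution product.
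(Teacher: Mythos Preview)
Your argument is correct and follows essentially the same route as the paper: identify $(\ref{lem_cocycle_rels1})$ with $(\ref{defi_cocycle_rel1})$, then obtain $(\ref{lem_cocycle_rels2})$ and $(\ref{lem_cocycle_rels3})$ by convolving with the appropriate inverses (the paper records exactly the same list of convolution inverses via Lemma~\ref{lem_conv_alg_morphism}). The only deviation is in the last two relations: the paper specializes $(\ref{lem_cocycle_rels2})$ and $(\ref{lem_cocycle_rels3})$ at $\dbiunit\ot\dbiunit$ to get $(\ref{lem_cocycle_rels6})$ first and then deduces $(\ref{lem_cocycle_rels5})$, whereas you specialize $(\ref{defi_cocycle_rel1})$ directly to obtain the idempotency $f\conv f=f$ and cancel, getting $(\ref{lem_cocycle_rels5})$ first and then $(\ref{lem_cocycle_rels6})$ --- an equivalent and equally short maneuver.
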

\begin{proof}
	$(\ref{lem_cocycle_rels1})$ is precisely $(\ref{defi_cocycle_rel1})$. Now Lemma \ref{lem_conv_alg_morphism} gives
	\begin{align*}
		\coninv{(\sigma (\dbimu \ot \id))} &= (\coninv{\sigma} (\dbimu \ot \id)) & \coninv{(\sigma (\id \ot \dbimu))} &= (\coninv{\sigma} (\id \ot \dbimu))
		\\ \coninv{(\sigma (\dbiunit \ot \id))} &= \coninv{\sigma} (\dbiunit \ot \id) & \coninv{(\sigma (\id \ot \dbiunit))} &= \coninv{\sigma} (\id \ot \dbiunit)
	\end{align*}
	and it is elementary to check that
	\begin{align*}
		\coninv{(\sigma \ot \dbicounit)} &= \coninv{\sigma} \ot \dbicounit & \coninv{( \mact_A (\id \ot \sigma))} &=  \mact_A (\id \ot \coninv{\sigma} )  \matdot
	\end{align*}
	Hence $(\ref{lem_cocycle_rels2})$ and $(\ref{lem_cocycle_rels3})$ can be deduced from $(\ref{lem_cocycle_rels1})$, by multiplying this relation with the specific convolution inverse morphisms. The first relation of $(\ref{lem_cocycle_rels6})$ can be followed by applying $ \dbiunit \ot \dbiunit \ot \id $ from the right to $(\ref{lem_cocycle_rels3})$, the second by applying $\id \ot \dbiunit \ot \dbiunit$ from the right to $(\ref{lem_cocycle_rels2})$. Finally $(\ref{lem_cocycle_rels5})$ is implied by $(\ref{lem_cocycle_rels6})$.
\end{proof}

\begin{lem} \label{lem_sigmul_eps}
	The following relation holds:
	\begin{align*}
		(\id \ot \dbicounit) \sigmul (\biunit_A \ot \id \ot \biunit_A \ot \id) = \sigma \matdot
	\end{align*}
	In particular, if $\pi : \ol{H} \ot \ol{H} \rightarrow A$ is another morphism in $\scr{D}$, such that $\sigmul = \mu_\pi$, then $\sigma=\pi$.
\end{lem}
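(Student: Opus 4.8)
The plan is to expand the left-hand side factor by factor through the three-fold composite defining $\sigmul$, using that $\brd^\nu_{H,A}$ and $\bimu_A$ collapse when fed a unit, until only $\hat{\sigma}$ remains, and then to invoke Remark~\ref{rema_def_cocycle_rels}. Two auxiliary facts are needed, both contained in that remark: that $\brd^\nu_{H,A}(\id_{\ol H} \ot \biunit_A) = \biunit_A \ot \id_{\ol H}$, and that $(\id_A \ot \dbicounit)\hat{\sigma} = \sigma$. If one prefers to re-derive the first from scratch, unwinding the definition of $\brd^\nu_{H,A}$ one uses axiom (1) in the definition of a left braiding together with $\brd_{H,\dcatunit} = \id_{\ol H}$ to get $\brd_{H,A}(\id_{\ol H}\ot\biunit_A) = \biunit_A \ot \id_{\ol H}$, and then the measuring axiom $\mact_A(\id_{\ol H}\ot\biunit_A) = \biunit_A\dbicounit$ together with the counit axiom of $\ol H$; the second follows from $\dbicounit$ being an algebra morphism (bialgebra axiom) and the counit axioms.

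Next I would compose $\sigmul$ with $\biunit_A \ot \id \ot \biunit_A \ot \id$ and track the tensor factors. In $\id \ot \brd^\nu_{H,A} \ot \id$ the inner $\brd^\nu_{H,A}$ receives precisely $\id_{\ol H}\ot\biunit_A$ in its two slots, so by the first auxiliary fact the composite up to this point equals $\biunit_A \ot \biunit_A \ot \id_{\ol H} \ot \id_{\ol H} : \ol H \ot \ol H \to A\ot A\ot\ol H\ot\ol H$. Applying $\bimu_A \ot \hat{\sigma}$ and then $\bimu_A \ot \id$, and using the unit axiom $\bimu_A(\biunit_A \ot -) = \id_A$ twice, the two $A$-factors telescope away, leaving $\sigmul(\biunit_A \ot \id \ot \biunit_A \ot \id) = \hat{\sigma}$. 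Postcomposing with $\id_A \ot \dbicounit$ and using the second auxiliary fact yields $\sigma$, which is the asserted identity.

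The ``in particular'' statement is then immediate: if $\sigmul = \mu_\pi$, applying the identity just proved to both sides gives $\sigma = (\id \ot \dbicounit)\sigmul(\biunit_A \ot \id \ot \biunit_A \ot \id) = (\id \ot \dbicounit)\mu_\pi(\biunit_A \ot \id \ot \biunit_A \ot \id) = \pi$. There is no genuine obstacle here; the only point demanding care is the bookkeeping of which tensor slot each morphism acts on inside the composite defining $\sigmul$, so that the unit $\biunit_A$ is confirmed to land in the slot read by $\brd^\nu_{H,A}$, after which everything collapses cleanly.
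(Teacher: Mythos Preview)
Your proposal is correct and follows essentially the same route as the paper: both arguments use Remark~\ref{rema_def_cocycle_rels} to collapse $\brd^\nu_{H,A}$ against the unit, obtain $\sigmul(\biunit_A \ot \id \ot \biunit_A \ot \id) = \hat{\sigma}$, and then apply $\id_A \ot \dbicounit$. Your write-up is slightly more detailed in spelling out why the auxiliary facts hold and in handling the ``in particular'' clause, but the underlying argument is the same.
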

\begin{proof}
	Using Remark \ref{rema_def_cocycle_rels}, the calculation is straight forward:
	\begin{align*}
		& \sigmul (\biunit_A \ot \id \ot \biunit_A \ot \id) \\ = &(\bimu_A \ot \id) (\bimu_A \ot \hat{\sigma}) (\id \ot \brd^\nu_{H,A} \ot \id) (\biunit_A \ot \id \ot \biunit_A \ot \id)
		\\ =& (\bimu_A \ot \id) (\bimu_A \ot \hat{\sigma})  (\biunit_A \ot  \biunit_A \ot \id \ot  \id)
		 = \hat{\sigma}
	 	\matdot
	\end{align*}
	Now applying $\id \ot \dbicounit$ from the left gives $\sigma$.
\end{proof}

\begin{lem} \label{lem_sigmul_assoc}
	If $\sigma : \ol{H} \ot \ol{H} \rightarrow A$ is a cocycle, then the following relations hold:
	\begin{align}
		\brd^\mact_{H,A} (\id_{\ol{H}} \ot \bimu_A) =& (\bimu_A \ot \id_{\ol{H}}) (\id_A \ot \brd^\mact_{H,A}) (\brd^\mact_{H,A} \ot \id_A) \matcom \label{lem_sigmul_assoc1} \\
		\sigmul (\hat{\sigma} \ot \id_A \ot \id_{\ol{H}}) =& \sigmul (\brd^\mact_{H,A} \ot \hat{\sigma}) (\id_{\ol{H}} \ot \brd^\mact_{H,A} \ot \id_{\ol{H}}) \label{lem_sigmul_assoc2} \matdot
	\end{align}
\end{lem}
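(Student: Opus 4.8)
The plan is to prove the two displayed relations separately. For \eqref{lem_sigmul_assoc1}, I would unfold the definition $\brd^\mact_{H,A} = (\mact_A \ot \id)(\id \ot \brd_{H,A})(\dbicomu \ot \id)$ on both sides. The left-hand side becomes $(\mact_A \ot \id)(\id \ot \brd_{H,A})(\dbicomu \ot \bimu_A)$; using coassociativity of $\dbicomu$ and naturality of the braiding $\brd$ (axiom (3), which lets $\brd_{H, A \ot A}$ be built from $\brd_{H,A}$ twice), one rewrites $(\id \ot \brd_{H,A})(\id \ot \id \ot \bimu_A)$ against the comultiplication. The key input is the measuring axiom \eqref{defi_meas3p}, i.e.\ $\mact_A(\id_{\ol{H}} \ot \bimu_A) = \bimu_A(\id_A \ot \mact_A)(\brd^\nu_{H,A} \ot \id_A)$, but note this uses $\brd^\nu$, not $\brd^\mact$ — so I will first need the relation between them, or more simply use \eqref{defi_meas3} directly: $\mact_A(\id \ot \bimu_A) = \bimu_A(\mact_A \ot \mact_A)(\id \ot \brd_{H,A} \ot \id)(\dbicomu \ot \id^2)$. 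Feeding this in and reassociating $\bimu_A$, the right-hand side $(\bimu_A \ot \id)(\id \ot \brd^\mact_{H,A})(\brd^\mact_{H,A} \ot \id)$ should emerge after one more application of coassociativity and braiding axiom (3). This is essentially the statement that $\brd^\mact_{H,A}$ is a "half-braiding up to the measuring being associative", and since a measuring need not be associative, the point is that \eqref{lem_sigmul_assoc1} holds regardless — so this part should be a direct, if slightly lengthy, diagram chase that does \emph{not} use the cocycle hypothesis at all. I would double-check whether the cocycle hypothesis is genuinely unneeded here; if \eqref{defi_meas3} alone suffices, I would remark on that.

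For \eqref{lem_sigmul_assoc2}, I would expand $\sigmul = (\bimu_A \ot \id)(\bimu_A \ot \hat\sigma)(\id \ot \brd^\nu_{H,A} \ot \id)$ on both sides. The combinatorics here are heavier: on the left we precompose with $\hat\sigma \ot \id_A \ot \id_{\ol{H}}$, on the right with $(\brd^\mact_{H,A} \ot \hat\sigma)(\id \ot \brd^\mact_{H,A} \ot \id)$. The strategy is to push everything into the "coalgebra part" $\ol{H}$ and the "algebra part" $A$ and match. On the $A$-side, after collecting the $\bimu_A$'s, the content should reduce to the cocycle relations \eqref{defi_cocycle_rel1} and \eqref{defi_cocycle_rel2} from Definition \ref{def_cocycle} — indeed \eqref{defi_cocycle_rel1} governs how two $\sigma$'s interact through a multiplication and \eqref{defi_cocycle_rel2} governs how $\sigma$ interacts with $\mact_A$, and associativity of $\sigmul$ (which \eqref{lem_sigmul_assoc2} is the essential ingredient for) is classically exactly the 2-cocycle condition. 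On the $\ol{H}$-side one uses coassociativity of $\dbicomu$ together with the bialgebra compatibility (so that $\dbicomu$ is an algebra map, needed to move $\dbimu$ past $\dbicomu$) and the braiding axioms (1)--(3) to reconcile the two bracketings. I expect to also need \eqref{lem_sigmul_assoc1}, just proved, to handle a subterm where $\brd^\mact$ meets $\bimu_A$.

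The main obstacle will be the bookkeeping in \eqref{lem_sigmul_assoc2}: there are three tensor factors of $\ol{H}$ and two of $A$ floating around, and $\hat\sigma$ itself already couples $\sigma$ (landing in $A$) with $\dbimu$ (landing in $\ol{H}$) through $\bicomu_{\ol{H} \ot \ol{H}}$, so each $\hat\sigma$ secretly carries a comultiplication. Tracking which copy of $\ol{H}$ gets comultiplied, braided past which copy of $A$, and fed into which $\sigma$ or $\mact_A$ is where an error is easy to make. My plan to control this is to first establish a normal form for $\sigmul(\hat\sigma \ot \id \ot \id)$ — expand all comultiplications via $\bicomu^n$ and collect the braidings to the middle using axioms (2) and (3) — and then show the right-hand side reduces to the same normal form, invoking \eqref{defi_cocycle_rel1}, \eqref{defi_cocycle_rel2} and \eqref{lem_sigmul_assoc1} at the marked spots. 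I would present this as a sequence of labeled equalities rather than one monolithic computation, flagging at each step which axiom or earlier lemma is used.
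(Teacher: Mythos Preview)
Your plan is correct and uses the same ingredients as the paper's proof. A small notational point: in the paper $\brd^\nu_{H,A}$ and $\brd^\mact_{H,A}$ are literally the same morphism (the superscript in both cases is $\nu = \mact_A$), so your worry about relating them is unfounded. You are also right that \eqref{lem_sigmul_assoc1} needs only the measuring axiom \eqref{defi_meas3}, the coassociativity of $\dbicomu$, and the braiding axioms, and not the cocycle hypothesis.

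For \eqref{lem_sigmul_assoc2} the paper organises the computation slightly differently from your normal-form plan: rather than fully expanding $\sigmul$ on both sides, it first lifts the two cocycle relations \eqref{defi_cocycle_rel1} and \eqref{defi_cocycle_rel2} to ``hatted'' versions living in $A \ot \ol{H}$, namely
\[
(\bimu_A \ot \id)(\id \ot \hat{\sigma})(\hat{\sigma} \ot \id_{\ol{H}}) = (\bimu_A \ot \id)(\id \ot \hat{\sigma})(\brd^\mact_{H,A} \ot \id)(\id_{\ol{H}} \ot \hat{\sigma})
\]
from \eqref{defi_cocycle_rel1} together with the bialgebra axiom $\dbicomu\,\dbimu = (\dbimu \ot \dbimu)\bicomu_{\ol{H}\ot\ol{H}}$ and coassociativity, and
\[
(\bimu_A \ot \id)(\id \ot \brd^\mact_{H,A})(\hat{\sigma} \ot \id_A) = (\bimu_A \ot \id)(\id \ot \hat{\sigma})(\brd^\mact_{H,A} \ot \id)(\id_{\ol{H}} \ot \brd^\mact_{H,A})
\]
from \eqref{defi_cocycle_rel2}. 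These two intermediate identities then combine directly (using only associativity of $\bimu_A$) to give \eqref{lem_sigmul_assoc2}, and \eqref{lem_sigmul_assoc1} is in fact not invoked. This decomposition keeps the bookkeeping lighter than a full expand-and-match, since each intermediate identity isolates exactly one cocycle axiom; your approach will also work but will require more care in the step you flagged as the main obstacle.
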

\begin{proof}
	It might be easier to follow the steps of this proof by using a notation similar to the one described in \cite{MR2948489}. $(\ref{lem_sigmul_assoc1})$ follows quickly from $(\ref{defi_meas3})$, the braid axioms and coassociativity of $\dbicomu$. 
	
	Now (\ref{defi_cocycle_rel1}), the bialgebra axiom $ \dbicomu \dbimu = (\dbimu \ot \dbimu) \bicomu_{\ol{H} \ot \ol{H}} $ as well as the coassociativity of $\bicomu_{\ol{H} \ot \ol{H}}$ imply
	\begin{align*}
		&(\bimu_A \ot \id)(\id \ot \hat{\sigma}) (\hat{\sigma} \ot \id_{\ol{H}}) 
		\\=& \left( \left( \bimu_A  (\id \ot \sigma ) (\hat{\sigma} \ot \id) \right) \ot \dbimu^2 \right) (\id^2 \ot \brd_{H \ot H,\ol{H}} \ot \id) (\bicomu_{\ol{H} \ot \ol{H}} \ot \dbicomu)
		\\=& \left( \left( \bimu_A (\id \ot \sigma)(\brd^\mact_{H,A} \ot \id)(\id_{\ol{H}} \ot \hat{\sigma}) \right) \ot \dbimu^2 \right)  (\id \ot \brd_{H,\ol{H\ot H}} \ot \id^2) \\& (\dbicomu \ot \bicomu_{\ol{H} \ot \ol{H}})
		\\=& (\bimu_A \ot \id)(\id \ot \hat{\sigma})(\brd^\mact_{H,A} \ot \id)(\id_{\ol{H}} \ot \hat{\sigma}) \matdot
	\end{align*}
	Similarly (\ref{defi_cocycle_rel2}) implies
	\begin{align*}
		 &(\bimu_A \ot \id)(\id \ot \brd^\mact_{H,A})(\hat{\sigma} \ot \id_A) 
		 \\ =& \left( \left( \bimu_A (\id \ot \sigma)(\brd^\mact_{H,A} \ot \id)(\id_{\ol{H}} \otimes \brd^\mact_{H,A}) \right) \ot \dbimu \right) \\& (\id^2 \ot \brd_{H\ot H, \ol{H}} \ot \id) (\bicomu_{\ol{H} \ot \ol{H}} \ot \dbicomu)
		 \\ =& (\bimu_A \ot \id) (\id \ot \hat{\sigma})(\brd^\mact_{H,A} \ot \id)(\id_{\ol{H}} \otimes \brd^\mact_{H,A}) \matcom
	\end{align*}
	Combining these yields
	\begin{align*}
		&\sigmul (\brd^\mact_{H,A} \ot \hat{\sigma}) (\id_{\ol{H}} \ot \brd^\mact_{H,A} \ot \id_{\ol{H}})
		\\=& (\bimu_A^2 \ot \id) (\id^2 \ot \hat{\sigma})(\id \ot \hat{\sigma} \ot \id) (\brd^\mact_{H,A} \ot \id^2)  (\id \ot \brd^\mact_{H,A} \ot \id )
		\\=& \sigmul (\hat{\sigma} \ot \id_A \ot \id_{\ol{H}})
		\matcom
	\end{align*}
	using also the associativity of $A$.
\end{proof}

\begin{prop} \label{prop_sigmul_assoc}
	Assume that $\sigma : \ol{H} \ot \ol{H} \rightarrow A$ is convolution invertible. Then $\sigma$ is a cocycle if and only if $\sigmul$ is an associative multiplication with unit $\biunit_A \ot \dbiunit$.
\end{prop}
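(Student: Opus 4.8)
The plan is to establish both implications by relating the defining conditions of a cocycle for $\sigma$---convolution invertibility together with $(\ref{defi_cocycle_rel1})$, $(\ref{defi_cocycle_rel2})$ and $(\ref{defi_cocycle_rel3})$---to unitality and associativity of $\sigmul$. Convolution invertibility is assumed on both sides, so it never really enters the argument; one only has to pass between the three relations and the statements that $\biunit_A \ot \dbiunit$ is a two-sided unit for $\sigmul$ and that $\sigmul$ is associative. The basic device, already used in Lemma~\ref{lem_sigmul_eps}, is to insert $\biunit_A : \dcatunit \to A$ into some $A$-legs and $\dbiunit$ into some $\ol{H}$-legs of an identity about $\sigmul$ and then post-compose with $\id_A \ot \dbicounit$; this translates statements about $\sigmul$ on $(A \ot \ol{H})^{\ot n}$ into statements about $\sigma$ on $\ol{H}^{\ot n}$, with the translation controlled by Remark~\ref{rema_def_cocycle_rels} (which supplies $(\id_A \ot \dbicounit)\hat{\sigma} = \sigma$, $(\id_A \ot \dbicounit)\brd^\mact_{H,A} = \mact_A$ and $\brd^\mact_{H,A}(\id_{\ol{H}} \ot \biunit_A) = \biunit_A \ot \id_{\ol{H}}$) together with the companion identity $\brd^\mact_{H,A}(\dbiunit \ot \id_A) = \id_A \ot \dbiunit$, which follows at once from $\mact_A(\dbiunit \ot \id_A) = \id_A$, naturality of $\brd$, and $\brd_{\catunit, A} = \id_A$.

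\emph{Cocycle $\Rightarrow$ unital and associative.} For unitality, precompose $\sigmul$ with $(\biunit_A \ot \dbiunit) \ot \id_{A \ot \ol{H}}$, respectively $\id_{A \ot \ol{H}} \ot (\biunit_A \ot \dbiunit)$, and simplify using the two identities for $\brd^\mact_{H,A}$ above, the unit axioms of $A$ and of $\dbimu$, the counit axioms of $\dbicomu$, and $\sigma(\dbiunit \ot \id_{\ol{H}}) = \biunit_A \dbicounit = \sigma(\id_{\ol{H}} \ot \dbiunit)$, which is $(\ref{lem_cocycle_rels5})$ of Lemma~\ref{lem_cocycle_rels} (the only place convolution invertibility is used); in both cases the result is $\id_{A \ot \ol{H}}$. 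For associativity, expand $\sigmul(\sigmul \ot \id_{A \ot \ol{H}})$ and $\sigmul(\id_{A \ot \ol{H}} \ot \sigmul)$ using $\sigmul = (\bimu_A \ot \id_{\ol{H}})(\bimu_A \ot \hat{\sigma})(\id_A \ot \brd^\mact_{H,A} \ot \id_{\ol{H}})$, push all $A$-valued factors to the left by associativity of $\bimu_A$, move $\brd^\mact_{H,A}$ past multiplications in $A$ by means of the measuring relation $(\ref{defi_meas3p})$ and its iterate $(\ref{lem_sigmul_assoc1})$, and line up the various copies of $\dbicomu$ using its coassociativity together with the braid axioms; the two composites are then matched by the relation $(\ref{lem_sigmul_assoc2})$ of Lemma~\ref{lem_sigmul_assoc}, into which the two cocycle conditions $(\ref{defi_cocycle_rel1})$ and $(\ref{defi_cocycle_rel2})$ have already been absorbed. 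This last step is long and error-prone---it is the main obstacle---but it is purely organizational, since Lemma~\ref{lem_sigmul_assoc} has done the delicate part; it is best carried out in a graphical calculus as in \cite{MR2948489}.

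\emph{Unital and associative $\Rightarrow$ cocycle.} Assume $\biunit_A \ot \dbiunit$ is a two-sided unit and $\sigmul$ is associative. Since $\biunit_A \ot \dbiunit$ is a unit, $\sigmul$ applied to it tensored with itself equals $\biunit_A \ot \dbiunit$; computing the left-hand side with $\brd^\mact_{H,A}(\dbiunit \ot \id_A) = \id_A \ot \dbiunit$, $\dbicomu\dbiunit = \dbiunit \ot \dbiunit$, $\dbimu(\dbiunit \ot \dbiunit) = \dbiunit$ and the unit axioms of $A$ gives $\sigma(\dbiunit \ot \dbiunit) \ot \dbiunit = \biunit_A \ot \dbiunit$, whence $(\ref{defi_cocycle_rel3})$ follows by applying $\id_A \ot \dbicounit$ and $\dbicounit\dbiunit = \id_{\dcatunit}$. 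Next, precomposing the left, respectively right, unit law for $\sigmul$ with $\biunit_A$ in its single remaining $A$-leg and then applying $\id_A \ot \dbicounit$ yields $\sigma(\dbiunit \ot \id_{\ol{H}}) = \biunit_A \dbicounit$, respectively $\sigma(\id_{\ol{H}} \ot \dbiunit) = \biunit_A \dbicounit$. Finally, recall from the proof of Lemma~\ref{lem_sigmul_eps} that $\sigmul(\biunit_A \ot \id_{\ol{H}} \ot \biunit_A \ot \id_{\ol{H}}) = \hat{\sigma}$. Precomposing the associativity identity $\sigmul(\sigmul \ot \id_{A \ot \ol{H}}) = \sigmul(\id_{A \ot \ol{H}} \ot \sigmul)$ with $(\biunit_A \ot \id_{\ol{H}})^{\ot 3}$ and postcomposing with $\id_A \ot \dbicounit$, both sides collapse---using $\brd^\mact_{H,A}(\id_{\ol{H}} \ot \biunit_A) = \biunit_A \ot \id_{\ol{H}}$, $(\id_A \ot \dbicounit)\hat{\sigma} = \sigma$, and the unit and counit axioms---precisely to the two sides of $(\ref{defi_cocycle_rel1})$. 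Precomposing instead with $(\biunit_A \ot \id_{\ol{H}}) \ot (\biunit_A \ot \id_{\ol{H}}) \ot (\id_A \ot \dbiunit)$ and postcomposing with $\id_A \ot \dbicounit$, both sides collapse---using additionally $\sigma(\id_{\ol{H}} \ot \dbiunit) = \biunit_A \dbicounit$ from the previous step and $(\id_A \ot \dbicounit)\brd^\mact_{H,A} = \mact_A$---to the two sides of $(\ref{defi_cocycle_rel2})$. Since $\sigma$ is convolution invertible by hypothesis, it is a cocycle, completing the argument.
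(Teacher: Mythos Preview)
Your proposal is correct and follows essentially the same route as the paper's proof: the forward direction goes through Lemma~\ref{lem_sigmul_assoc} (relations $(\ref{lem_sigmul_assoc1})$ and $(\ref{lem_sigmul_assoc2})$) for associativity and through $(\ref{lem_cocycle_rels5})$ of Lemma~\ref{lem_cocycle_rels} for unitality, while the backward direction recovers $(\ref{defi_cocycle_rel1})$--$(\ref{defi_cocycle_rel3})$ by precomposing the unit and associativity laws of $\sigmul$ with the specific insertions $\biunit_A$, $\dbiunit$ and postcomposing with $\id_A \ot \dbicounit$, exactly as in the paper. Your explicit intermediate step deriving $\sigma(\id_{\ol{H}} \ot \dbiunit) = \biunit_A \dbicounit$ from the right unit law before using it in the verification of $(\ref{defi_cocycle_rel2})$ is a point the paper leaves implicit, so your write-up is if anything slightly more careful there.
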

\begin{proof}
	Assume that $\sigma$ is a cocycle. To show the associativity of $\sigmul$ we use $(\ref{lem_sigmul_assoc1})$ two times in the first equality and $(\ref{lem_sigmul_assoc2})$ in the second equality:
	\begin{align*}
		& \sigmul ( \id \ot \id \ot \sigmul) 
		\\=& (\bimu_A^2 \ot \id) \left( \id^2 \ot \left(  \sigmul (\brd^\mact_{H,A} \ot \hat{\sigma}) (\id_{\ol{H}} \ot \brd^\mact_{H,A} \ot \id_{\ol{H}}) \right) \right)
		\\& \left( \id \ot \brd^\mact_{H,A} \ot \id \ot \id \ot \id \right)
		\\=& (\bimu_A^2 \ot \id) \left( \id^2 \ot \left( \sigmul (\hat{\sigma} \ot \id_A \ot \id_{\ol{H}}) \right) \right)   \left( \id \ot \brd^\mact_{H,A} \ot \id \ot \id \ot \id \right)
		\\ =& \sigmul (\sigmul \ot \id \ot \id) 
		\matdot
	\end{align*}
	Now $(\ref{lem_cocycle_rels5})$ implies $\hat{\sigma} ( \id \ot \dbiunit) = \biunit_A \ot \id$. Thus using Remark \ref{rema_def_cocycle_rels} we obtain
	\begin{align*}
		&\sigmul (\id \ot \id  \ot \biunit_A \ot \dbiunit) 
		\\=& (\bimu_A \ot \id) (\bimu_A \ot \hat{\sigma}) (\id \ot \brd^\nu_{H,A} \ot \id) (\id \ot \id  \ot \biunit_A \ot \dbiunit) 
		\\=& (\bimu_A \ot \id) (\bimu_A \ot \hat{\sigma}) (\id \ot \biunit_A  \ot \id \ot \dbiunit) = \id \ot \id
	\end{align*}
	and similarly $\sigmul (\biunit_A \ot \dbiunit \ot \id \ot \id) = \id \ot \id$. Hence $\biunit_A \ot \dbiunit$ is the unit of $\sigmul$.
	
	Conversely assume that $\sigmul$ is an associative multiplication with unit $\biunit_A \ot \dbiunit$. We have
	\begin{align*}
		& \bimu_A( \id \ot \sigma)(\hat{\sigma} \ot \id_{\ol{H}})
		\\=& (\id \ot \dbicounit) \sigmul( \sigmul \ot \id \ot \id) (\biunit_A \ot \id \ot \biunit_A \ot \id \ot \biunit_A \ot \id)
		\\=& (\id \ot \dbicounit) \sigmul( \id \ot \id \ot \sigmul) (\biunit_A \ot \id \ot \biunit_A \ot \id \ot \biunit_A \ot \id)
		\\=& \bimu_A (\id \ot \sigma)(\brd^\mact_{H,A} \ot \id)(\id_{\ol{H}} \ot \hat{\sigma})
		\matcom
	\end{align*}
	and similarly
	\begin{align*}
		& \bimu_A(\id \ot \mact_A)(\hat{\sigma} \ot \id_A)
		\\=& (\id \ot \dbicounit) \sigmul( \sigmul \ot \id \ot \id) (\biunit_A \ot \id \ot \biunit_A \ot \id \ot \id \ot \dbiunit)
		\\=& (\id \ot \dbicounit) \sigmul( \id \ot \id \ot \sigmul) (\biunit_A \ot \id \ot \biunit_A \ot \id \ot \id \ot \dbiunit)
		\\=& \bimu_A (\id \ot \sigma)(\brd^\mact_{H,A} \ot \id)(\id_{\ol{H}} \otimes \brd^\mact_{H,A})
		\matdot
	\end{align*}
	Using Lemma \ref{lem_sigmul_eps} we obtain
	\begin{align*}
		\sigma (\dbiunit \ot \dbiunit) = (\id \ot \dbicounit) \sigmul (\biunit_A \ot \dbiunit \ot \biunit_A \ot \dbiunit)
		= (\id \ot \dbicounit) (\biunit_A \ot \dbiunit) = \biunit_A  \matcom
	\end{align*}
	hence $\sigma$ is a cocycle.
\end{proof}

\begin{defi}
	Let $\sigma : \ol{H} \ot \ol{H} \rightarrow A$ be a cocycle. By the previous proposition $A\ot \ol{H}$ becomes an algebra in $\scr{D}$ with multiplication $\sigmul$ and unit $\biunit_A \ot \dbiunit$. This algebra structure is called the \textbf{crossed product of $\sigma$}. We denote it with $\sigsmash{A}$.
\end{defi}

\begin{prop}
	Let $\sigma : \ol{H} \ot \ol{H} \rightarrow A$ be a cocycle. Then the algebra $\sigsmash{A}$ is an $\ol{H}$-comodule algebra with coaction $\id_A \ot \dbicomu: \sigsmash{A} \rightarrow \sigsmash{A} \ot \ol{H}$.
\end{prop}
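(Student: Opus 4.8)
The plan is to verify the two requirements in the definition of an $\ol{H}$-comodule algebra: that $\coact := \id_A \ot \dbicomu$ makes the object $A \ot \ol{H}$ a right $\ol{H}$-comodule, and that $\coact : \sigsmash{A} \to \sigsmash{A} \ot \ol{H}$ is an algebra morphism, where the target carries the algebra structure of Remark~\ref{rem_tensor_alg_structure} (available since $\sigsmash{A}$ is an algebra in $\scr{D}$ by Proposition~\ref{prop_sigmul_assoc} and $H$ is an algebra in $\scr{C}$). By the "equivalently" in that definition the second requirement amounts to showing the multiplication $\sigmul$ and the unit $\biunit_A \ot \dbiunit$ of $\sigsmash{A}$ are morphisms of right $\ol{H}$-comodules, tensor products of comodules being given the structure of Remark~\ref{rem_tensor_mod_structure}. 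The comodule axioms and the case of the unit are immediate: $\ol{H}$ is a coalgebra in $\scr{D}$ because $\funCD$ is strict monoidal, so coassociativity and counitality of $\coact$ reduce to those of $\dbicomu$, while $\coact(\biunit_A \ot \dbiunit) = \biunit_A \ot \dbicomu\,\dbiunit = \biunit_A \ot (\dbiunit \ot \dbiunit)$ since $\dbiunit$ is a coalgebra morphism, and this is exactly the unit of $\sigsmash{A} \ot \ol{H}$.

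The substance of the proof is that $\sigmul$ is a comodule morphism, i.e.
\begin{align*}
	(\id_A \ot \dbicomu)\,\sigmul = (\sigmul \ot \id_{\ol{H}})(\id \ot \id \ot \dbimu)(\id_{\sigsmash{A}} \ot \brd_{H,\sigsmash{A}} \ot \id_{\ol{H}})(\coact \ot \coact),
\end{align*}
where $\brd_{H,\sigsmash{A}} = (\id_A \ot \brd_{H,\ol{H}})(\brd_{H,A} \ot \id_{\ol{H}})$ by axiom~(3) of the left braiding. I would prove this by unfolding both sides via the definitions of $\sigmul$, $\brd^\nu_{H,A}$, $\hat{\sigma}$ and $\bicomu_{\ol{H}\ot\ol{H}}$. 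The guiding observation is that $\sigma$, $\mact_A$ and $\bimu_A$ enter $\sigmul$ only through the first, $A$-valued, tensorand of its output, which $\id_A \ot \dbicomu$ leaves untouched; hence on both sides this tensorand carries literally the same expression, and all the content is in the second, $\ol{H}$-valued, tensorand. That tensorand is $\dbimu$ applied to suitably comultiplied pieces of the two incoming copies of $\ol{H}$, so applying $\dbicomu$ to it and invoking the bialgebra identity $\dbicomu\,\dbimu = (\dbimu \ot \dbimu)\bicomu_{\ol{H}\ot\ol{H}}$ (i.e.\ that $\dbimu$ is a coalgebra morphism), together with coassociativity of $\dbicomu$, turns the left-hand side into the right-hand side, on which the extra $\ol{H}$-factor is split off before $\sigmul$ is formed. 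Equivalently, one can present $\sigmul = (\bimu_A \ot \id_{\ol{H}})(\bimu_A \ot \hat{\sigma})(\id_A \ot \brd^\nu_{H,A} \ot \id_{\ol{H}})$ as a composite of comodule morphisms, the crucial one being that $\hat{\sigma} : \ol{H} \ot \ol{H} \to \sigsmash{A}$ is a comodule morphism out of the tensor product of two copies of the right $\ol{H}$-comodule $(\ol{H},\dbicomu)$, which is again this bialgebra identity plus coassociativity.

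The main obstacle is the bookkeeping of the left braiding: on the two sides the various instances of $\brd_{H,A}$ and $\brd_{H,\ol{H}}$ — packaged into $\brd^\nu_{H,A}$ inside $\sigmul$, into $\bicomu_{\ol{H}\ot\ol{H}}$ inside $\hat{\sigma}$, and into $\brd_{H,\sigsmash{A}}$ inside the coaction on $\sigsmash{A} \ot \sigsmash{A}$ — are applied to different groupings of the comultiplied tensor factors, and one must check that these agree. This is precisely what the naturality axiom~(1) and the hexagon axioms~(2) and~(3) supply, using essentially that $\bimu$, $\bicomu$, $\biunit$, $\bicounit$ are morphisms in $\scr{C}$ so they can be moved through $\brd$ from either side; the computation is most transparent in a graphical calculus as in \cite{MR2948489}. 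I note that, beyond the hypotheses already used to form the algebra $\sigsmash{A}$, only the bialgebra and left-braiding axioms enter here — the cocycle relations $(\ref{defi_cocycle_rel1})$--$(\ref{defi_cocycle_rel3})$ are not needed.
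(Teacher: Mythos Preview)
Your proposal is correct and follows essentially the same approach as the paper: the paper verifies $(\sigmul \ot \id_{\ol{H}})\coact_{\sigsmash{A}\ot\sigsmash{A}} = (\id_A \ot \dbicomu)\sigmul$ by writing both sides as the common expression $(\bimu_A \ot \id \ot \id)(\bimu_A \ot \sigma \ot \dbimu \ot \dbimu)(\id \ot \mact_A \ot \bicomu_{\ol{H}\ot\ol{H}}^2)(\id^2 \ot \brd_{H,A} \ot \id)(\id \ot \dbicomu \ot \id^2)$, which is precisely your ``guiding observation'' that the $A$-component is untouched and the $\ol{H}$-component reduces to the bialgebra identity plus coassociativity. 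Your alternative phrasing via $\hat{\sigma}$ being a comodule morphism and your remark that the cocycle axioms~(\ref{defi_cocycle_rel1})--(\ref{defi_cocycle_rel3}) are not used again here are both accurate and make the structure of the computation clearer than the paper's compressed one-line display.
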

\begin{proof}
	We have
	\begin{align*}
	&(\sigmul \ot \id_{\ol{H}}) \coact_{\sigsmash{A} \ot \sigsmash{A}} \\=& (\bimu_A \ot \id \ot \id)(\bimu_A \ot \sigma \ot \dbimu \ot \dbimu) (\id \ot \mact_A \ot \bicomu_{\ol{H} \ot \ol{H}}^2)  \\& (\id \ot \id \ot \brd_{H,A} \ot \id)(\id \ot \dbicomu \ot \id \ot \id)
	= (\id_A \ot \dbicomu) \sigmul 
	\end{align*}
	and trivially $(\id_A \ot \dbicomu) (\biunit_A \ot \dbiunit)  =  (\biunit_A \ot \dbiunit \ot \id_{\ol{H}}) \dbiunit$.
\end{proof}

\section{Cleft extensions}

Let $H \in \scr{C}$ be a Hopf algebra with multiplication $\bimu$, unit $\biunit$, comultiplication $\bicomu$, counit $\bicounit$ and with antipode $\antip$.

\begin{rema}
	The calculations throughout this section might be easier to follow by using a notation similar to the one described in \cite{MR2948489}.
\end{rema}

\begin{defi} \label{def_cleft}
	An $\ol{H}$-comodule algebra $B \in \scr{D}$ is called an \textbf{$\ol{H}$-cleft extension}, if there exists a convolution invertible morphism of comodules $\sect : \ol{H} \rightarrow B$. Such $\sect$ is called a \textbf{section}. $B$ is called an \textbf{$\ol{H}$-cleft object}, if $\coinv{B} \cong \dcatunit$, i.e. if
	\begin{align*}
	\begin{tikzcd}[ampersand replacement=\&]
	\dcatunit \arrow{r}{\biunit_B} \& B \arrow[shift left]{r}{\coact_B} \arrow[', shift right]{r}{\id \ot \dbiunit} \& B \ot \ol{H}
	\end{tikzcd} \matdot 
	\end{align*}
	is an equalizer diagram. 
	
	A morphism of two cleft extensions $B,B'$ is an $\ol{H}$-comodule algebra morphism $\alpha: B \rightarrow B'$. 
\end{defi}

\begin{rema} \label{rema_cleft_ext_morphism_unit}
	Let $B \in \scr{D}$ be an $\ol{H}$-cleft extension with section $\sect$. Then 
	\begin{align*}
		\sect' := \bimu_B (\coninv{\sect} \ot \sect) (\dbiunit \ot \id_{\ol{H}})
	\end{align*}
	 is a morphism of comodules with $\sect' \dbiunit = \biunit_B$ that is convolution invertible with inverse $\bimu_B (\coninv{\sect} \ot \sect) (\id_{\ol{H}} \ot \dbiunit)$.
	Hence we can without restriction assume that $\sect \dbiunit = \biunit_B$. We also obtain $\coninv{\sect} \dbiunit = \coninv{(\sect \dbiunit)} = \biunit_B$.
\end{rema}

\begin{prop} \label{prop_cocycle_yields_cleft_obj}
	Let $A \in \scr{D}$ be a measuring on $\ol{H}$ and $\sigma : \ol{H} \ot \ol{H} \rightarrow A$ be a cocycle. Then the $\ol{H}$-comodule algebra $\sigsmash{A}$ is an $\ol{H}$-cleft extension with section $\biunit_A \ot \id$ and coinvariants $A$. In particular if $A = \dcatunit$, then $\sigsmash{\dcatunit}$ is an $\ol{H}$-cleft object.
\end{prop}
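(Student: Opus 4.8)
Throughout write $\theta := \id_A \ot \dbiunit : A \to \sigsmash{A}$ and $\sect := \biunit_A \ot \id_{\ol{H}} : \ol{H} \to \sigsmash{A}$, and recall from the previous proposition that $\sigsmash{A}$ is an $\ol{H}$-comodule algebra with coaction $\coact_{\sigsmash{A}} = \id_A \ot \dbicomu$. First, $\sect$ is a morphism of $\ol{H}$-comodules, since $\coact_{\sigsmash{A}} \sect = (\id_A \ot \dbicomu)(\biunit_A \ot \id_{\ol{H}}) = \biunit_A \ot \dbicomu = (\sect \ot \id_{\ol{H}})\dbicomu$. Granting for the moment that $\sect$ is convolution invertible, $\sigsmash{A}$ is then an $\ol{H}$-cleft extension with section $\sect = \biunit_A \ot \id_{\ol{H}}$.

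Next I would identify the coinvariants, showing that $\theta$ realises $A$ as $\coinv{\sigsmash{A}}$. It equalises $\coact_{\sigsmash{A}}$ and $\id_{\sigsmash{A}} \ot \dbiunit$ because $\dbicomu\dbiunit = \dbiunit \ot \dbiunit$. For the universal property, if $f$ equalises these two morphisms, composing the equalising identity with $\id_A \ot \dbicounit \ot \id_{\ol{H}}$ and using $(\dbicounit \ot \id_{\ol{H}})\dbicomu = \id_{\ol{H}}$ gives $f = \theta \circ \big((\id_A \ot \dbicounit)f\big)$, and this factorisation is unique since $\id_A \ot \dbicounit$ is a retraction of $\theta$. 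Hence $\coinv{\sigsmash{A}} = A$ with equaliser morphism $\theta$; moreover a short computation with the measuring and cocycle axioms shows $\theta$ is an algebra morphism, so the algebra structure put on $\coinv{\sigsmash{A}}$ by Example \ref{exa_equalizer_alg1} is the original one of $A$. When $A = \dcatunit$ one has $\theta = \dbiunit = \biunit_{\sigsmash{\dcatunit}}$, so the equaliser diagram above is exactly the one in Definition \ref{def_cleft}, i.e. $\sigsmash{\dcatunit}$ is an $\ol{H}$-cleft object.

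It remains to prove that $\sect$ is convolution invertible, which I regard as the crux. I would use that $\sigma$ has a convolution inverse $\coninv{\sigma}$ and that $H$ is a Hopf algebra with antipode $\antip$, and, imitating the classical crossed-product formula, define $\coninv{\sect} : \ol{H} \to \sigsmash{A}$ to be the morphism informally given by
\[ h \longmapsto \sigmul\big(\,\theta\big(\coninv{\sigma}(\dantip(h_{(2)}) \ot h_{(3)})\big) \ot \sect\big(\dantip(h_{(1)})\big)\,\big) , \]
where $h_{(1)} \ot h_{(2)} \ot h_{(3)} := (\dbicomu \ot \id_{\ol{H}})\dbicomu(h)$; this is realised as a genuine morphism of $\scr{D}$ built from $\dbicomu$, $\dantip$, $\coninv{\sigma}$ and the left braidings $\brd_{H,\ol{H}}$ and $\brd_{H,A}$ (and, since $\sigmul(\theta \ot \sect) = \id_{A \ot \ol{H}}$ by a direct computation, it equals $h \mapsto \coninv{\sigma}(\dantip(h_{(2)}) \ot h_{(3)}) \ot \dantip(h_{(1)})$). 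One then checks $\sect \conv \coninv{\sect} = \biunit_{\sigsmash{A}}\dbicounit = \coninv{\sect} \conv \sect$, and this verification is the main obstacle. After unfolding $\sigmul$ (and with it $\hat{\sigma}$ and $\brd^\nu_{H,A}$) one must bring the $\sigma$- and $\coninv{\sigma}$-contributions into contact, which is exactly the purpose of relations $(\ref{lem_cocycle_rels2})$ and $(\ref{lem_cocycle_rels3})$ of Lemma \ref{lem_cocycle_rels} (one for each of the two convolution identities, which are mirror images of each other); the antipode identities $\dbimu(\dantip \ot \id)\dbicomu = \dbiunit\dbicounit = \dbimu(\id \ot \dantip)\dbicomu$ together with the standard compatibility of $\dantip$ with $\dbicomu$ then collapse the resulting expression, and the leftover unit terms are cleared using $(\ref{lem_cocycle_rels5})$, $(\ref{lem_cocycle_rels6})$ and Remark \ref{rema_def_cocycle_rels}. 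The genuinely delicate point is the bookkeeping of the several left braidings hidden inside $\sigmul$, $\hat{\sigma}$ and $\brd^\nu_{H,A}$; I would carry the computation out in the graphical calculus of \cite{MR2948489}, using braiding axioms (1)--(3) to slide strands and, in particular, naturality (axiom (1)) to pull $\coninv{\sigma}$, $\mact_A$ and $\dantip$ through braidings. Together with the first two paragraphs, this gives that $\sigsmash{A}$ is an $\ol{H}$-cleft extension with section $\biunit_A \ot \id_{\ol{H}}$ and coinvariants $A$, and the cleft-object statement for $A = \dcatunit$.
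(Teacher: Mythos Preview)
Your proposal is correct and follows essentially the same approach as the paper. Your candidate for the convolution inverse, once realised as a morphism in $\scr{D}$ via the braiding $\brd_{H,A}$ and simplified using $\sigmul(\theta\ot\sect)=\id_{A\ot\ol{H}}$, is exactly the paper's formula $\brd_{H,A}(\id\ot\coninv{\sigma})(\dantip\ot\dantip\ot\id)\ol{\bicomu^2}$; the paper then carries out the two convolution computations explicitly (invoking $(\ref{lem_cocycle_rels3})$ for $\sect\conv\coninv{\sect}$ and a more direct manipulation for $\coninv{\sect}\conv\sect$), and establishes the coinvariants with the same retraction $\id_A\ot\dbicounit$ that you use.
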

\begin{proof}
	Clearly $\biunit_A \ot \id: \ol{H} \rightarrow \sigsmash{A}$ is a morphism of $\ol{H}$-comodules. We show that it is convolution invertible with inverse morphism 
	\begin{align*}
		\brd_{H,A} (\id \ot \coninv{\sigma}) (\dantip \ot \dantip \ot \id) \ol{\bicomu^2} \matdot
	\end{align*}
	The following calculation uses $(\ref{lem_cocycle_rels3})$ in the third last equality, $(\ref{lem_cocycle_rels5})$ and $(\ref{lem_cocycle_rels6})$ in the last equality as well as $\dbicomu S = \dbrd (S \ot S) \dbicomu$:
	\begin{align*}
	&(\biunit_A \ot \id) \conv \left( \brd_{H,A} (\id \ot \coninv{\sigma}) (\dantip \ot \dantip \ot \id) \ol{\bicomu^2} \right) \matdot
	\\=& (\bimu_A \ot \id) (\id \ot \sigma \ot \dbimu) (\mact_A \ot \bicomu_{\ol{H} \ot \ol{H}}) (\id \ot \brd_{H \ot H, A}) \\& (\id^3 \ot \coninv{\sigma}) (\id^2 \ot \dantip \ot \dantip \ot \id) \dbicomu^4
	\\=& (\bimu_A \ot \id)(\id \ot \sigma \ot \dbimu)(\mact_A \ot \id \ot \dbrd \ot \id) (\id \ot \brd_{H \ot H \ot H \ot H, A}) \\& (\id^3 \ot \dbrd \ot \coninv{\sigma}) (\id^3 \ot \dantip \ot \dantip \ot \dantip \ot \id) \dbicomu^6
	\\=& (\bimu_A \ot \dbiunit)(\mact_A \ot \sigma) (\id \ot \brd_{H \ot H, A}) (\id^3 \ot \coninv{\sigma}) (\id^2 \ot \dantip \ot \dantip \ot \id) \dbicomu^4
	\\=& (\bimu_A \ot \dbiunit)(\mact_A \ot \id)(\id \ot \coninv{\sigma} \ot \sigma) (\id \ot \brd_{H \ot H,\ol{H}\ot \ol{H}}) \\& (\id^2 \ot \dantip \ot \dantip \ot \id) \dbicomu^4
	\\=& (\id \ot \dbiunit) \left( \left(\mact_A (\id \ot \coninv{\sigma})\right) \conv (\sigma \ot \dbicounit) \right) (\id \ot S \ot \id) \dbicomu^2
	\\=& (\id \ot \dbiunit) \left( \left(\sigma (\id \ot \dbimu) \right) \conv (\coninv{\sigma} (\dbimu \ot \id)) \right) (\id \ot S \ot \id) \dbicomu^2
	\\=& (\bimu_A \ot \dbiunit) (\sigma \ot \coninv{\sigma}) (\id \ot \dbrd \ot \id) \\& (\id \ot \dbimu \ot \dbimu \ot \id) (\id^2 \ot \dantip \ot \dantip \ot \id^2) \dbicomu^5
	= (\biunit_A \ot \dbiunit) \dbicounit \matdot
	\end{align*}
	Moreover
	\begin{align*}
	& \left( \brd_{H,A} (\id \ot \coninv{\sigma}) (\dantip \ot \dantip \ot \id) \ol{\bicomu^2} \right) \conv (\biunit_A \ot \id) 
	\\=& (\bimu_A \ot \id) (\id \ot \sigma \ot \dbimu) (\id \ot \bicomu_{\ol{H} \ot \ol{H}}) \\& (\brd_{H,A} \ot \id) (\id \ot \coninv{\sigma} \ot \id) (\dantip \ot \dantip \ot \id^2) \dbicomu^3
	\\=& (\bimu_A \ot \id) (\coninv{\sigma} \ot \sigma \ot \id) (\id \ot \dbrd \ot \id \ot \dbimu)(\brd_{H,\ol{H} \ot \ol{H} \ot \ol{H} \ot \ol{H} } \ot \id)\\&(\id \ot \dbrd \ot \id^3) (\dantip \ot \dantip \ot \dantip \ot \id^3)\dbicomu^5
	\\=& ((\coninv{\sigma} \conv \sigma) \ot \dbimu)(\brd_{H, \ol{H} \ot \ol{H}} \ot \id)(\dantip \ot \dantip \ot \id^2) \dbicomu^3
	= (\biunit_A \ot \dbiunit) \dbicounit \matdot
	\end{align*}
	This implies that $\sigsmash{A}$ is an $\ol{H}$-cleft extension with section $\biunit_A \ot \id$. Finally
	\begin{align*}
	\begin{tikzcd}[ampersand replacement=\&]
	A \arrow{r}{\id \ot \dbiunit} \& \sigsmash{A} \arrow[shift left]{r}{\id \ot \dbicomu} \arrow[', shift right]{r}{\id \ot \id \ot \dbiunit} \& \sigsmash{A} \ot \ol{H}
	\end{tikzcd} \matdot 
	\end{align*}
	is an equalizer diagram: If $M \in \scr{D}$ is an object with morphism $\alpha : M \rightarrow \sigsmash{A}$, such that $(\id \ot \dbicomu) \alpha = (\id \ot \id \ot \dbiunit) \alpha$, then $\beta:=(\id \ot \dbicounit) \alpha: M \rightarrow A$ is the unique morphism with $\alpha = (\id \ot \dbiunit) \beta$.
\end{proof}

\begin{lem} \label{lem_conv_inv_of_modole_morphism}
	Let $B \in \scr{D}$ be an $\ol{H}$-cleft extension with section $\sect$. Then
	\begin{align*}
		\coact_B \coninv{\sect} = (\coninv{\sect} \ot \dantip) \dbrd \dbicomu \matdot
	\end{align*}
\end{lem}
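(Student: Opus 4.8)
This identity is the categorical counterpart of the classical formula $\coact_B(\coninv{\gamma}(h)) = \sum \coninv{\gamma}(h_{(2)}) \ot \antip(h_{(1)})$ for a convolution-invertible comodule section, and the plan is to prove it by a uniqueness-of-inverses argument inside a convolution monoid. Since $B$ is an algebra in $\scr D$ and $H$ is an algebra in $\scr C$, Remark \ref{rem_tensor_alg_structure} makes $B \ot \ol H$ an algebra in $\scr D$, with multiplication $(\bimu_B \ot \dbimu)(\id \ot \brd_{H,B} \ot \id)$ and unit $\biunit_B \ot \dbiunit$, while $\ol H$ is a coalgebra in $\scr D$; hence the morphisms $\ol H \to B \ot \ol H$ in $\scr D$ form a convolution monoid with unit $(\biunit_B \ot \dbiunit)\dbicounit$.

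First I would reduce the statement to a claim about left convolution inverses. Since $\gamma$ is a comodule morphism, $\coact_B \gamma = (\gamma \ot \id_{\ol H})\dbicomu$, and since $B$ is an $\ol H$-comodule algebra, $\coact_B : B \to B \ot \ol H$ is an algebra morphism with $\coact_B\biunit_B = \biunit_B \ot \dbiunit$. Applying Lemma \ref{lem_conv_alg_morphism} with $\psi = \coact_B$, $\phi = \id_{\ol H}$ to the relations $\gamma \conv \coninv{\gamma} = \biunit_B\dbicounit = \coninv{\gamma} \conv \gamma$ shows that $\coact_B \coninv{\gamma}$ is a two-sided convolution inverse of $\coact_B\gamma$ in this monoid. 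It therefore suffices to show that $\theta := (\coninv{\gamma} \ot \dantip)\dbrd\dbicomu$ is a left convolution inverse of $\coact_B\gamma$; then $\theta = \theta \conv \bigl(\coact_B\gamma \conv \coact_B\coninv{\gamma}\bigr) = \bigl(\theta \conv \coact_B\gamma\bigr) \conv \coact_B\coninv{\gamma} = (\biunit_B\ot\dbiunit)\dbicounit \conv \coact_B\coninv{\gamma} = \coact_B\coninv{\gamma}$, the asserted equality.

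The heart of the argument is the computation of $\theta \conv (\coact_B\gamma) = \bimu_{B\ot\ol H}\bigl(\theta \ot (\coact_B\gamma)\bigr)\dbicomu$. Inside $\bimu_{B\ot\ol H}$ the braiding $\brd_{H,B}$ is applied to the pair consisting of the $\ol H$-output of $\theta$ (which is $\dantip$ of a leg) and the $B$-output of $\coact_B\gamma$ (which is $\gamma$ of a leg); as $\dantip$ is the image under $\funCD$ of the morphism $\antip$ of $\scr C$, axiom (1) of a left braiding turns $\brd_{H,B}(\dantip \ot \gamma)$ into $(\gamma \ot \dantip)\dbrd$. The two inner copies of $\dbrd$ then combine, by axiom (3), into $\brd_{H,\ol H\ot\ol H} \ot \id_{\ol H}$; coassociativity of $\dbicomu$ regroups the iterated coproduct, and axiom (1) once more (with $f = \id_H$, $g = \dbicomu$) pushes $\brd_{H,\ol H\ot\ol H}$ to the left past one $\dbicomu$, leaving a single $\dbrd$ on the two outermost legs only. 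Now the two middle legs feed exactly $\bimu_B(\coninv{\gamma} \ot \gamma)\dbicomu = \coninv{\gamma} \conv \gamma = \biunit_B\dbicounit$, which collapses them; the surviving $\dbicounit$ then annihilates the remaining $\dbrd$ (naturality of $\brd$ together with $\brd_{H,\dcatunit} = \id$), the counit axiom contracts the coproduct, and the two outer legs feed $\dbimu(\dantip \ot \id_{\ol H})\dbicomu = \dbiunit\dbicounit$, the antipode identity $\antip \conv \id_H = \biunit\bicounit$ of $\scr C$ pushed through $\funCD$. This leaves $\theta \conv (\coact_B\gamma) = (\biunit_B\ot\dbiunit)\dbicounit$, as needed.

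The main obstacle is not conceptual — the argument follows the classical proof — but the careful handling of the one-sided braiding: at every step one must check that each morphism dragged past a copy of $B$ or of $\ol H$ through a braiding is of the form $\funCD(f)$ for some morphism $f$ of $\scr C$. This holds for $\dantip$ (because $\antip = \coninv{\id_H}$ is by definition a morphism of $\scr C$), for $\dbicomu$, and for $\dbicounit$, so that the braiding axioms (1), (3) and the normalization $\brd_{H,\dcatunit} = \id$ apply where required; the morphisms $\gamma$ and $\coninv{\gamma}$, which live only in $\scr D$, are never braided past $B$ but only appear in the $\scr D$-slot of a braiding, where axiom (1) still applies. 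Carrying out the calculation in a Sweedler-type graphical notation as in \cite{MR2948489} keeps the bookkeeping manageable.
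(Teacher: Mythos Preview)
Your proof is correct and follows essentially the same strategy as the paper: both arguments exploit uniqueness of inverses in the convolution monoid $\mathrm{Hom}_{\scr D}(\ol H, B\ot\ol H)$, using Lemma~\ref{lem_conv_alg_morphism} to identify $\coact_B\coninv{\sect}$ as the inverse of $\coact_B\sect = (\sect\ot\id)\dbicomu$ and then checking that $(\coninv{\sect}\ot\dantip)\dbrd\dbicomu$ is also an inverse. The paper simply asserts the latter fact, whereas you spell out the verification (checking only the left-inverse property, which suffices) and carefully track which morphisms may be pulled through the one-sided braiding; this extra detail is fine and does not constitute a different route.
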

\begin{proof}
	Since $\sect$ is a comodule morphism we have $\coact_B \sect = (\sect \ot \id) \dbicomu$. Observe that this morphism goes from the coalgebra $\ol{H}$ to the algebra $B \ot \ol{H}$. Now by Lemma \ref{lem_conv_alg_morphism} $\coact_B \coninv{\sect}$ is the convolution inverse of $\coact_B \sect$ and furthermore $(\coninv{\sect} \ot \dantip) \dbrd \dbicomu$ is the convolution inverse of $(\sect \ot \id) \dbicomu$. The claim follows.
\end{proof}

\begin{prop} \label{prop_equalizer_alg2}
	Let $B$ be an $\ol{H}$-comodule algebra. If $B$ is an $\ol{H}$-cleft extension, then $\coinv{B}$ becomes a measuring on $\ol{H}$.
\end{prop}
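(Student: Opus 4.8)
The plan is to equip $\coinv{B}$ with a measuring morphism $\mact$ coming from the adjoint-type action of $\ol{H}$ on $B$ built from the section $\sect$. Concretely, I would first define on $B$ itself the morphism
\[
\mact_B := \bimu_B^2 (\sect \ot \id_B \ot \coninv{\sect}) (\id_{\ol{H}} \ot \id_B \ot \dbicomu)(\dbrd \ot \id_B) : \ol{H} \ot B \to B,
\]
i.e. $h \triangleright b = \sect(h_{(1)}) \, b \, \coninv{\sect}(h_{(2)})$ in Sweedler-like notation, using the left braiding to move $\ol{H}$ past $B$ where needed; here I already use Remark~\ref{rema_cleft_ext_morphism_unit} to assume $\sect \dbiunit = \biunit_B$ and $\coninv{\sect}\dbiunit = \biunit_B$. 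The first substantive step is to check that $\mact_B$ lands in $\coinv{B}$, more precisely that $\coact_B \mact_B = (\mact_B \ot \ol{\biunit_H}) \bicounit$-type equalizer condition holds — this is where Lemma~\ref{lem_conv_inv_of_modole_morphism} is essential, since it tells us how $\coact_B$ interacts with $\coninv{\sect}$, and combined with $\coact_B \sect = (\sect \ot \id)\dbicomu$ and the comodule-algebra property $\coact_B \bimu_B = (\bimu_B \ot \dbimu)(\id \ot \dbrd \ot \id)(\coact_B \ot \coact_B)$ one gets that the $\ol{H}$-coaction on $h \triangleright b$ collapses using the antipode axioms $\dbimu(S \ot \id)\dbicomu = \dbiunit\dbicounit = \dbimu(\id \ot S)\dbicomu$. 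By the universal property of the equalizer defining $\coinv{B}$, there is then a unique $\mact : \ol{H} \ot \coinv{B} \to \coinv{B}$ with $\iota \mact = \mact_B (\id_{\ol{H}} \ot \iota)$.

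Next I would verify the three measuring axioms for $(\coinv{B}, \mact)$. All three reduce, via the faithfulness encoded in the equalizer (i.e.\ $\iota$ is monic, so it suffices to check each identity after composing with $\iota$), to identities for $\mact_B$ on $B$. The unitality axiom $\mact_B(\ol{\biunit_H} \ot \id) = \id$ follows from $\sect\dbiunit = \coninv{\sect}\dbiunit = \biunit_B$ and $\dbicomu\dbiunit = \dbiunit \ot \dbiunit$. The axiom $\mact_B(\id \ot \biunit_B) = \biunit_B \dbicounit$ follows from $\bimu_B(\sect \ot \coninv{\sect})\dbicomu = \sect \conv \coninv{\sect} = \biunit_B \dbicounit$ after pushing the unit of $B$ through. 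The multiplicativity axiom \eqref{defi_meas3} is the one real computation: expanding $\mact_B(\id \ot \bimu_B)$ one inserts a pair $\coninv{\sect}\conv\sect = \biunit_B\dbicounit$ in the middle, then uses coassociativity of $\dbicomu$ and the braid axioms (2) and (3) to regroup into $\bimu_B(\mact_B \ot \mact_B)(\id \ot \dbrd \ot \id)(\dbicomu \ot \id \ot \id)$; the left braiding's naturality with respect to $\bimu$, $\bicomu$ (which holds because these are morphisms in $\scr{C}$) is what lets the $\ol{H}$-strands pass through $B$ correctly.

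The main obstacle is the multiplicativity axiom \eqref{defi_meas3}: the bookkeeping of which $\ol{H}$-tensor-factor braids past which $B$-factor is delicate because the braiding is only available from one side, so one must be careful that every crossing that appears is of the allowed form $\brd_{?, \,?} $ with the $\scr{C}$-object on the left. Using the graphical calculus of \cite{MR2948489} as suggested in the preceding remark makes this manageable, but it is genuinely the heart of the argument; the coinvariance check in the first step is the second most delicate point, again because it hinges on Lemma~\ref{lem_conv_inv_of_modole_morphism} and the precise antipode identities in $\ol{H}$.
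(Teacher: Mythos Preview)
Your approach is essentially the paper's: define the adjoint-type morphism from $\sect$ and $\coninv{\sect}$, factor it through the equalizer, and verify the three measuring axioms after composing with the monic $\iota$. Two concrete points need correction before it goes through.

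First, the displayed formula for $\mact_B$ does not type-check: its domain is $\ol{H}\ot B$, but $\dbrd=\brd_{H,\ol{H}}$ requires two copies of $\ol{H}$, and the subsequent $(\id_{\ol H}\ot\id_B\ot\dbicomu)$ then has the wrong arity. The morphism that matches your Sweedler description (and the paper's definition) is
\[
\bimu_B^2(\sect\ot\id_B\ot\coninv{\sect})(\id_{\ol H}\ot\brd_{H,B})(\dbicomu\ot\id_B),
\]
and in the multiplicativity check the relevant braiding is $\brd_{H,B}$ (respectively $\brd_{H,\coinv{B}}$), not $\dbrd$.

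Second, and more substantively, $\mact_B$ does \emph{not} land in $\coinv{B}$ on all of $\ol H\ot B$: for a general $b$ the $\ol H$-component of $\coact_B\bigl(\sect(h_{(1)})\,b\,\coninv{\sect}(h_{(2)})\bigr)$ is, schematically, $h_{(2)}\,b_{(1)}\,\dantip(h_{(3)})$, which collapses to $1$ only when $b$ is coinvariant. The equalizer condition you must actually verify is therefore
\[
\coact_B\,\mact_B(\id_{\ol H}\ot\iota)=(\id_B\ot\dbiunit)\,\mact_B(\id_{\ol H}\ot\iota),
\]
exactly as in the paper; your later equation $\iota\,\mact=\mact_B(\id_{\ol H}\ot\iota)$ already reflects this restriction, but the sentence preceding it (``$\mact_B$ lands in $\coinv{B}$'') should be amended accordingly. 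With these two fixes your plan coincides with the paper's proof.
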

\begin{proof}
	In Example \ref{exa_equalizer_alg1} we saw that $\coinv{B}$ is an algebra. Let $\iota: \coinv{B} \rightarrow B$ be the equalizer morphism and let $\sect$ be a section for $B$. Using Lemma \ref{lem_conv_inv_of_modole_morphism} we have
	\begin{align*}
		& \coact_B \bimu_B^2 (\sect \ot \id \ot \coninv{\sect}) (\id \ot \brd_{H,B}) (\dbicomu \ot \iota)
		\\=& \bimu_{B \ot \ol{H}} (\bimu_B \ot \dbimu \ot \id \ot \id) (\id \ot \brd_{H,B} \ot \id \ot \id \ot \id) \\& (\coact_B \ot \coact_B \ot \coact_B) (\sect \ot \id \ot \coninv{\sect}) (\id \ot \brd_{H,B}) (\dbicomu \ot \iota)
		\\=& (\bimu_B^2 \ot \id) (\sect \ot \id \ot \coninv{\sect} \ot \id) (\id \ot \id \ot \dbrd) (\id \ot \brd_{H,B} \ot \id)
		\\& (\id \ot \dbimu \ot \id \ot \id) (\id^2 \ot S \ot \brd_{H,B}) (\ol{\bicomu^3} \ot \iota)
		\\=& (\id \ot \dbiunit) \bimu_B^2 (\sect \ot \id \ot \coninv{\sect}) (\id \ot \brd_{H,B}) (\dbicomu \ot \iota) \matcom
	\end{align*}
	hence there exists a unique morphism $\mact_{\coinv{B}} : \ol{H} \ot \coinv{B} \rightarrow \coinv{B}$, such that
	\begin{align*}
		\iota \, \mact_{\coinv{B}} = \bimu_B^2 (\sect \ot \id \ot \coninv{\sect}) (\id \ot \brd_{H,B}) (\dbicomu \ot \iota) \matdot
	\end{align*}
	Considering Remark \ref{rema_cleft_ext_morphism_unit} it is trivial that $\iota \mact_{\coinv{B}} (\dbiunit \ot \id) = \iota$ and hence by uniqueness of such morphisms $\mact_{\coinv{B}} (\dbiunit \ot \id) = \id_{\coinv{B}}$. Moreover
	\begin{align*}
		\iota \, \mact_{\coinv{B}} (\id \ot \biunit_{\coinv{B}}) 
		=& \bimu_B^2 (\sect \ot \id \ot \coninv{\sect}) (\id \ot \brd_{H,B}) (\dbicomu \ot \biunit_B) 
		\\ =& \bimu_B (\sect \ot \coninv{\sect}) \dbicomu = \biunit_B \dbicounit
		 = \iota \, \biunit_{\coinv{B}} \dbicounit \matcom
	\end{align*}
	hence by uniqueness $\mact_{\coinv{B}} (\id \ot \biunit_{\coinv{B}}) = \biunit_{\coinv{B}} \dbicounit$. Finally
	\begin{align*}
		&\iota \, \bimu_{\coinv{B}}(\mact_{\coinv{B}} \ot \mact_{\coinv{B}}) (\id \ot \brd_{H,\coinv{B}} \ot \id) (\dbicomu \ot \id^2)
		\\=& \bimu_{B} (\iota \ot \iota) (\mact_{\coinv{B}} \ot \mact_{\coinv{B}}) (\id \ot \brd_{H,\coinv{B}} \ot \id) (\dbicomu \ot \id^2)
		\\=& \bimu_B^5 (\sect \ot \id \ot \coninv{\sect} \ot \sect \ot \id \ot \coninv{\sect}) (\id \ot \brd_{H,B} \ot \id \ot \id \ot \id) \\& (\id^2 \ot \brd_{H,B} \ot \brd_{H,B}) (\id^3 \ot \brd_{H,B} \ot \id) (\ol{\bicomu^3} \ot \iota \ot \iota) 
		\\=& \bimu_B^3 (\sect \ot \id^2 \ot \coninv{\sect}) (\id \ot \id^2 \ot \brd_{H,B} ) (\id \ot \brd_{H,B} \ot \id) (\dbicomu \ot \iota \ot \iota) 
		\\=& \bimu_B^2 (\sect \ot \id \ot \coninv{\sect}) (\id \ot \brd_{H,B}) (\dbicomu \ot \bimu_{B}) (\id \ot \iota \ot \iota)
		\\=& \bimu_B^2 (\sect \ot \id \ot \coninv{\sect}) (\id \ot \brd_{H,B}) (\dbicomu \ot \iota) (\id \ot \bimu_{\coinv{B}})
		\\=&\iota \, \mact_{\coinv{B}} (\id \ot \bimu_{\coinv{B}})
		  \matdot
	\end{align*}
	Hence $\coinv{B}$ is a measuring on $\ol{H}$.
\end{proof}

\begin{prop} \label{prop_cleft_obj_yields_cocycle}
	Let $B \in \scr{D}$ be an $\ol{H}$-comodule algebra. If $B$ is an $\ol{H}$-cleft extension with section $\sect: \ol{H} \rightarrow B$, then for
	\begin{align*}
		\tilde{\sigma} := (\bimu_B (\sect \ot \sect)) \conv (\coninv{\sect} \, \dbimu) : \ol{H} \ot \ol{H} \rightarrow B
	\end{align*}
	there exists a unique morphism $\sigma : \ol{H}  \ot \ol{H} \rightarrow \coinv{B}$, such that $\tilde{\sigma} = \iota \sigma$, where $\iota: \coinv{B} \rightarrow B$ is the equalizer morphism. 
	This $\sigma$ is a cocycle and $\bimu_B (\iota \ot \sect): \sigsmash{\coinv{B}} \rightarrow B$ is an isomorphism of $\ol{H}$-cleft extensions.
\end{prop}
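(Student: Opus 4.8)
The plan is to carry out the classical Doi–Takeuchi style argument for cleft extensions, but carefully tracking which morphisms live in $\scr{C}$ and which in $\scr{D}$, using the left braiding $\brd$ in place of a genuine braiding wherever a Hopf-algebra structure morphism has to be pulled past an object of $\scr{D}$.

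First I would establish the existence and uniqueness of $\sigma$. By Lemma~\ref{lem_conv_alg_morphism} the morphism $\tilde\sigma=(\bimu_B(\sect\ot\sect))\conv(\coninv\sect\,\dbimu)$ is convolution invertible (each factor is, since $\sect$ and $\coninv\sect$ are, and $\bimu$ is an algebra morphism into $B$ after composing appropriately). To see $\tilde\sigma$ factors through the equalizer $\iota:\coinv B\to B$, I would compute $\coact_B\tilde\sigma$ and check it equals $(\tilde\sigma\ot\dbiunit_{\ol H})\cdot$(something), i.e. that $\tilde\sigma$ lands in the coinvariants: using that $\sect$ is a comodule morphism, Lemma~\ref{lem_conv_inv_of_modole_morphism} for $\coact_B\coninv\sect$, and the bialgebra axiom $\dbicomu\dbimu=(\dbimu\ot\dbimu)\bicomu_{\ol H\ot\ol H}$ together with $\dbicomu\antip=\dbrd(\antip\ot\antip)\dbicomu$, the $\ol H$-tails telescope to $\dbiunit_{\ol H}\,\dbicounit_{\ol H\ot\ol H}$, exactly the equalizer condition. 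Hence a unique $\sigma:\ol H\ot\ol H\to\coinv B$ with $\iota\sigma=\tilde\sigma$ exists, and it is convolution invertible because $\iota$ is (split) monic and $\tilde\sigma$ is.

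Next I would verify that $\Phi:=\bimu_B(\iota\ot\sect):\sigsmash{\coinv B}\to B$ is the desired isomorphism, using $A=\coinv B$ with its measuring structure $\mact_{\coinv B}$ from Proposition~\ref{prop_equalizer_alg2}. Its two-sided inverse should be $\Psi:=(\bimu_B(\id\ot\coninv\sect)\cdot(\text{pull }\coact_B\text{ through})\ot\sect_{\text{tail}})$ — concretely the standard formula $\Psi=(\bimu_B\ot\id_{\ol H})(\id_B\ot(\coninv\sect\ot\id_{\ol H})\dbicomu\ot\id)(\coact_B)$ lifted to $\coinv B\ot\ol H$; checking $\Phi\Psi=\id_B$ uses $\sect\conv\coninv\sect=\biunit_B\dbicounit$, and $\Psi\Phi=\id$ uses the defining relation $\iota\mact_{\coinv B}=\bimu_B^2(\sect\ot\id\ot\coninv\sect)(\id\ot\brd_{H,B})(\dbicomu\ot\iota)$ together with the equalizer property. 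That $\Phi$ is a comodule morphism is immediate from $\coact_{\sigsmash{\coinv B}}=\id\ot\dbicomu$ and $\sect$ being a comodule morphism. That $\Phi$ is an algebra morphism, i.e. $\Phi\,\sigmul=\bimu_B(\Phi\ot\Phi)$, is the computational heart: expand $\sigmul=(\bimu_A\ot\id)(\bimu_A\ot\hat\sigma)(\id\ot\brd^\nu_{H,A}\ot\id)$ with $A=\coinv B$, push everything into $B$ via $\iota$, replace $\iota\mact_{\coinv B}$ and $\iota\sigma$ by their $B$-level formulas, and collapse using $\coninv\sect\conv\sect=\biunit_B\dbicounit$, associativity of $\bimu_B$, naturality of $\brd$, and the comodule property of $\sect$. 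Once $\Phi$ is a bijective $\ol H$-comodule algebra morphism, it is an isomorphism of cleft extensions.

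Finally, that $\sigma$ is a cocycle: rather than verifying \eqref{defi_cocycle_rel1}--\eqref{defi_cocycle_rel3} directly, I would invoke Proposition~\ref{prop_sigmul_assoc}. Since $\Phi:\sigsmash{\coinv B}\to B$ is an algebra isomorphism and $B$ is associative with unit $\biunit_B$, the multiplication $\sigmul$ on $\coinv B\ot\ol H$ is associative with unit $\Phi^{-1}\biunit_B=\biunit_{\coinv B}\ot\dbiunit$ (the last equality from $\sect\dbiunit=\biunit_B$, Remark~\ref{rema_cleft_ext_morphism_unit}, and $\iota\biunit_{\coinv B}=\biunit_B$). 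As $\sigma$ is convolution invertible, Proposition~\ref{prop_sigmul_assoc} then gives that $\sigma$ is a cocycle, closing the argument. The main obstacle I anticipate is the algebra-morphism computation for $\Phi$: it is where the asymmetry of the left braiding really bites, since one must route the $H$-comultiplications and the measuring $\mact_{\coinv B}$ past $B$ using only $\brd_{H,-}$ and $\brd^\nu_{H,-}$, and keep the bookkeeping of $\sect/\coninv\sect$ insertions straight; diagrammatic (string-diagram) notation as referenced in \cite{MR2948489} is essentially mandatory here.
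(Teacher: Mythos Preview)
Your outline follows the paper's proof essentially step for step: factor $\tilde\sigma$ through the equalizer, build the inverse of $\Phi=\bimu_B(\iota\ot\sect)$ from $\bimu_B(\id\ot\coninv\sect)\coact_B$ (after checking this lands in the coinvariants), verify $\Phi\sigmul=\bimu_B(\Phi\ot\Phi)$ by expanding $\iota\mact_{\coinv B}$ and $\iota\sigma$ in $B$ and collapsing $\coninv\sect\conv\sect$, and then appeal to Proposition~\ref{prop_sigmul_assoc} to conclude $\sigma$ is a cocycle. That strategy is exactly the paper's.

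There is one genuine gap. Your justification that $\sigma$ is convolution invertible in $\coinv B$ --- ``because $\iota$ is (split) monic and $\tilde\sigma$ is'' --- does not work as stated. Nothing in the ambient hypotheses on $\scr D$ guarantees that the equalizer $\iota$ is \emph{split}, and even a splitting would not be an algebra map, so you cannot simply pull a convolution inverse back along it. What you need is that the convolution inverse $\tilde\pi:=(\sect\,\dbimu)\conv\bigl(\bimu_B(\coninv\sect\ot\coninv\sect)\dbrd\bigr)$ of $\tilde\sigma$ \emph{itself} satisfies $\coact_B\tilde\pi=(\id\ot\dbiunit)\tilde\pi$, hence factors as $\tilde\pi=\iota\pi$; then monicity of $\iota$ together with $\iota$ being an algebra morphism gives $\sigma\conv\pi=\biunit_{\coinv B}\,\dbicounit$. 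The paper carries out precisely this second coinvariance computation (parallel to the one for $\tilde\sigma$, again using Lemma~\ref{lem_conv_inv_of_modole_morphism} and $\dbimu(\dantip\ot\dantip)\dbrd=\dantip\,\dbimu$). Once you insert that step, your argument is complete and coincides with the paper's.
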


\begin{rema*}
	Observe that $\bimu_B (\iota \ot \sect) (\biunit_{\coinv{B}} \ot \id) = \sect$, i.e. the isomorphism $ \bimu_B (\iota \ot \sect)$ is compatible with the underlying sections as well.
\end{rema*}

\begin{proof}
	Refer to Examples \ref{exa_equalizer_alg1} and Proposition \ref{prop_equalizer_alg2} to recall the algebra and measuring structure of $\coinv{B}$ and the relations between the structure morphisms and $\iota$.
	We first verify that $\coact_B \tilde{\sigma} =  (\id \ot \dbiunit) \tilde{\sigma}$: Using that $\sect$ is a comodule morphism, Lemma \ref{lem_conv_inv_of_modole_morphism} and $ \coact_B \bimu_B = (\bimu_B \ot \dbimu) (\id \ot \brd_{H,B} \ot \id) (\coact_B \ot \coact_B)$ we obtain:
	\begin{align*}
		\coact_B \tilde{\sigma} =& (\bimu_B \ot \dbimu) (\id \ot \brd_{H,B} \ot \id) (\coact_B \ot \coact_B) (\bimu_B \ot \coninv{\sect}) (\sect \ot \sect \ot \dbimu) \bicomu_{\ol{H} \ot \ol{H}}
		\\=&  (\bimu_B \ot \dbimu) (\id \ot \brd_{H \ot H, B}) (\bimu_B \ot \id \ot \dantip \ot \coninv{\sect})\\& (\sect \ot \sect \ot \dbimu \ot \dbimu \ot \dbimu) \bicomu_{\ol{H} \ot \ol{H}}^3
		\\=& (\bimu_B \ot \dbiunit) (\bimu_B \ot \coninv{\sect})(\sect \ot \sect \ot \dbimu) \bicomu_{\ol{H} \ot \ol{H}}
		= (\id \ot \dbiunit) \tilde{\sigma}
		\matdot
	\end{align*}
	Hence we obtain a unique morphism $\sigma : \ol{H}  \ot \ol{H} \rightarrow \coinv{B}$, such that $\tilde{\sigma} = \iota \sigma$. Letting $\tilde{\pi} := (\sect \dbimu) \conv (\bimu_B (\coninv{\sect} \ot \coninv{\sect}) \dbrd )$, using also $\dbimu (\dantip \ot \dantip) \dbrd = \dantip \dbimu $ we similarly to above obtain
	\begin{align*}
		\coact_B \tilde{\pi} =& (\bimu_B \ot \dbimu) (\id \ot \brd_{H \ot H, B})(\sect \ot \id \ot \dantip \ot \bimu_B(\coninv{\sect} \ot \coninv{\sect})) \\& (\dbimu \ot \dbimu \ot \dbimu \ot \dbrd) \bicomu_{\ol{H} \ot \ol{H}}^3 
		\\=& (\bimu_B \ot \dbiunit) ( \sect\dbimu \ot ( \bimu_B (\coninv{\sect} \ot \coninv{\sect}) \dbrd) ) \bicomu_{\ol{H} \ot \ol{H}}
		= (\id \ot \dbiunit) \tilde{\pi}
		\matcom
	\end{align*}
	hence there must exist a unique morphism $\pi : \ol{H}  \ot \ol{H} \rightarrow \coinv{B}$, such that $\tilde{\pi} = \iota \pi$. We show that $\sigma$ and $\pi$ are convolution inverse: First observe that $\tilde{\sigma}$ and $\tilde{\pi}$ are convolution inverse, since $\coninv{(\sect \dbimu)} = \coninv{\sect} \dbimu$ and $\coninv{(\bimu_B (\sect \ot \sect))} = \bimu_B (\coninv{\sect} \ot \coninv{\sect}) \dbrd$. Then
	\begin{align*}
		\iota (\sigma \conv \pi) = \bimu_B (\iota \sigma \ot \iota \pi) \bicomu_{\ol{H} \ot \ol{H}} = \tilde{\sigma} \conv \tilde{\pi} = \biunit_B (\dbicounit \ot \dbicounit) = \iota \biunit_{\coinv{B}} (\dbicounit \ot \dbicounit) \matcom
	\end{align*}
	hence $\sigma \conv \pi = \biunit_{\coinv{B}} (\dbicounit \ot \dbicounit)$ and similarly $\pi \conv \sigma = \biunit_{\coinv{B}} (\dbicounit \ot \dbicounit)$. 
	
	Now $f := \bimu_B (\iota \ot \sect)$ is an isomorphism: 
	Consider that by Lemma \ref{lem_conv_inv_of_modole_morphism} we have
	\begin{align*}
	&\coact_B \bimu_B (\id \ot \coninv{\sect}) \coact_B \\=& (\bimu_B \ot \dbimu) (\id \ot \brd_{H \ot H, B}) (\id \ot \id \ot \dantip \ot \coninv{\sect}) (\id \ot \dbicomu^2) \coact_B \\=& (\id \ot \dbiunit) \bimu_B (\id \ot \coninv{\sect}) \coact_B
	\end{align*}
	hence there exists a unique morphism $\alpha : B \rightarrow \coinv{B}$, such that $\iota \alpha = \bimu_B (\id \ot \coninv{\sect}) \coact_B$. Then setting $g := (\alpha \ot \id) \coact_B : B \rightarrow \sigsmash{\coinv{B}}$ we obtain $fg = \id_B$ and
	\begin{align*}
	\iota \alpha f = \bimu_B^2 (\iota \ot \sect \ot \coninv{\sect}) (\id \ot \dbicomu) = \iota (\id \ot \dbicounit) \matcom
	\end{align*}
	which implies $\alpha f = \id \ot \dbicounit$ and thus
	\begin{align*}
	gf &= (\alpha \ot \id) \coact_B \bimu_B (\iota \ot \sect)
	= (\alpha \ot \id) (f \ot \id) (\id \ot \dbicomu) = \id_{\coinv{B}} \ot \id_{\ol{H}} \matdot
	\end{align*}
	Since
	\begin{align*}
		f \sigmul =& \bimu_B^3 (\id \ot \id \ot \iota \sigma \ot \sect \dbimu) (\id \ot (\iota \mact_{\coinv{B}}) \ot \bicomu_{\ol{H} \ot \ol{H}}) \\& (\id \ot \id \ot \brd_{H,{\coinv{B}}} \ot \id) (\iota \ot \dbicomu \ot \id \ot \id)
		\\=& \bimu_B^7 (\id^6 \ot \coninv{\sect} \ot \sect) (\id^4 \ot \sect \ot \sect \ot \dbimu \ot \dbimu) (\id^3 \ot \coninv{\sect} \ot \bicomu_{\ol{H} \ot \ol{H}}^2) \\& (\id \ot \sect \ot \brd_{H \ot H,B} \ot \id) (\iota \ot \dbicomu^2 \ot \iota \ot \id) \\ =& \bimu_B^3 (\iota \ot \sect \ot \iota \ot \sect) = \bimu_B (f \ot f)
	\end{align*}
	and $f (\biunit_{\coinv{B}} \ot \dbiunit) = \biunit_B$ (see Remark \ref{rema_cleft_ext_morphism_unit}) we obtain that $\sigmul$ is an associative multiplication with unit $\biunit_{\coinv{B}} \ot \dbiunit$ and $f$ is an isomorphism of algebras. Proposition \ref{prop_sigmul_assoc} then implies that $\sigma$ is a cocycle.
	Finally since
	\begin{align*}
		\coact_B f = (\bimu_B \ot \dbimu) (\id \ot \brd_{H,B} \ot \id) (\coact_B \iota \ot \coact_B \sect) = (f \ot \id) (\id \ot \dbicomu)
	\end{align*}
	$f$ is a morphism of $\ol{H}$-comodules and thus an isomorphism of $\ol{H}$-comodule algebras.
\end{proof}

\begin{defi}
	Given an $\ol{H}$-cleft extension $B \in \scr{D}$ with section $\sect$, we denote the cocycle $\ol{H} \ot \ol{H} \rightarrow \coinv{B}$ from Proposition \ref{prop_cleft_obj_yields_cocycle} with $\sigma_{\sect}$.
\end{defi}

\begin{rema} \label{rema_cleft_obj_smash}
	Let $B$ be an $\ol{H}$-cleft extension with section $\sect$. If $\sect$ is an algebra morphism, then $\sigma_{\sect} = \biunit_{\coinv{B}} (\dbicounit \ot \dbicounit)$, hence $\coinv{B}$ is an $\ol{H}$-module algebra by $(\ref{defi_cocycle_rel2})$ and thus $B$ as a comodule algebra is isomorphic to the smash product $\coinv{B} \# \ol{H}$ (see Remark \ref{rema_cocycle_axiomtwo}).
\end{rema}

\begin{lem} \label{lem_cocycle_of_sect_of_cocycle}
	Let $A \in \scr{D}$ be a measuring on $\ol{H}$ and $\sigma : \ol{H} \ot \ol{H} \rightarrow A$ be a cocycle. Then $\sigma = \sigma_{\biunit_A \ot \id}$.
\end{lem}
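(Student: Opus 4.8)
The plan is to pit the two constructions of the preceding propositions against each other and observe that the comparison isomorphism they produce is the identity. By Proposition~\ref{prop_cocycle_yields_cleft_obj}, $B := \sigsmash{A}$ is an $\ol{H}$-cleft extension with section $\sect := \biunit_A \ot \id_{\ol{H}}$; there $\coinv{B} = A$, the equalizer morphism is $\iota = \id_A \ot \dbiunit$, and $\bimu_B = \sigmul$. Feeding $B$ and $\sect$ into Proposition~\ref{prop_cleft_obj_yields_cocycle} yields exactly the cocycle $\sigma_{\sect} = \sigma_{\biunit_A \ot \id} : \ol{H} \ot \ol{H} \to \coinv{B} = A$ together with the algebra isomorphism
\[
f := \bimu_B(\iota \ot \sect) = \sigmul\bigl((\id_A \ot \dbiunit) \ot (\biunit_A \ot \id_{\ol{H}})\bigr) : \sigsmashp{A}{\sigma_{\sect}} \longrightarrow B,
\]
which in particular satisfies $f\,\bimu_{\sigma_{\sect}} = \bimu_B(f \ot f) = \sigmul(f \ot f)$. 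Hence it suffices to show $f = \id_{A \ot \ol{H}}$: then $\bimu_{\sigma_{\sect}} = \sigmul$ as morphisms $(A \ot \ol{H})^{\ot 2} \to A \ot \ol{H}$, and since $\biunit_{\coinv{B}} = \biunit_A$, Lemma~\ref{lem_sigmul_eps} (applied to the measuring $\coinv{B}$ with $\sigma_{\sect}$ and to the measuring $A$ with $\sigma$) gives $\sigma_{\sect} = (\id \ot \dbicounit)\bimu_{\sigma_{\sect}}(\biunit_A \ot \id \ot \biunit_A \ot \id) = (\id \ot \dbicounit)\sigmul(\biunit_A \ot \id \ot \biunit_A \ot \id) = \sigma$.

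To compute $f$ I would use the two identities
\[
\brd^\nu_{H,A}(\dbiunit \ot \id_A) = \id_A \ot \dbiunit, \qquad \hat{\sigma}(\dbiunit \ot \id_{\ol{H}}) = \biunit_A \ot \id_{\ol{H}},
\]
together with the unitality of $\bimu_A$. The first is immediate from the definition of $\brd^\nu_{H,A}$, using $\dbicomu\,\dbiunit = \dbiunit \ot \dbiunit$, the relation $\brd_{H,A}(\dbiunit \ot \id_A) = \id_A \ot \dbiunit$ (braiding against the unit object), and the first measuring axiom $\mact_A(\dbiunit \ot \id_A) = \id_A$. Granting them, insert the definition $\sigmul = (\bimu_A \ot \id_{\ol{H}})(\bimu_A \ot \hat{\sigma})(\id_A \ot \brd^\nu_{H,A} \ot \id_{\ol{H}})$ into $f$ and precompose with $\id_A \ot \dbiunit \ot \biunit_A \ot \id_{\ol{H}}$: the inner braiding hits $\dbiunit \ot \biunit_A$ and turns it into $\biunit_A \ot \dbiunit$; the inner $\bimu_A$ then absorbs this $\biunit_A$ while $\hat{\sigma}$ hits $\dbiunit \ot \id_{\ol{H}}$ and returns $\biunit_A \ot \id_{\ol{H}}$; and the outer $\bimu_A$ absorbs the new $\biunit_A$. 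Nothing survives, so $f = \id_{A \ot \ol{H}}$.

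The one step that is not a single-line unit manipulation is the second identity $\hat{\sigma}(\dbiunit \ot \id_{\ol{H}}) = \biunit_A \ot \id_{\ol{H}}$, which I expect to be the (still mild) crux. Expand $\hat{\sigma} = (\sigma \ot \dbimu)\bicomu_{\ol{H} \ot \ol{H}}$ with $\bicomu_{\ol{H} \ot \ol{H}} = (\id \ot \dbrd \ot \id)(\dbicomu \ot \dbicomu)$ (Remark~\ref{rem_tensor_alg_structure}); precomposing with $\dbiunit \ot \id_{\ol{H}}$ and pushing $\dbiunit$ past $\dbrd$ through $\dbrd(\dbiunit \ot \id_{\ol{H}}) = \id_{\ol{H}} \ot \dbiunit$ leaves $\sigma$ acting on $\dbiunit$ tensored with the first leg of $\dbicomu$ and $\dbimu$ acting on $\dbiunit$ tensored with the second leg of $\dbicomu$; now $(\ref{lem_cocycle_rels5})$ gives $\sigma(\dbiunit \ot \id_{\ol{H}}) = \biunit_A \dbicounit$, unitality gives $\dbimu(\dbiunit \ot \id_{\ol{H}}) = \id_{\ol{H}}$, and the counit axiom $(\dbicounit \ot \id)\dbicomu = \id$ closes it. (One could instead bypass $f$ and compute $\tilde{\sigma} := (\bimu_B(\sect \ot \sect)) \conv (\coninv{\sect}\,\dbimu)$ directly, but that drags in the explicit formula for $\coninv{\sect}$ from Proposition~\ref{prop_cocycle_yields_cleft_obj} and is considerably longer.)
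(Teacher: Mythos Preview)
Your argument is correct and takes a genuinely different route from the paper. The paper computes $\tilde\sigma=(\bimu_B(\sect\ot\sect))\conv(\coninv{\sect}\,\dbimu)$ head-on, plugging in the explicit formula $\coninv{\sect}=\brd_{H,A}(\id\ot\coninv{\sigma})(\dantip\ot\dantip\ot\id)\ol{\bicomu^2}$ from Proposition~\ref{prop_cocycle_yields_cleft_obj} and simplifying until $\sect\conv\coninv{\sect}$ appears and collapses; this is the longer direct computation you explicitly set aside in your final remark. You instead show that the comparison isomorphism $f=\bimu_B(\iota\ot\sect)$ of Proposition~\ref{prop_cleft_obj_yields_cocycle} is literally $\id_{A\ot\ol{H}}$, whence $\mu_{\sigma_\sect}=\sigmul$, and then invoke Lemma~\ref{lem_sigmul_eps} twice (for $\sigma_\sect$ on the measuring $\coinv{B}$ and for $\sigma$ on the measuring $A$, both sharing unit $\biunit_A$) to recover each cocycle from the common multiplication. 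Your approach is more conceptual and avoids both the antipode and the explicit $\coninv{\sect}$; the only cocycle-specific input you need is $(\ref{lem_cocycle_rels5})$ for $\hat\sigma(\dbiunit\ot\id)=\biunit_A\ot\id$. The paper's approach, while heavier, has the minor advantage of being self-contained and not routing through the algebra-isomorphism statement of Proposition~\ref{prop_cleft_obj_yields_cocycle}.
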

\begin{proof}
	We must show
	\begin{align*}
	\sigma = (\id \ot \dbicounit) \left( \left( \sigmul (\sect \ot \sect) \right) \conv \left( \coninv{\sect} \dbimu \right) \right) \matcom
	\end{align*}
	where $\sect = \biunit_A \ot \id$ and $\coninv{\sect} = \brd_{H,A} (\id \ot \coninv{\sigma}) (\dantip \ot \dantip \ot \id) \ol{\bicomu^2}$ (see proof of Proposition \ref{prop_cocycle_yields_cleft_obj}). This is achieved as follows: 
	\begin{align*}
	&(\id \ot \dbicounit) \left( \left( \sigmul (\sect \ot \sect) \right) \conv \left( \coninv{\sect} \dbimu \right) \right)
	\\=& (\bimu_A \ot \dbicounit) (\bimu_A \ot \sigma \ot \dbimu) (\id \ot \mact_A \ot \bicomu_{\ol{H} \ot \ol{H}}) (\id \ot \id \ot \brd_{H\ot H,A}) \\& (\id \ot \id^3 \ot \coninv{\sigma}) (\id \ot \id^2 \ot \dantip \ot \dantip \ot \id) (\sigma \ot \dbimu \ot \dbimu \ot \dbimu \ot \dbimu \ot \dbimu) \bicomu_{\ol{H} \ot \ol{H}}^5
	\\=& (\bimu_A \ot \dbicounit) (\bimu_A \ot \sigma \ot \dbimu) (\id \ot \mact_A \ot \bicomu_{\ol{H} \ot \ol{H}}) (\id \ot \id \ot \brd_{H\ot H,A}) \\& (\id \ot \id^3 \ot \coninv{\sigma}) (\id \ot \id^2 \ot \dantip \ot \dantip \ot \id) (\id \ot \dbicomu^4) (\sigma \ot \dbimu) \bicomu_{\ol{H} \ot \ol{H}}
	\\=& (\id \ot \left( \sect \conv \coninv{\sect} \right) ) (\sigma \ot \dbimu) \bicomu_{\ol{H} \ot \ol{H}} = \sigma \matdot
	\end{align*}
	The calculation uses $\dbicomu \dbimu = (\dbimu \ot \dbimu) \bicomu_{\ol{H} \ot \ol{H}}$ to benefit from the coassociativity of $\bicomu_{\ol{H} \ot \ol{H}}$.
\end{proof}

We can give an equivalence of categories: 
\begin{itemize}
	\item Let $\scr{A}$ be the category where the objects are pairs $(A,\sigma)$, where $A$ is a measuring on $\ol{H}$ and $\sigma : \ol{H} \ot \ol{H} \rightarrow A$ is a cocycle and where a morphism $f : (A,\sigma) \rightarrow (A',\sigma')$ is a measuring morphism $f : A \rightarrow A'$, such that $ f \sigma = \sigma' $.
	\item Let $\scr{B}$ be the category where the objects are pairs $(B,\sect)$ of an $\ol{H}$-cleft extension $B$ and a section $\sect : \ol{H} \rightarrow B$, and where a morphism $f : (B,\sect) \rightarrow (B',\sect')$ is an $\ol{H}$-comodule algebra morphism $f : B \rightarrow B'$, such that $f \sect = \sect'$.
\end{itemize}

\begin{thm} \label{thm_cleft_cocycle}
	$\scr{A}$ and $\scr{B}$ are equivalent categories. An equivalence is given by the following functors:
	\begin{itemize}
		\item Let $F: \scr{A} \rightarrow \scr{B}$ be the functor that maps an object $(A,\sigma) \in \scr{A}$ to $(\sigsmash{A},\biunit_A \ot \id) \in \scr{B}$ and a morphism $f : (A,\sigma) \rightarrow (A',\sigma')$ in $\scr{A}$ to the morphism $f \ot \id : (\sigsmash{A},\biunit_A \ot \id) \rightarrow (\sigsmashp{A'}{\sigma'},\biunit_{A'} \ot \id) $ in $\scr{B}$.
		\item Let $G: \scr{B} \rightarrow \scr{A}$ the functor that maps an object $(B,\sect) \in \scr{B}$ to $(\coinv{B},\sigma_{\sect}) \in \scr{A}$ and if $f : (B,\sect) \rightarrow (B',\sect')$ is a morphism in $\scr{B}$ then let $G(f) : (\coinv{B},\sigma_{\sect}) \rightarrow (\coinv{B'},\sigma_{\sect'})$ be the unique morphism $\coinv{B} \rightarrow \coinv{B'}$ such that $\iota_{B'} G(f) = f \iota_B$, where $\iota_B : \coinv{B} \rightarrow B$, $\iota_{B'} : \coinv{B'} \rightarrow B'$ are the equalizer morphisms.
	\end{itemize}
\end{thm}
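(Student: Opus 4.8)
The plan is to check that $F$ and $G$ are well-defined functors and then to produce natural isomorphisms $GF \cong \id_{\scr{A}}$ and $FG \cong \id_{\scr{B}}$; essentially all of the analytic work is already contained in Propositions~\ref{prop_cocycle_yields_cleft_obj}, \ref{prop_equalizer_alg2} and~\ref{prop_cleft_obj_yields_cocycle} and in Lemma~\ref{lem_cocycle_of_sect_of_cocycle}, so what remains is bookkeeping. For $F$: on objects, $(\sigsmash{A},\biunit_A\ot\id)\in\scr{B}$ is exactly Proposition~\ref{prop_cocycle_yields_cleft_obj}. For a morphism $f:(A,\sigma)\to(A',\sigma')$ in $\scr{A}$ (a measuring morphism with $f\sigma=\sigma'$), compatibility of $f\ot\id$ with the coactions $\id\ot\dbicomu$ is immediate, and compatibility with the multiplications $\sigmul$ and $\mu_{\sigma'}$ follows by unwinding the formulas for $\sigmul$, $\hat{\sigma}$ and $\brd^\nu_{H,A}$ and using $f\bimu_A=\bimu_{A'}(f\ot f)$, $f\mact_A=\mact_{A'}(\id\ot f)$, $f\biunit_A=\biunit_{A'}$ and $f\sigma=\sigma'$; since $f\biunit_A=\biunit_{A'}$ we get $(f\ot\id)(\biunit_A\ot\id)=\biunit_{A'}\ot\id$, so $f\ot\id$ is a morphism in $\scr{B}$. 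Functoriality of $F$ is then clear, as composites and identities are transported on the nose.

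For $G$: on objects, Propositions~\ref{prop_equalizer_alg2} and~\ref{prop_cleft_obj_yields_cocycle} give that $\coinv{B}$ is a measuring on $\ol{H}$ and $\sigma_{\sect}$ a cocycle. For $f:(B,\sect)\to(B',\sect')$, since $f$ is a comodule morphism one has $\coact_{B'}f\iota_B=(\id\ot\dbiunit)f\iota_B$ (with $\iota_B,\iota_{B'}$ the equalizer morphisms), so the universal property of the equalizer defining $\coinv{B'}$ yields a unique $G(f):\coinv{B}\to\coinv{B'}$ with $\iota_{B'}G(f)=f\iota_B$. That $G(f)$ is an algebra morphism and satisfies $G(f)\mact_{\coinv{B}}=\mact_{\coinv{B'}}(\id\ot G(f))$ follows by precomposing the defining relations for $\bimu_{\coinv{B}},\biunit_{\coinv{B}}$ (Example~\ref{exa_equalizer_alg1}) and $\mact_{\coinv{B}}$ (Proposition~\ref{prop_equalizer_alg2}) with the monomorphism $\iota_{B'}$ and using that $f$ is a comodule algebra morphism; and from $\iota_{B'}G(f)\sigma_{\sect}=f\tilde{\sigma}_{\sect}=\tilde{\sigma}_{\sect'}=\iota_{B'}\sigma_{\sect'}$ — the middle equality from $f\sect=\sect'$, $f\coninv{\sect}=\coninv{(\sect')}$ (Lemma~\ref{lem_conv_alg_morphism}) and multiplicativity of $f$ — we get $G(f)\sigma_{\sect}=\sigma_{\sect'}$ since $\iota_{B'}$ is monic. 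Functoriality of $G$ follows again from this universal property.

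For $GF\cong\id_{\scr{A}}$: given $(A,\sigma)$, Proposition~\ref{prop_cocycle_yields_cleft_obj} exhibits $(A,\id_A\ot\dbiunit)$ as an equalizer computing $\coinv{(\sigsmash{A})}$, and Lemma~\ref{lem_cocycle_of_sect_of_cocycle} gives $\sigma_{\biunit_A\ot\id}=\sigma$; with this choice of coinvariants $GF$ is literally $\id_{\scr{A}}$, and for any other choice it is the canonical comparison isomorphism of equalizers, which is natural in $(A,\sigma)$. For $FG\cong\id_{\scr{B}}$: given $(B,\sect)$, Proposition~\ref{prop_cleft_obj_yields_cocycle} says $f_B:=\bimu_B(\iota_B\ot\sect):\sigsmash{\coinv{B}}\to B$ is an isomorphism of $\ol{H}$-comodule algebras, and the remark following that proposition gives $f_B(\biunit_{\coinv{B}}\ot\id)=\sect$, so $f_B$ is an isomorphism $FG(B,\sect)\to(B,\sect)$ in $\scr{B}$; naturality in $(B,\sect)$ reduces to the one-line identity $gf_B=\bimu_{B'}(g\iota_B\ot g\sect)=\bimu_{B'}(\iota_{B'}G(g)\ot\sect')=f_{B'}(G(g)\ot\id)$ for $g:(B,\sect)\to(B',\sect')$.

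I expect the only place requiring genuine care to be the morphism-level verifications in the two preceding paragraphs — that $f\ot\id$ is an algebra morphism of crossed products and that $G(f)$ is compatible with $\mact_{\coinv{B}}$ — since they involve tracking the braidings $\brd^\nu_{H,A}$ and $\brd_{H,B}$ through the structure formulas; but no new idea beyond the already-established identities is needed, and each step becomes transparent in a graphical calculus of the kind used in the earlier proofs.
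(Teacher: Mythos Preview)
Your proposal is correct and follows essentially the same route as the paper: both arguments defer the well-definedness of $F$ to routine checks, obtain $G(f)$ via the equalizer universal property (using $f\coninv{\sect}=\coninv{(\sect')}$ from Lemma~\ref{lem_conv_alg_morphism}), invoke Lemma~\ref{lem_cocycle_of_sect_of_cocycle} for $GF\cong\id_{\scr{A}}$, and use the isomorphism $\bimu_B(\iota_B\ot\sect)$ from Proposition~\ref{prop_cleft_obj_yields_cocycle} together with the commuting square $g\,\bimu_B(\iota_B\ot\sect)=\bimu_{B'}(\iota_{B'}\ot\sect')(G(g)\ot\id)$ for $FG\cong\id_{\scr{B}}$. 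Your write-up is in fact slightly more explicit than the paper's (you spell out the measuring-morphism verification for $G(f)$ and the equalizer-comparison subtlety for $GF$), but there is no difference in strategy.
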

\begin{proof}
	The calculations to check that that $F$ is well defined are straight forward. For the calculations to check that $G$ is well defined observe that if $f : (B,\sect) \rightarrow (B',\sect')$ is a morphism in $\scr{B}$, then $\coninv{(\sect')} = \coninv{(f \sect)} = f \coninv{\sect}$, since $f$ is an algebra morphism. If $f : (A,\sigma) \rightarrow (A',\sigma')$ is a morphism in $\scr{A}$, then again $f$ is the unique morphism for $G(f\ot \id)$ such that 
	\begin{align*}
		(\id \ot \dbiunit) G(f\ot \id) = (f \ot \id) (\id \ot \dbiunit) \matdot
	\end{align*}
	Hence Lemma \ref{lem_cocycle_of_sect_of_cocycle} implies that $GF = \id_{\scr{A}}$.
	Moreover if $f : (B,\sect) \rightarrow (B',\sect')$ is a morphism in $\scr{B}$, then by Proposition \ref{prop_cleft_obj_yields_cocycle} $\bimu_B (\iota_B \ot \sect)$ and $\bimu_{B'} (\iota_{B'} \ot \sect')$ are isomorphisms in $\scr{B}$ and the diagram
	\begin{align*}\begin{tikzcd}[ampersand replacement=\&]
	\sigsmashp{\coinv{B}}{\sigma_{\sect}} \arrow{d}{\bimu_B (\iota_B \ot \sect)} \arrow{r}{G(f) \ot \id} \& \sigsmashp{\coinv{B'}}{\sigma_{\sect'}} \arrow{d}{\bimu_{B'} (\iota_{B'} \ot \sect')} \\
	B \arrow{r}{f} \& B'
	\end{tikzcd}\end{align*}
	commutes, since $f \sect = \sect'$. Hence $FG$ is naturally equivalent to $\id_{\scr{B}}$.
\end{proof}

Let $\scr{B}_\mathrm{ob}$ be the subcategory of $\scr{B}$ of objects $(B,\sect) \in \scr{B}$, such that $B$ is an $\ol{H}$-cleft object.

\begin{cor} \label{cor_cleft_cocycle}
	The following describe inverse maps between the set of cocycles $\sigma:\ol{H} \ot \ol{H} \rightarrow \dcatunit$ and the set of isomorphism classes of objects $(B,\sect) \in \scr{B}_\mathrm{ob}$, respectively:
	\begin{enumerate}
		\item Map a cocycle $\sigma : \ol{H} \ot \ol{H} \rightarrow \dcatunit$ to the isomorphism class of $(\sigsmash{\dcatunit},\id_{\ol{H}}) \in \scr{B}_\mathrm{ob}$. 
		\item Map the class of any $(B,\sect) \in \scr{B}_\mathrm{ob}$ to the cocycle $\sigma_{\sect}: \ol{H} \ot \ol{H} \rightarrow \dcatunit$.
	\end{enumerate}
\end{cor}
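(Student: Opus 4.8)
The plan is to derive the corollary directly from Theorem~\ref{thm_cleft_cocycle} by restricting the equivalence $(F,G)$ to suitable subcategories and then passing to isomorphism classes. The first step is the observation that every measuring morphism $\dcatunit \to \dcatunit$ equals $\id_{\dcatunit}$: it is an algebra morphism, hence agrees with $\biunit_{\dcatunit} = \id_{\dcatunit}$. Consequently the full subcategory $\scr{A}_0 \subseteq \scr{A}$ spanned by the objects $(\dcatunit, \sigma)$ is \emph{discrete}, and its object set is exactly the set of cocycles $\ol{H} \ot \ol{H} \to \dcatunit$ (there is a morphism $(\dcatunit,\sigma) \to (\dcatunit,\sigma')$ iff $\sigma = \sigma'$, and it is then unique).

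Next I would check that $F$ and $G$ restrict between $\scr{A}_0$ and $\scr{B}_\mathrm{ob}$. By Proposition~\ref{prop_cocycle_yields_cleft_obj}, $F(\dcatunit,\sigma) = (\sigsmash{\dcatunit}, \biunit_{\dcatunit} \ot \id)$ lies in $\scr{B}_\mathrm{ob}$, and $\biunit_{\dcatunit} \ot \id$ is $\id_{\ol{H}}$ under the strict identification $\dcatunit \ot \ol{H} = \ol{H}$. For $G$: if $(B,\sect) \in \scr{B}_\mathrm{ob}$ then by definition $\biunit_B$ is an equalizer for the pair $(\coact_B, \id \ot \dbiunit)$, so we may take $\coinv{B} = \dcatunit$ and $\iota_B = \biunit_B$; with this choice $G(B,\sect) = (\dcatunit, \sigma_{\sect}) \in \scr{A}_0$. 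Since $GF = \id_{\scr{A}}$ and $FG \cong \id_{\scr{B}}$ by Theorem~\ref{thm_cleft_cocycle}, these hold after restriction as well; because $\scr{A}_0$ is discrete, this says precisely that $F$ and $G$ induce a bijection between the set of cocycles $\ol{H} \ot \ol{H} \to \dcatunit$ and the set of isomorphism classes of objects of $\scr{B}_\mathrm{ob}$.

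Finally I would unwind the two bijections to match them with the maps in the statement: map~(1) is $\sigma \mapsto [F(\dcatunit,\sigma)] = [(\sigsmash{\dcatunit}, \id_{\ol{H}})]$ and map~(2) is $[(B,\sect)] \mapsto G(B,\sect) = \sigma_{\sect}$; well-definedness of~(2) on isomorphism classes is just that $G$ is a functor into the discrete category $\scr{A}_0$. That the two composites are identities is, in one direction, $GF = \id_{\scr{A}_0}$, which on $\scr{A}_0$ is Lemma~\ref{lem_cocycle_of_sect_of_cocycle} with $A = \dcatunit$, giving $\sigma_{\id_{\ol{H}}} = \sigma_{\biunit_{\dcatunit} \ot \id} = \sigma$; in the other direction it is the natural isomorphism $FG \cong \id_{\scr{B}}$ on $\scr{B}_\mathrm{ob}$, whose component at $(B,\sect)$ is the isomorphism $\bimu_B(\iota_B \ot \sect) : \sigsmashp{\coinv{B}}{\sigma_{\sect}} \to B$ of Proposition~\ref{prop_cleft_obj_yields_cocycle}, which by the remark following that proposition carries the section $\biunit_{\coinv{B}} \ot \id$ of $\sigsmashp{\coinv{B}}{\sigma_{\sect}}$ to $\sect$. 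The only input beyond Theorem~\ref{thm_cleft_cocycle}, Proposition~\ref{prop_cleft_obj_yields_cocycle} and Lemma~\ref{lem_cocycle_of_sect_of_cocycle} is the triviality of the endomorphisms of $\dcatunit$ in the category of measurings, so I expect no genuine obstacle here.
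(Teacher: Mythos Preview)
Your proposal is correct and is precisely the natural way to read the corollary off Theorem~\ref{thm_cleft_cocycle}; the paper gives no separate proof and simply records it as an immediate consequence, so your argument is just the obvious unpacking of that implication. The one nontrivial observation you make explicit --- that the full subcategory $\scr{A}_0$ is discrete because any algebra morphism $\dcatunit \to \dcatunit$ is the identity --- is exactly what is needed to pass from ``equivalence of categories'' to ``bijection of sets'', and is implicit in the paper's formulation.
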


\section{Liftings of a coradically graded Hopf algebra} \label{sect_liftings}
	Let $\fK$ be a field, $H$ a Hopf algebra over $\fK$, $\scr{C} = \prescript{H}{H}{\scr{YD}}$ be the category of left-left Yetter-Drinfeld modules, $\scr{D}=H\mathrm{-mod}$ the category of left $H$-modules and $\funCD : \prescript{H}{H}{\scr{YD}} \rightarrow H\mathrm{-mod}$ the forgetful functor. Let $\brd$ be the left braiding on $\funCD$ as described in Example \ref{exa_yetter_drinfeld}. Finally let $R \in \prescript{H}{H}{\scr{YD}}$ be a Hopf algebra (in terms of Definition \ref{def_hopf}). Then $R$ is an $H$-module algebra (the structure morphisms of $R$ are morphisms in $\prescript{H}{H}{\scr{YD}}$) and the smash product $R \# H$ becomes a (classical) Hopf algebra with multiplication and comultiplication given by
	\begin{align*}
	\bimu_{R\# H}  &\left(r \ot h , r' \ot h' \right) = r (h_{(1)} \cdot r') \ot h_{(2)} h'
	\\ \bicomu_{R\# H} & (r \ot h) = \left( r_{(1)} \ot r_{(2)(-1)} h_{(1)} \right) \ot \left( r_{(2)(0)} \ot h_{(2)} \right)  \matcom
	\end{align*}
	where $r,r' \in R$, $h,h' \in H$, and with antipode 
	\begin{align*}
	\antip_{R\# H}  &= (\biunit_R \bicounit_R \ot \antip_H) \conv (\antip_R \ot \biunit_H \bicounit_H) \matdot
	\end{align*}
	We denote $\scr{H} := R \# H$.
	Let $\cocyclesetR$ denote the set of all two-cocycles $\pi : R \ot R \rightarrow \fK$ in $\scr{D}=H\mathrm{-mod}$, in terms of Definition~\ref{def_cocycle} (this means $\pi$ is an $H$-module morphism and the defining relations rely on the left braiding $\brd$).
	Moreover let $\cocyclesetH$ denote the set of all (classical) two-cocycles $\sigma : \scr{H} \ot \scr{H} \rightarrow \fK$ and let $\cocyclesetHp$ be the set of all $\sigma \in \cocyclesetH$, such that
	\begin{align} \label{rel_defining_zp}
	\sigma(r \ot h, r' \ot h') &= \sigma(r \ot 1, h \cdot r' \ot 1) \bicounit_H(h') \matcom 
	\end{align}
	for all $r, r'\in R, h,h' \in H$.
	
	Let $\cleft(R)$ denote the set of all isomorphism classes $[(\scr{E},\sect)]$ of pairs $(\scr{E},\sect)$, where $R$ is an $R$-cleft object in $\scr{D}=H\mathrm{-mod}$, in terms of Definition~\ref{def_cleft}, and $\sect: R \rightarrow \scr{E}$ a section (isomorphy as in the category $\scr{B}$ described in Theorem \ref{thm_cleft_cocycle}). 
	Similarly let $\cleft(\scr{H})$ denote the set of all isomorphism classes of pairs $(E,\sect)$, where $E$ is an $\scr{H}$-cleft object and $\sect: \scr{H} \rightarrow E$ a section. Finally let $\cleft'(\scr{H})$ be the set of all $[(E,\sect)] \in \cleft(\scr{H})$, such that 
	\begin{align} \label{rel_defining_cleftp}
		\sect ( (1 \ot h) (r \ot h') (1 \ot h'') ) &= \sect (1 \ot h) \sect (r \ot h') \sect (1 \ot h'') \matcom
	\end{align}
	for all $r \in R, h,h',h'' \in H$ (this property is indepentent of the representative, since if $(\ref{rel_defining_cleftp})$ holds for the pair $(E,\sect)$, then it also holds for all $(E',\sect') \in [(E,\sect)]$).
	 
	
	Finally let $\alpha_R: \cocyclesetR \rightarrow \cleft(R)$, $\alpha_{\scr{H}}: \cocyclesetH \rightarrow \cleft(\scr{H})$ be the bijections of Corollary \ref{cor_cleft_cocycle}, respectively.

	\begin{lem} \label{lem_cocyclep_rel}
		Let $\sigma \in \cocyclesetHp$. Then for $r,r' \in R, h,h' \in H$:
		\begin{align*}
		\sigma(r \ot h, r' \ot h') &= \sigma (r \ot h, r' \ot 1) \bicounit_{H} (h') \matcom
		\\ \sigma(r \ot h, 1 \ot h') &= \bicounit_R (r) \bicounit_H(h) \bicounit_{H} (h') \matcom
		\\ \sigma(1 \ot h, r \ot h') &= \bicounit_R (r) \bicounit_H(h) \bicounit_{H} (h') \matcom
		\\ \coninv{\sigma}(r \ot h, r' \ot h') &= \coninv{\sigma}(r \ot 1, h \cdot r' \ot 1) \bicounit_H(h')
		\matdot
		\end{align*}
	\end{lem}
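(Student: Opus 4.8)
The plan is to deduce all four identities from relation $(\ref{rel_defining_zp})$, the normalisation $\sigma(x,1_{\scr{H}})=\bicounit_{\scr{H}}(x)=\sigma(1_{\scr{H}},x)$ of a classical two-cocycle, and the following elementary facts about $R\in\prescript{H}{H}{\scr{YD}}$: one has $h\cdot 1_R=\bicounit_H(h)1_R$, the counit $\bicounit_R$ is $H$-linear (so $\bicounit_R(h\cdot r)=\bicounit_H(h)\bicounit_R(r)$), and the comultiplication $\bicomu_R$ is $H$-linear for the diagonal $H$-action on $R\ot R$, so that $(h\cdot r)_{(1)}\ot(h\cdot r)_{(2)}=h_{(1)}\cdot r_{(1)}\ot h_{(2)}\cdot r_{(2)}$. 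Throughout I write $r\ot 1$ and $1\ot h$ for the images of $r\in R$ and $h\in H$ under the canonical maps into $\scr{H}=R\#H$ and use the explicit multiplication, comultiplication, counit and unit of $\scr{H}$ recorded above.

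The first three identities are immediate. For the first I apply $(\ref{rel_defining_zp})$ to both sides — to the right-hand side with $h'$ replaced by $1_H$ — and use $\bicounit_H(1_H)=1$. For the second, $(\ref{rel_defining_zp})$ with $r'=1_R$ together with $h\cdot 1_R=\bicounit_H(h)1_R$ and $\sigma(r\ot 1,1_{\scr{H}})=\bicounit_{\scr{H}}(r\ot 1)=\bicounit_R(r)$ gives the claim. For the third, $(\ref{rel_defining_zp})$ with first argument $1\ot h$ gives $\sigma(1\ot h,r\ot h')=\sigma(1_{\scr{H}},h\cdot r\ot 1)\bicounit_H(h')=\bicounit_R(h\cdot r)\bicounit_H(h')$, and $\bicounit_R(h\cdot r)=\bicounit_H(h)\bicounit_R(r)$ finishes it.

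For the fourth identity I let $\sigma':\scr{H}\ot\scr{H}\to\fK$ be the map given by its right-hand side, $\sigma'((r\ot h)\ot(r'\ot h')):=\coninv{\sigma}(r\ot 1,h\cdot r'\ot 1)\bicounit_H(h')$. Since $\coninv{\sigma}$ is the (unique, two-sided) convolution inverse of $\sigma$ in $\mathrm{Hom}_\fK(\scr{H}\ot\scr{H},\fK)$ for the tensor-product coalgebra structure of $\scr{H}\ot\scr{H}$, it suffices to check the one-sided identity $\sigma\conv\sigma'=\bicounit_{\scr{H}}\ot\bicounit_{\scr{H}}$. As preparation, I restrict the relation $\sigma\conv\coninv{\sigma}=\bicounit_{\scr{H}}\ot\bicounit_{\scr{H}}$ to arguments $(r\ot 1)\ot(r'\ot 1)$; using $\bicomu_{\scr{H}}(r\ot 1)=(r_{(1)}\ot r_{(2)(-1)})\ot(r_{(2)(0)}\ot 1)$, applying $(\ref{rel_defining_zp})$ to the $\sigma$-factor and collapsing the comodule counit $\bicounit_H(r'_{(2)(-1)})r'_{(2)(0)}=r'_{(2)}$, this becomes
\begin{align*}
	\sigma(r_{(1)}\ot 1,\ r_{(2)(-1)}\cdot r'_{(1)}\ot 1)\ \coninv{\sigma}(r_{(2)(0)}\ot 1,\ r'_{(2)}\ot 1)=\bicounit_R(r)\bicounit_R(r')
\end{align*}
for all $r,r'\in R$; call this relation $(\star)$.

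Finally I compute $\sigma\conv\sigma'$ on $(r\ot h)\ot(r'\ot h')$ using $\bicomu_{\scr{H}}(r\ot h)=(r_{(1)}\ot r_{(2)(-1)}h_{(1)})\ot(r_{(2)(0)}\ot h_{(2)})$ and the analogous expansion of $r'\ot h'$. Applying $(\ref{rel_defining_zp})$ to the $\sigma$-factor, the definition of $\sigma'$ to the $\sigma'$-factor, and collapsing $\bicounit_H(h'_{(1)})\bicounit_H(h'_{(2)})=\bicounit_H(h')$ and $\bicounit_H(r'_{(2)(-1)})r'_{(2)(0)}=r'_{(2)}$, the expression reduces to
\begin{align*}
	\bicounit_H(h')\ \sigma(r_{(1)}\ot 1,\ (r_{(2)(-1)}h_{(1)})\cdot r'_{(1)}\ot 1)\ \coninv{\sigma}(r_{(2)(0)}\ot 1,\ h_{(2)}\cdot r'_{(2)}\ot 1)\matdot
\end{align*}
By the module axiom $r_{(2)(-1)}\cdot(h_{(1)}\cdot r'_{(1)})=(r_{(2)(-1)}h_{(1)})\cdot r'_{(1)}$ and the identity $(h\cdot r')_{(1)}\ot(h\cdot r')_{(2)}=h_{(1)}\cdot r'_{(1)}\ot h_{(2)}\cdot r'_{(2)}$, this is exactly $\bicounit_H(h')$ times $(\star)$ evaluated at $(r,\,h\cdot r')$, hence equals $\bicounit_H(h')\bicounit_R(r)\bicounit_R(h\cdot r')=\bicounit_R(r)\bicounit_H(h)\bicounit_R(r')\bicounit_H(h')=(\bicounit_{\scr{H}}\ot\bicounit_{\scr{H}})((r\ot h)\ot(r'\ot h'))$. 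Thus $\sigma\conv\sigma'=\bicounit_{\scr{H}}\ot\bicounit_{\scr{H}}$, so $\sigma'=\coninv{\sigma}$, which is the fourth identity. I expect the only delicate part of the write-up to be the Sweedler-notation bookkeeping in the last two displays: one must keep the three interleaved structures entering $\bicomu_{\scr{H}}$ — the coproduct $\bicomu_R$, the Yetter--Drinfeld coaction, and $\bicomu_H$ — apart, and check that every $\bicounit_H$-collapse lands on a leg occurring nowhere else; the rest is formal manipulation.
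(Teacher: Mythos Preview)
Your proof is correct and follows exactly the paper's approach: the first three identities are read off from $(\ref{rel_defining_zp})$ together with the cocycle normalisation $\sigma(x,1_{\scr{H}})=\bicounit_{\scr{H}}(x)=\sigma(1_{\scr{H}},x)$, and for the fourth you define the right-hand side as a map $\sigma'$ and verify $\sigma\conv\sigma'=\bicounit_{\scr{H}\ot\scr{H}}$, whence $\sigma'=\coninv{\sigma}\conv\sigma\conv\sigma'=\coninv{\sigma}$. Your write-up is considerably more explicit than the paper's (which leaves the Sweedler computation to the reader), and the intermediate relation $(\star)$ together with the $H$-linearity of $\bicomu_R$ is exactly the right way to organise that bookkeeping.
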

	\begin{proof}
		The first three relations follow from (\ref{rel_defining_zp}). For the last relation view the right hand side as a map $\tilde{\sigma} : \scr{H} \ot  \scr{H} \rightarrow \fK$ and show that $\sigma \conv \tilde{\sigma} = \bicounit_{\scr{H} \ot \scr{H}}$, using that $\sigma \in \cocyclesetHp$, which implies $\tilde{\sigma} = \coninv{\sigma} \conv \sigma \conv \tilde{\sigma} = \coninv{\sigma}$.
	\end{proof}

	\begin{lem} \label{lem_cleftp_section_rel}
		Let $[(E,\sect)] \in \cleft'(\scr{H})$. Then for $r \in R, h,h',h'' \in H$:
		\begin{align*}
		\sect (r \ot hh') &= \sect (r \ot h)  \sect (1 \ot h') \matcom
		\\ \sect (1 \ot S_H(h) ) &= \coninv{\sect} (1 \ot h) \matcom 
		\\ \coninv{\sect} ( (1 \ot h) (r \ot h') (1 \ot h'') ) &= \coninv{\sect} (1 \ot h'') \coninv{\sect} (r \ot h') \coninv{\sect} (1 \ot h)
		\matdot
		\end{align*}
	\end{lem}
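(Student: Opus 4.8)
The plan is to deduce the three identities from the defining relation~$(\ref{rel_defining_cleftp})$ by specializing its arguments and by passing to convolution inverses. Since $\scr{H}=R\# H$ is an ordinary Hopf algebra over $\fK$, we are in the classical situation ($\scr{C}=\scr{D}$ the category of $\fK$-vector spaces, $\funCD=\id$, trivial braiding), so $\sect\colon\scr{H}\to E$ is a convolution-invertible right $\scr{H}$-comodule map and Lemma~\ref{lem_conv_alg_morphism}, Remark~\ref{rema_cleft_ext_morphism_unit} and Proposition~\ref{prop_cleft_obj_yields_cocycle} apply directly.

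First I would normalize. By Remark~\ref{rema_cleft_ext_morphism_unit} the map $\sect'(x):=\coninv{\sect}(1_{\scr{H}})\,\sect(x)$ is again a section, now with $\sect'(1_{\scr{H}})=\biunit_E$ and $\coninv{\sect'}(1_{\scr{H}})=\biunit_E$; I claim it still lies in $\cleft'(\scr{H})$. The point is that $e:=\sect(1_{\scr{H}})$ is central in $E$: specializing $(\ref{rel_defining_cleftp})$ to $h=h''=1$ gives $\sect(r\ot h')=e\,\sect(r\ot h')\,e$ for all $r,h'$, and since $E$ is a cleft object $\sect$ is an isomorphism (Proposition~\ref{prop_cleft_obj_yields_cocycle} identifies it, via $\coinv{E}\cong\dcatunit$, with the isomorphism $\bimu_E(\iota\ot\sect)$), so $exe=x$ for all $x\in E$; with $x=\biunit_E$ this gives $e^2=\biunit_E$, whence $e$ is central. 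Using centrality and $e^2=\biunit_E$ one then checks that $(\ref{rel_defining_cleftp})$ passes to $\sect'$. So from now on assume $\sect(1_{\scr{H}})=\biunit_E$, and hence $\coninv{\sect}(1_{\scr{H}})=\biunit_E$. I expect this normalization step to be the main obstacle: the conclusion genuinely fails for non-normalized sections (e.g.\ $\sect=-\id$ when $\scr{H}=\fK$, in characteristic $\ne 2$), so one really has to invest the centrality argument here.

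For the first two identities I would argue as follows. In $\scr{H}=R\# H$ one has $(r\ot h)(1\ot h')=r\ot hh'$, so $(\ref{rel_defining_cleftp})$ with $h=1$ reads $\sect(r\ot h'h'')=\sect(1\ot 1)\,\sect(r\ot h')\,\sect(1\ot h'')=\sect(r\ot h')\,\sect(1\ot h'')$; after renaming this is the first identity. Taking in addition $r=1$ shows that $\varphi(h):=\sect(1\ot h)$ defines a unital algebra morphism $\varphi\colon H\to E$; equivalently, $1\ot H$ is a Hopf subalgebra of $\scr{H}$ isomorphic to $H$ and $\varphi$ is the restriction of $\sect$ to it. For a unital algebra morphism out of a Hopf algebra the convolution inverse is precomposition with the antipode, so $\coninv{\varphi}=\varphi\circ S_H$; on the other hand $\iota_L\colon H\to\scr{H}$, $h\mapsto 1\ot h$, is a coalgebra morphism, so $\coninv{\varphi}=\coninv{\sect}\circ\iota_L$ by Lemma~\ref{lem_conv_alg_morphism}. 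Comparing the two expressions yields $\coninv{\sect}(1\ot h)=\varphi(S_H(h))=\sect(1\ot S_H(h))$, the second identity.

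For the third identity I would read $(\ref{rel_defining_cleftp})$ as an equality of linear maps $H\ot\scr{H}\ot H\to E$: its left-hand side is $\sect\circ m_3$ with $m_3:=\bimu_{\scr{H}}^2(\iota_L\ot\id\ot\iota_L)$, a coalgebra morphism (composite of the coalgebra morphisms $\iota_L$ and $\bimu_{\scr{H}}$), while its right-hand side is the threefold convolution product $p_1\conv p_2\conv p_3$, where $p_1=(\sect\iota_L)\ot\bicounit_{\scr{H}}\ot\bicounit_H$, $p_2=\bicounit_H\ot\sect\ot\bicounit_H$ and $p_3=\bicounit_H\ot\bicounit_{\scr{H}}\ot(\sect\iota_L)$. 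Passing to convolution inverses of both sides, Lemma~\ref{lem_conv_alg_morphism} gives $\coninv{(\sect\circ m_3)}=\coninv{\sect}\circ m_3$, and anti-multiplicativity of convolution inversion gives $\coninv{(p_1\conv p_2\conv p_3)}=\coninv{p_3}\conv\coninv{p_2}\conv\coninv{p_1}$, where each $\coninv{p_i}$ is obtained from $p_i$ by replacing $\sect$ with $\coninv{\sect}$ (again using $\coninv{(\sect\iota_L)}=\coninv{\sect}\,\iota_L$). Evaluating on $h\ot(r\ot h')\ot h''$ produces exactly the third identity. Apart from the normalization step, everything here is a routine specialization of $(\ref{rel_defining_cleftp})$ together with Lemma~\ref{lem_conv_alg_morphism}.
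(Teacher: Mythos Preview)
Your argument is correct and follows essentially the same route as the paper's (very terse) proof: the first identity from the factorization $r\ot hh'=(r\ot h)(1\ot h')$ specialized in $(\ref{rel_defining_cleftp})$, the second by recognizing that $\sect\,\iota_L$ is a unital algebra map and hence has convolution inverse $\sect\,\iota_L\,S_H$, and the third by reading $(\ref{rel_defining_cleftp})$ as an equality of maps $H\ot\scr{H}\ot H\to E$ and passing to convolution inverses via Lemma~\ref{lem_conv_alg_morphism}. The paper merely gestures at these steps; you spell them out.

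The only divergence is your normalization discussion. The paper does not argue this at all: Remark~\ref{rema_cleft_ext_morphism_unit} records once and for all the standing convention that sections satisfy $\sect\,\dbiunit=\biunit_B$, and Lemma~\ref{lem_cleftp_section_rel} is tacitly stated under that convention. Your observation that the first identity is literally false for a non-normalized $\sect$ (e.g.\ $\sect=-\id$) is correct, and your centrality argument showing that the normalized $\sect'$ still satisfies $(\ref{rel_defining_cleftp})$ is a nice supplement. But note that replacing $\sect$ by $\sect'=\coninv{\sect}(1)\sect$ generally changes the isomorphism class in $\scr{B}$ (the map $x\mapsto \coninv{\sect}(1)x$ is not an algebra morphism), so strictly speaking you are not proving the identities for the \emph{given} $\sect$ but for the normalized one; this is exactly why the paper treats normalization as a standing assumption on sections rather than as a reduction step inside the proof. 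Given that convention, your normalization paragraph can be dropped and the rest stands.
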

	\begin{proof}
		The first relation follows from (\ref{rel_defining_cleftp}), since $r \ot hh' = (r \ot h)(1 \ot h')$ in $\scr{H}$.
		For the second relation we simply have to verify that \begin{align*} \sect (\biunit_R \ot S_H) = \coninv{(\sect(\biunit_R \ot \id))} \matdot \end{align*} The third relation can be deduced from (\ref{rel_defining_cleftp}) by viewing it as two morphisms $H \ot \scr{H} \ot H \rightarrow E$ and calculating the convolution inverse on both sides.
	\end{proof}
	
	\begin{prop} \label{prop_cleftbij_restrictsp}
		$\alpha_{\scr{H}}$ restricts to a bijection $\cocyclesetHp \rightarrow \cleft'(\scr{H})$.
	\end{prop}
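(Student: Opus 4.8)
The plan is to use that $\alpha_{\scr{H}}\colon\cocyclesetH\to\cleft(\scr{H})$ is \emph{already} a bijection (Corollary~\ref{cor_cleft_cocycle}): it then suffices to prove, for every $\sigma\in\cocyclesetH$, the equivalence $\sigma\in\cocyclesetHp\iff\alpha_{\scr{H}}(\sigma)\in\cleft'(\scr{H})$, since this forces $\alpha_{\scr{H}}(\cocyclesetHp)=\cleft'(\scr{H})$ and hence the claimed restriction. By Corollary~\ref{cor_cleft_cocycle}, $\alpha_{\scr{H}}(\sigma)$ is the class of $(B_\sigma,\id_{\scr{H}})$, where $B_\sigma=\fK\#_\sigma\scr{H}$ has underlying vector space $\scr{H}$, its section is $\sect=\id_{\scr{H}}$, and, specialising the formula for $\sigmul$ to $A=\fK$ with trivial braiding, its multiplication is $x\cdot_\sigma y:=\sigmul(x\ot y)=\sigma(x_{(1)}\ot y_{(1)})\,x_{(2)}y_{(2)}$, the product $x_{(2)}y_{(2)}$ being computed in $\scr{H}$. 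Since property~(\ref{rel_defining_cleftp}) is independent of the representative, $\alpha_{\scr{H}}(\sigma)\in\cleft'(\scr{H})$ is equivalent to
\[
(\star)\qquad (1\ot h)\cdot_\sigma(r\ot h')\cdot_\sigma(1\ot h'')=(1\ot h)(r\ot h')(1\ot h'')\qquad(r\in R,\ h,h',h''\in H),
\]
the right-hand side being the product in $\scr{H}$.

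Next I would reduce $(\star)$ to the two relations
\[
(\mathrm{P}1)\quad(1\ot h)\cdot_\sigma(r\ot h')=(1\ot h)(r\ot h'),\qquad\qquad(\mathrm{P}2)\quad(r\ot h')\cdot_\sigma(1\ot h'')=(r\ot h')(1\ot h''),
\]
required to hold in $\scr{H}$ for all $r,h,h',h''$. Putting $h''=1$ (resp.\ $h=1$) in $(\star)$ and using that $1\ot1$ is the unit of both multiplications yields $(\mathrm{P}1)$ (resp.\ $(\mathrm{P}2)$); conversely $(\star)$ follows from $(\mathrm{P}1)$ together with the $\fK$-linear extension of $(\mathrm{P}2)$ applied to the element $(1\ot h)(r\ot h')$. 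So it remains to establish $(\mathrm{P}1)\wedge(\mathrm{P}2)\iff\sigma\in\cocyclesetHp$.

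For $\sigma\in\cocyclesetHp\Rightarrow(\mathrm{P}1)\wedge(\mathrm{P}2)$ I would expand $\sigmul$ on $(1\ot h)\ot(r\ot h')$ and on $(r\ot h')\ot(1\ot h'')$ using the explicit comultiplication $\bicomu_{R\# H}(r\ot h')=(r_{(1)}\ot r_{(2)(-1)}h'_{(1)})\ot(r_{(2)(0)}\ot h'_{(2)})$ and the multiplication of $\scr{H}=R\# H$, then cancel the two $\sigma$-factors by the identities $\sigma(1\ot k,r''\ot\ell)=\bicounit_R(r'')\bicounit_H(k)\bicounit_H(\ell)$ and $\sigma(r''\ot k,1\ot\ell)=\bicounit_R(r'')\bicounit_H(k)\bicounit_H(\ell)$ of Lemma~\ref{lem_cocyclep_rel}; the surviving terms collapse to the undeformed products by the counit axioms for $\bicomu_R$, $\bicounit_R$ and for the Yetter--Drinfeld coaction (one uses $\bicounit_R(r_{(1)})\,r_{(2)(-1)}\ot r_{(2)(0)}=r_{(-1)}\ot r_{(0)}$ and $\bicounit_H(r_{(-1)})\,r_{(0)}=r$). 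For the converse, assume $(\mathrm{P}1)$ and $(\mathrm{P}2)$; by Lemma~\ref{lem_sigmul_eps} (with $A=\fK$) one has $\sigma(x\ot y)=\bicounit_{\scr{H}}(x\cdot_\sigma y)$. Using associativity of $\cdot_\sigma$ (Proposition~\ref{prop_sigmul_assoc}), the special case $(r\ot1)\cdot_\sigma(1\ot h)=r\ot h$ of $(\mathrm{P}2)$, then $(\mathrm{P}1)$, and finally $(\mathrm{P}2)$ once more (in the form $v\cdot_\sigma(1\ot k)=v(1\ot k)$ for $v\in\scr{H}$), one rewrites
\[
(r\ot h)\cdot_\sigma(r'\ot h')=(r\ot1)\cdot_\sigma(1\ot h)\cdot_\sigma(r'\ot h')=\big((r\ot1)\cdot_\sigma(h_{(1)}\cdot r'\ot1)\big)(1\ot h_{(2)}h'),
\]
the outermost product being in $\scr{H}$; applying the algebra morphism $\bicounit_{\scr{H}}$ and $\sigma=\bicounit_{\scr{H}}\circ\sigmul$ once more gives $\sigma(r\ot h,r'\ot h')=\sigma(r\ot1,h\cdot r'\ot1)\bicounit_H(h')$, i.e.\ $\sigma\in\cocyclesetHp$.

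The main obstacle is the forward implication of the last step: one has to track the Yetter--Drinfeld coaction of $r$ through $\bicomu_{R\# H}$ and $\bimu_{R\# H}$ while the $\sigma$-factors are cancelled via Lemma~\ref{lem_cocyclep_rel}. This is routine but is the one place where the module and comodule structures genuinely interact; everything else is formal manipulation of convolution and of the unit of $\scr{H}$. (Both implications of the last equivalence can also be run directly through $(\star)$ without isolating $(\mathrm{P}1)$ and $(\mathrm{P}2)$, but factoring through these two relations keeps the computations shortest.)
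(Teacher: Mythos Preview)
Your argument is correct, and the forward direction ($\sigma\in\cocyclesetHp\Rightarrow\alpha_{\scr H}(\sigma)\in\cleft'(\scr H)$) is essentially the paper's: both work with the canonical representative $(\fK\#_\sigma\scr H,\id)$ and verify~(\ref{rel_defining_cleftp}) by killing the $\sigma$-factor via Lemma~\ref{lem_cocyclep_rel}. Your reduction to $(\mathrm P1)\wedge(\mathrm P2)$ is a clean way to organise this.

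The converse, however, is handled differently. The paper takes an \emph{arbitrary} representative $(E,\sect)$ with~(\ref{rel_defining_cleftp}) and computes $\sigma_\sect$ directly from its defining formula $\bimu_E(\sect\ot\sect)\conv(\coninv\sect\,\dbimu)$; this requires Lemma~\ref{lem_cleftp_section_rel} on the behaviour of $\coninv\sect$ and an explicit appeal to the Yetter--Drinfeld compatibility of $R$ in the penultimate step. You instead exploit bijectivity of $\alpha_{\scr H}$ to stay at the canonical representative, and then recover $\sigma$ as $\bicounit_{\scr H}\circ\sigmul$ (Lemma~\ref{lem_sigmul_eps}) after rewriting $(r\ot h)\cdot_\sigma(r'\ot h')$ via $(\mathrm P1)$, $(\mathrm P2)$ and associativity. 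This bypasses Lemma~\ref{lem_cleftp_section_rel} and any manipulation of $\coninv\sect$ entirely, and the Yetter--Drinfeld condition never appears explicitly (it is hidden in the fact that $\scr H$ is a Hopf algebra). The trade-off is that the paper's route shows concretely how~(\ref{rel_defining_cleftp}) forces~(\ref{rel_defining_zp}) at the level of an arbitrary cleft object, while yours gives a shorter and more uniform argument once one is willing to invoke the bijection $\alpha_{\scr H}$ up front.
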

	\begin{proof}
		Let $\sigma \in \cocyclesetHp$, then one must show (\ref{rel_defining_cleftp}) for the trivial section $\sect=\id : \scr{H} \rightarrow \fK \#_\sigma \scr{H}$, which is straight forward. Conversely if $[(E,\sect)] \in \cleft'(\scr{H})$, then indeed (\ref{rel_defining_zp}) holds for $\sigma_{\sect}$:
		\begin{align*}
			&\sigma_{\sect}(r \ot h, r' \ot h') 
			\\ =& \sect \left( r_{(1)} \ot r_{(2)(-1)} h_{(1)} \right) \sect \left( {r'}_{(1)} \ot {r'}_{(2)(-1)} {h'}_{(1)} \right) 
			\\ &\coninv{\sect} \left( r_{(2)(0)} (h_{(2)} \cdot {r'}_{(2)(0)} ) \ot h_{(3)} {h'}_{(2)} \right)
			\\ =&  \sect \left( r_{(1)} \ot r_{(2)(-1)} \right) \sect \left(1 \ot h_{(1)} \right) \sect \left( {r'}_{(1)} \ot {r'}_{(2)(-1)} \right) 
			\\ &\coninv{\sect} \left( 1 \ot h_{(3)} \right) \coninv{\sect} \left( r_{(2)(0)} (h_{(2)} \cdot {r'}_{(2)(0)} ) \ot 1 \right) \bicounit_H (h')
			\\ =& \sect (r_{(1)} \ot r_{(2)(-1)}) \sect \left( h_{(1)} \cdot {r'}_{(1)} \ot h_{(2)} {r'}_{(2)(-1)} S_H (h_{(4)} ) \right) \\& \coninv{\sect} \left( r_{(2)(0)} (h_{(3)} \cdot {r'}_{(2)(0)}) \ot 1 \right) \bicounit_H (h') 
			\\ =& \sect (r_{(1)} \ot r_{(2)(-1)}) \sect \left( h_{(1)} \cdot {r'}_{(1)} \ot (h_{(2)} \cdot {r'}_{(2)})_{(-1)} \right) \\& \coninv{\sect} \left( r_{(2)(0)} (h_{(2)} \cdot {r'}_{(2)})_{(0)} \ot 1 \right) \bicounit_H (h')
			\\ =& \sigma_{\sect}(r \ot 1, h \cdot r' \ot 1) \bicounit_H(h') \matdot
		\end{align*}
		The second and third equalities use Lemma \ref{lem_cleftp_section_rel} and the second last equality uses that $R$ is a Yetter-Drinfeld module.
	\end{proof}
	
	\begin{rema}
		Observe that an $R$-cleft object $\scr{E}$ is an $H$-module algebra, since $\bimu_{\scr{E}}$, $\biunit_{\scr{E}}$ are morphisms in $\scr{D}=H\mathrm{-mod}$.
	\end{rema}
	
	\begin{lem} \label{lem_cleftr_gives_clefth}
		Let $[(\scr{E},\sect_\scr{E})] \in \cleft(R)$. Then $\scr{E} \# H$ is an $\scr{H}$-cleft object with section $\sect_{\scr{E}} \ot \id$ and $[(\scr{E} \# H, \sect_{\scr{E}} \ot \id)] \in \cleft'(\scr{H})$.
	\end{lem}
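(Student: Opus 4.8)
The plan is to equip $\scr{E}\#H$ with an $\scr{H}$-comodule algebra structure modelled on the comultiplication of $\scr{H}=R\#H$, and then to verify in turn that $\scr{E}\#H$ is an $\scr{H}$-cleft extension, that it is an $\scr{H}$-cleft object, and that the resulting class lies in $\cleft'(\scr{H})$. Writing $\coact_{\scr{E}}(e)=e_{(0)}\ot e_{(1)}$ with $e_{(1)}\in R$, and $\xi\mapsto\xi_{(-1)}\ot\xi_{(0)}$ for the $H$-coaction of the Yetter--Drinfeld module $R$, I would take the coaction
\[
\coact_{\scr{E}\#H}(e\ot h)=\bigl(e_{(0)}\ot(e_{(1)})_{(-1)}h_{(1)}\bigr)\ot\bigl((e_{(1)})_{(0)}\ot h_{(2)}\bigr),
\]
i.e.\ the expression for $\bicomu_{R\#H}$ with $\coact_{\scr{E}}$ in place of $\bicomu_R$. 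Checking that this is a counital coassociative coaction and an algebra morphism into the ordinary tensor algebra $(\scr{E}\#H)\ot\scr{H}$ is a routine computation of bosonisation type: it uses that $\scr{E}$ is an $H$-module algebra, that $\coact_{\scr{E}}$ is $H$-linear, that $\coact_{\scr{E}}$ is multiplicative for the twisted product $(\bimu_{\scr{E}}\ot\bimu_R)(\id\ot\brd_{R,\scr{E}}\ot\id)$ on $\scr{E}\ot R$, and the Hopf-algebra axioms of $R$ in $\prescript{H}{H}{\scr{YD}}$. The only subtle point is that $\scr{E}$ lies merely in $H\mathrm{-mod}$, so one must confirm that the computation never invokes an ($H$-comodule) structure on $\scr{E}$ itself, only the $H$-coaction on the $R$-valued outputs of $\coact_{\scr{E}}$.

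Next I would show that $\sect:=\sect_{\scr{E}}\ot\id_H\colon\scr{H}\to\scr{E}\#H$, $r\ot h\mapsto\sect_{\scr{E}}(r)\ot h$, is a section. It is an $\scr{H}$-comodule morphism: substituting colinearity of $\sect_{\scr{E}}$, i.e.\ $\coact_{\scr{E}}\sect_{\scr{E}}=(\sect_{\scr{E}}\ot\id)\bicomu_R$, into the displayed coaction turns $\coact_{\scr{E}\#H}(\sect_{\scr{E}}(r)\ot h)$ into $(\sect_{\scr{E}}\ot\id\ot\id)\bicomu_{\scr{H}}(r\ot h)$. For convolution invertibility I would write down the inverse explicitly out of $\coninv{\sect_{\scr{E}}}$ and $S_H$: since $r\ot h=(r\ot 1)(1\ot h)$ in $\scr{H}$, since $h\mapsto 1_{\scr{E}}\ot h$ is an algebra map $H\to\scr{E}\#H$ (hence convolution invertible, with inverse $h\mapsto 1_{\scr{E}}\ot S_H(h)$) and since $r\mapsto\sect_{\scr{E}}(r)\ot 1_H$ is convolution invertible (with inverse $r\mapsto\coninv{\sect_{\scr{E}}}(r)\ot 1_H$, by $\sect_{\scr{E}}\conv\coninv{\sect_{\scr{E}}}=\biunit_{\scr{E}}\bicounit_R$), a convolution inverse of $\sect$ is given by a formula $r\ot h\mapsto\bigl(S_H(\cdots)\act\coninv{\sect_{\scr{E}}}(\cdots)\bigr)\ot S_H(\cdots)$ obtained by transporting $\coninv{\sect_{\scr{E}}}$ and $S_H$ across the coaction, and confirmed by a direct calculation. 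Thus $\scr{E}\#H$ is an $\scr{H}$-cleft extension with section $\sect_{\scr{E}}\ot\id$.

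For the cleft object property I would verify the equalizer diagram of Definition~\ref{def_cleft} directly. If $x\in\scr{E}\#H$ satisfies $\coact_{\scr{E}\#H}(x)=x\ot\biunit_{\scr{H}}$, then applying $\id\ot\bicounit_R\ot\id$ and using the counit axioms of the $R$-coaction of $\scr{E}$ and of the $H$-coaction of $R$ forces first $x=y\ot 1_H$ for some $y\in\scr{E}$ and then $\coact_{\scr{E}}(y)=y\ot 1_R$, so $y$ lies in the coinvariants of $\scr{E}$; since $\scr{E}$ is an $R$-cleft \emph{object} these coinvariants are $\dcatunit=\fK$, so $x\in\fK\,(1_{\scr{E}}\ot 1_H)$, and conversely $\coact_{\scr{E}\#H}(1_{\scr{E}}\ot 1_H)=(1_{\scr{E}}\ot 1_H)\ot\biunit_{\scr{H}}$; hence $\biunit_{\scr{E}\#H}$ is the equalizer and $\scr{E}\#H$ is an $\scr{H}$-cleft object. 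Finally, for $(\ref{rel_defining_cleftp})$ --- using, as Remark~\ref{rema_cleft_ext_morphism_unit} permits, the representative with $\sect_{\scr{E}}(1_R)=1_{\scr{E}}$ --- both sides of $\sect((1\ot h)(r\ot h')(1\ot h''))=\sect(1\ot h)\,\sect(r\ot h')\,\sect(1\ot h'')$ reduce to $(h_{(1)}\act\sect_{\scr{E}}(r))\ot h_{(2)}h'h''$: the left-hand side because $(1\ot h)(r\ot h')(1\ot h'')=(h_{(1)}\act r)\ot h_{(2)}h'h''$ in $\scr{H}$ and $\sect_{\scr{E}}$ is $H$-linear, and the right-hand side by the smash-product multiplication of $\scr{E}\#H$ together with $\sect_{\scr{E}}(1_R)=1_{\scr{E}}$ and $h\act 1_{\scr{E}}=\bicounit_H(h)1_{\scr{E}}$. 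Hence $[(\scr{E}\#H,\sect_{\scr{E}}\ot\id)]\in\cleft'(\scr{H})$.

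I expect the main obstacle to be the first step: verifying multiplicativity and coassociativity of $\coact_{\scr{E}\#H}$ is a lengthy diagram chase in which one must stay disciplined about using only the compatibilities $\scr{E}$ actually has in $H\mathrm{-mod}$. An alternative that bypasses most of it is to first reduce, via Corollary~\ref{cor_cleft_cocycle} for the braided pair with ``$H$''${}=R$, to the normal form $\scr{E}=\fK\#_\sigma R$, $\sect_{\scr{E}}=\id_R$ with $\sigma\in\cocyclesetR$, then identify the algebra $(\fK\#_\sigma R)\#H$ with the classical crossed product $\fK\#_{\sigma^\flat}\scr{H}$ for the classical cocycle $\sigma^\flat(r\ot h,\,r'\ot h')=\sigma(r\ot(h\act r'))\,\bicounit_H(h')$, and apply Proposition~\ref{prop_cocycle_yields_cleft_obj} in the category of $\fK$-vector spaces; there the section is $\id_{\scr{H}}$, an algebra map, so $(\ref{rel_defining_cleftp})$ holds automatically, and the only work left is comparing the two multiplications and matching the coactions.
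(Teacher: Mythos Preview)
Your proposal is correct and follows essentially the same approach as the paper: the same bosonised coaction on $\scr{E}\#H$, the same section $\sect_{\scr{E}}\ot\id$ with convolution inverse built from $\coninv{\sect_{\scr{E}}}$ and $S_H$ (the paper writes it as $(\biunit_{\scr{E}}\bicounit_R\ot S_H)\conv(\coninv{\sect_{\scr{E}}}\ot\biunit_H\bicounit_H)$), the same element-level computation of coinvariants, and the same verification of (\ref{rel_defining_cleftp}) via $H$-linearity of $\sect_{\scr{E}}$. Your alternative route through $\sigma^\flat$ is precisely the content of the map $\cocyclebij$ in Proposition~\ref{prop_cleftcocyclebij}, which the paper develops immediately afterwards, so it is a legitimate shortcut but would make the logical dependencies in Section~\ref{sect_liftings} circular as written.
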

	\begin{proof}
		For $e \in \scr{E}$ we denote $\coact_\scr{E} (e) = e_{(0)} \ot e_{(1)} \in \scr{E} \ot R$.
		$E:=\scr{E} \# H$ is a $\scr{H}$-comodule algebra, where the coaction $\rho_E : E \rightarrow E \ot \scr{H}$ for $e\in \scr{E}, h \in H$ is given by
		\begin{align*}
			\rho_E(e \ot h) = e_{(0)} \ot e_{(1)(-1)} h_{(1)} \ot e_{(1)(0)} \ot h_{(2)} \in \scr{E} \ot H \ot R \ot H \matdot
		\end{align*}
		Moreover $\sect_E := \sect_{\scr{E}} \ot \id : \scr{H} \rightarrow E$ defines a section, convolution invertible with inverse $(\biunit_{\scr{E}} \bicounit_R \ot \antip_H) \conv (\coninv{\sect_{\scr{E}}} \ot \biunit_H \bicounit_H)$. We also have for $r\in R$, $h,h',h'' \in H$
		\begin{align*}
			&\sect_E \left( (1 \ot h) ( r \ot h') (1 \ot h'') \right) 
			= \sect_E \left( h_{(1)} \cdot r \ot h_{(2)} h' h'' \right)
			\\=& \sect_\scr{E} (h_{(1)} \cdot r) \ot h_{(2)} h' h'' 
			= h_{(1)} \cdot \sect_\scr{E} ( r) \ot h_{(2)} h' h''
			\\=& (1 \ot h) (\sect_{\scr{E}}( r) \ot h') (1 \ot h'') 
			= \sect_E (1 \ot h) \sect_E( r \ot h') \sect_E(1 \ot h'') 
			\matcom
		\end{align*}
		since $\sect_{\scr{E}}$ is a $H$-module morphism. Finally if $e \ot h \in E^{\mathrm{co} \scr{H}}$, then
		\begin{align*}
			e_{(0)} \ot e_{(1)(-1)} h_{(1)} \ot e_{(1)(0)} \ot h_{(2)} = e \ot h \ot 1 \ot 1  \matdot
		\end{align*}
		Applying $ \id \ot \bicounit_H\ot \id \ot \bicounit_H$ to that relation yields $\bicounit_H(h) e \in \scr{E}^{\mathrm{co} R} = \fK$. Applying $\id \ot \bicounit_H\ot \bicounit_R \ot \id$ gives $e \ot h = \bicounit_H (h) e \ot 1 \in \fK$. Hence $E^{\mathrm{co} \scr{H}} = \fK$.
	\end{proof}
	
	\begin{prop} \label{prop_cleftcocyclebij}
		The maps
		\begin{align*}
		&\cocyclebij:  \cocyclesetR \rightarrow \cocyclesetHp, && \cocyclebij(\pi) \left( r \ot h , r' \ot h' \right) = \pi \left(r , h \cdot r' \right) \bicounit_H(h') \matcom
		\\ &\cleftbij:  \cleft(R) \rightarrow \cleft'(\scr{H}), && \cleftbij([(\scr{E},\sect)]) =  [(\scr{E} \# H, \sect \ot \id)] \matcom
		\end{align*}
		for all $\pi \in \cocyclesetR$, $r,r' \in R$, $h,h' \in H$, $[(\scr{E},\gamma)] \in \cleft(R)$, are well defined bijections. Moreover the following diagram commutes:
		\begin{align*}\begin{tikzcd}[ampersand replacement=\&]
		\cocyclesetR \arrow{d}{\alpha_R} \arrow{r}{\cocyclebij} \& \cocyclesetHp \arrow{d}{\alpha_{\scr{H}}} \\
		\cleft(R) \arrow{r}{\cleftbij} \& \cleft'(\scr{H}) 
		\end{tikzcd} \matdot \end{align*}
	\end{prop}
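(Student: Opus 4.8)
The plan is to verify each of the three claims in turn---that $\Phi$ is well defined, that $\Psi$ is well defined, that both are bijections---and then to check commutativity of the square, which in fact subsumes the bijectivity statements once we know $\alpha_R$ and $\alpha_{\scr H}$ are bijections.

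First I would show $\Phi$ is well defined, i.e.\ that for a cocycle $\pi : R\ot R \to \fK$ in $H\mathrm{-mod}$ the formula $\Phi(\pi)(r\ot h, r'\ot h') = \pi(r, h\cdot r')\bicounit_H(h')$ yields an element of $\cocyclesetHp$. Being in $\cocyclesetHp$ means two things: $\Phi(\pi)$ is a classical two-cocycle on $\scr H = R\# H$, and it satisfies $(\ref{rel_defining_zp})$. The relation $(\ref{rel_defining_zp})$ is immediate from the formula. For the cocycle condition, the cleanest route is \emph{not} to expand the classical two-cocycle axiom directly, but to recognize $\Phi(\pi)$ as the ``bosonization'' of $\pi$: unwinding Definition~\ref{def_cocycle} in the category $\scr D = H\mathrm{-mod}$ with $A = \dcatunit = \fK$ (so $\mact_A = \ol{\bicounit_R}$, the measuring is trivial, and $\brd^\mact_{H,A} = \id$), relations $(\ref{defi_cocycle_rel1})$--$(\ref{defi_cocycle_rel3})$ for $\pi$ become exactly the statement that $\pi$ is convolution invertible, $\pi(r_{(1)}, r'_{(1)})\,\pi(r_{(2)}r'_{(2)}, r'')= \pi(r'_{(1)}, r''_{(1)})\,\pi(r, r'_{(2)}r''_{(2)})$ and $\pi(1,1)=1$ --- i.e.\ the Majid/left-braided version of the Sweedler two-cocycle condition for $R$. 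One then checks by a Yetter--Drinfeld computation (using the explicit $\bimu_{\scr H}$, $\bicomu_{\scr H}$ above) that these translate precisely into the classical two-cocycle identity for $\Phi(\pi)$ on $\scr H$; convolution invertibility of $\Phi(\pi)$ follows from that of $\pi$ with inverse $\coninv{\Phi(\pi)}(r\ot h,r'\ot h')=\coninv{\pi}(r,h\cdot r')\bicounit_H(h')$, matching the last formula of Lemma~\ref{lem_cocyclep_rel}.

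For $\Psi$, well-definedness is essentially Lemma~\ref{lem_cleftr_gives_clefth}: it tells us $(\scr E\#H,\sect_{\scr E}\ot\id)$ is an $\scr H$-cleft object lying in $\cleft'(\scr H)$, and independence of the isomorphism class follows because an $\ol R$-comodule algebra isomorphism $\scr E\to\scr E'$ intertwining sections induces, after $-\#H$, an $\scr H$-comodule algebra isomorphism intertwining $\sect_{\scr E}\ot\id$ and $\sect_{\scr E'}\ot\id$ (one checks it respects $\bimu_{\scr E\#H}$, $\bicomu$/coaction, and the section --- routine from the explicit formulas). Rather than prove $\Phi$ and $\Psi$ bijective by hand, I would prove the square commutes and then invoke that $\alpha_R,\alpha_{\scr H}$ are bijections (Corollary~\ref{cor_cleft_cocycle}) together with Proposition~\ref{prop_cleftbij_restrictsp}, which says $\alpha_{\scr H}$ restricts to a bijection $\cocyclesetHp\to\cleft'(\scr H)$. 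Given commutativity, $\Psi = \alpha_{\scr H}\circ\Phi\circ\alpha_R^{-1}$ and $\Phi = \alpha_{\scr H}^{-1}\circ\Psi\circ\alpha_R$, so both are composites of bijections, hence bijections; no separate injectivity/surjectivity argument is needed.

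Commutativity of the square means: for a cocycle $\pi\in\cocyclesetR$, the two $\scr H$-cleft objects $\alpha_{\scr H}(\Phi(\pi)) = (\,\fK\#_{\Phi(\pi)}\scr H,\ \id_{\scr H}\,)$ and $\Psi(\alpha_R(\pi)) = \big((\fK\#_\pi R)\# H,\ (\biunit_{\fK}\ot\id_R)\ot\id_H\big)$ are isomorphic in $\scr B$, i.e.\ by an $\scr H$-comodule algebra isomorphism compatible with sections. The natural candidate is the ``associativity'' identification of underlying objects $\fK\ot(R\ot H)\cong(\fK\ot R)\ot H\cong\fK\ot\scr H$, and one must check it carries the crossed-product multiplication $\bimu_{\Phi(\pi)}$ on $\fK\#_{\Phi(\pi)}\scr H$ to the iterated crossed product on $(\fK\#_\pi R)\#H$, carries the coaction $\id\ot\bicomu_{\scr H}$ to $\rho_{(\fK\#_\pi R)\#H}$, and sends the trivial section $\id_{\scr H}$ to $(\biunit_\fK\ot\id_R)\ot\id_H$; the last is clear, and the coaction comparison reduces to the formula for $\bicomu_{\scr H}$ given in the text. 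The main obstacle is the multiplication comparison: this is the ``$\sigma$-twisted'' analogue of the standard fact that bosonization of a smash product is an iterated smash product, and it requires carefully tracking how $\hat\pi$ and $\brd^\nu_{R,\fK}$ (which, with $A=\fK$, collapse considerably) interact with the Yetter--Drinfeld braiding $\brd_{X,Y}(x\ot y)=x_{(-1)}y\ot x_{(0)}$ built into $\bimu_{\scr H}$. Once that single identity is pinned down, everything else is bookkeeping, so I would budget most of the effort there and present the rest as ``straightforward'' verifications referencing the explicit structure maps for $\scr H = R\#H$.
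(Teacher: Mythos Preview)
Your plan has a genuine logical gap in the bijectivity step. From commutativity of the square and bijectivity of the vertical arrows $\alpha_R$ and $\alpha_{\scr H}|_{\cocyclesetHp}$ you only get that $\Phi$ is a bijection \emph{if and only if} $\Psi$ is. Writing $\Psi = \alpha_{\scr H}\circ\Phi\circ\alpha_R^{-1}$ and $\Phi = \alpha_{\scr H}^{-1}\circ\Psi\circ\alpha_R$ and calling each a ``composite of bijections'' is circular: the first formula still contains $\Phi$, the second still contains $\Psi$, and neither has been shown bijective. You must prove bijectivity of at least one horizontal arrow independently. The paper does this for $\Phi$: injectivity is immediate from the formula (restrict to $h=h'=1$), and for surjectivity one takes $\sigma\in\cocyclesetHp$, sets $\pi(r,r')=\sigma(r\ot 1,r'\ot 1)$, and uses the classical cocycle identity for $\sigma$ at carefully chosen arguments (e.g.\ $x=1\ot h$, $y=r\ot 1$, $z=r'\ot 1$ to get $H$-linearity of $\pi$; $x=r\ot 1$, $y=r'\ot 1$, $z=r''\ot 1$ to get the braided cocycle identity for $\pi$), together with Lemma~\ref{lem_cocyclep_rel} for convolution invertibility.

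There is also a methodological difference worth noting. You propose to check that $\Phi(\pi)$ is a classical two-cocycle by expanding the identity directly; the paper avoids this entirely. It observes that $\fK\#_\pi R$ is an $H$-module algebra, so $(\fK\#_\pi R)\# H$ is an associative algebra, then identifies its multiplication with $\mu_{\Phi(\pi)}$ and invokes Proposition~\ref{prop_sigmul_assoc} (associativity of $\mu_\sigma$ forces $\sigma$ to be a cocycle). This single identification simultaneously proves $\Phi(\pi)\in\cocyclesetH$ \emph{and} the commutativity of the square, so what you planned to ``budget most of the effort'' on is exactly the computation that does double duty in the paper's argument. Your direct-verification route for well-definedness is valid but longer, and it does not by itself close the bijectivity gap above.
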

	\begin{proof}
		With Proposition \ref{prop_cleftbij_restrictsp} it is enough to show that the maps are well defined, that $\cocyclebij$ is bijective and that the diagram commutes.
		$\cleftbij$ is well defined by Lemma \ref{lem_cleftr_gives_clefth}. 
		If $\pi \in \cocyclesetR$, then $\cocyclebij(\pi)$ is convolution invertible with inverse
		\begin{align*}
		 \coninv{\cocyclebij}(\pi) \left( r \ot h , r' \ot h' \right) = \coninv{\pi} \left(r , h \cdot r' \right) \bicounit_H(h') \matcom
		\end{align*}
		for all $r,r' \in R$, $h,h' \in H$. $\fK \#_\pi R \in \scr{D}$ is by definition a $H$-module algebra. The multiplication of $(\fK \#_{\pi} R) \# H$ is precisely $\bimu_{\cocyclebij(\pi)}$, hence by Proposition \ref{prop_sigmul_assoc} $\cocyclebij(\pi) \in \cocyclesetH$. It is elementary to conclude $\cocyclebij(\pi) \in \cocyclesetHp$ and thus $\cocyclebij$ is well defined. We can also conclude that the diagram commutes. Finally we show that $\cocyclebij$ is bijective: It is clearly injective and if $\sigma \in \cocyclesetHp$, then Lemma \ref{lem_cocyclep_rel} implies that the map $\pi : R \ot R \rightarrow \fK$, $\pi(r,r') = \sigma(r\ot1 ,r' \ot 1)$ is convolution invertible (regarding $\brd$) with inverse $\coninv{\pi}(r,r') = \coninv{\sigma}(r\ot1 ,r' \ot 1)$. Since $\sigma \in \cocyclesetH$ we have for $x,y,z \in \scr{H}$:
		\begin{align*}
		\sigma(x_{(1)},y_{(1)}) \sigma(x_{(2)}y_{(2)}, z) &= \sigma(y_{(1)},z_{(1)}) \sigma(x,y_{(2)} z_{(2)}) \matdot
		\end{align*}
		Calculating this for $x=1 \ot h$, $y=r \ot 1$, $z=r' \ot 1$, $h \in H$, $r,r' \in R$ we obtain
		\begin{align*}
		\sigma(h_{(1)} \cdot r \ot 1, h_{(2)} \cdot r' \ot 1) = \bicounit_H(h) \sigma (r\ot1, r'\ot 1) \matcom
		\end{align*}		
		i.e. that $\pi$ is $H$-module morphism.
		Using $x = r \ot 1$, $y = r' \ot 1$, $z=r'' \ot 1$, $r,r',r'' \in R$ we obtain
		\begin{align*}
		&\sigma(r_{(1)} \ot 1, r_{(2)(-1)} \cdot {r'}_{(1)} \ot 1) \sigma(r_{(2)(0)} {r'}_{(2)} \ot 1, r'' \ot 1) 
		\\=& \sigma({r'}_{(1)} \ot 1, {r'}_{(2)(-1)} \cdot {r''}_{(1)} \ot 1) \sigma(r \ot 1, {r'}_{(1)(0)}  {r''}_{(2)} \ot 1) \matcom
		\end{align*}
		i.e. $\pi \in \cocyclesetR$. Now $\cocyclebij(\pi)=\sigma$, hence $\cocyclebij$ is surjective.
	\end{proof}
	
	\begin{cor} \label{cor_cleftcocyclebij}
		We have
		\begin{align*}
		\cleft' (\scr{H}) &= \lbrace [(\scr{E} \# H,\sect \ot \id)] \,|\, [(\scr{E},\sect)] \in \cleft (R) \rbrace 
		\\&= \lbrace [(k \#_{\cocyclebij(\pi)} \scr{H}, \id_\scr{H})] \,|\, \pi \in \cocyclesetR \rbrace \matdot
		\end{align*}
	\end{cor}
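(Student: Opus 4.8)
The plan is to obtain both equalities as formal consequences of the bijections already assembled in Proposition~\ref{prop_cleftcocyclebij} and Corollary~\ref{cor_cleft_cocycle}, so that the statement merely reads off the images of these maps.

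For the first equality, recall from Proposition~\ref{prop_cleftcocyclebij} that $\cleftbij\colon\cleft(R)\to\cleft'(\scr{H})$, $[(\scr{E},\sect)]\mapsto[(\scr{E}\#H,\sect\ot\id)]$, is a well-defined bijection, hence surjective. Since the image of $\cleftbij$ is by definition the set $\{[(\scr{E}\#H,\sect\ot\id)]\mid[(\scr{E},\sect)]\in\cleft(R)\}$, this set equals $\cleft'(\scr{H})$. (Lemma~\ref{lem_cleftr_gives_clefth} already gives the containment of this set in $\cleft'(\scr{H})$; surjectivity of $\cleftbij$ is what promotes it to an equality.)

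For the second equality, I would use the commuting square of Proposition~\ref{prop_cleftcocyclebij},
\[
\alpha_{\scr{H}}\circ\cocyclebij=\cleftbij\circ\alpha_R\colon\cocyclesetR\longrightarrow\cleft'(\scr{H}).
\]
By Proposition~\ref{prop_cleftcocyclebij} the map $\cocyclebij\colon\cocyclesetR\to\cocyclesetHp$ is a bijection, by Corollary~\ref{cor_cleft_cocycle} so is $\alpha_R\colon\cocyclesetR\to\cleft(R)$, and $\cleftbij$ is the bijection from the first part; hence $\alpha_{\scr{H}}\circ\cocyclebij$ is a bijection onto $\cleft'(\scr{H})$, i.e.\ its image is all of $\cleft'(\scr{H})$. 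To describe this image explicitly, note that $\alpha_{\scr{H}}$ is the correspondence of Corollary~\ref{cor_cleft_cocycle} applied with the category of $\fK$-vector spaces as base, $\funCD=\id$, and $\scr{H}$ as the Hopf algebra, so that $\dcatunit=\fK$; under it a cocycle $\sigma\colon\scr{H}\ot\scr{H}\to\fK$ is sent to the class of $(\fK\#_\sigma\scr{H},\id_\scr{H})$. Applying this with $\sigma=\cocyclebij(\pi)\in\cocyclesetHp\subseteq\cocyclesetH$ for each $\pi\in\cocyclesetR$ gives $\alpha_{\scr{H}}(\cocyclebij(\pi))=[(\fK\#_{\cocyclebij(\pi)}\scr{H},\id_\scr{H})]$, so the image of $\alpha_{\scr{H}}\circ\cocyclebij$ is exactly $\{[(\fK\#_{\cocyclebij(\pi)}\scr{H},\id_\scr{H})]\mid\pi\in\cocyclesetR\}$, which is therefore equal to $\cleft'(\scr{H})$.

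There is no genuine obstacle here: everything is set-theoretic manipulation of images of bijections established in Proposition~\ref{prop_cleftcocyclebij} and Corollary~\ref{cor_cleft_cocycle}. The only point that calls for care is bookkeeping — keeping straight that $\alpha_R$, $\alpha_{\scr{H}}$, $\cocyclebij$, $\cleftbij$ are precisely the maps of Proposition~\ref{prop_cleftcocyclebij}, and in particular that $\alpha_{\scr{H}}$ is the instance of Corollary~\ref{cor_cleft_cocycle} in the trivially braided category of $\fK$-vector spaces (where $\dcatunit=\fK$), which is what makes the explicit formula $\sigma\mapsto[(\fK\#_\sigma\scr{H},\id_\scr{H})]$ available.
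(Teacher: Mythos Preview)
Your proposal is correct and matches the paper's intent: the corollary is stated without proof immediately after Proposition~\ref{prop_cleftcocyclebij}, precisely because both equalities are the images of the surjections $\cleftbij$ and $\alpha_{\scr{H}}\circ\cocyclebij$ established there. Your explicit unpacking of this is exactly what the reader is meant to supply.
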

	
	\begin{defi}
		For $\sigma \in \cocyclesetH$, $\scr{H}_\sigma$ denotes the Hopf algebra that as a coalgebra is $\scr{H}$, with multiplication
		\begin{align*}
		\bimu_{\scr{H}_\sigma}(x,y) &= \sigma(x_{(1)} \ot y_{(1)}) \, x_{(2)} y_{(2)} \, \coninv{\sigma} (x_{(3)} \ot y_{(3)}) \matcom
		\end{align*}
		for all $x,y \in \scr{H}$.
	\end{defi}
	
	Assume $H$ to be finite-dimensional and cosemisimple and assume $R$ to be finite dimensional connected graded, with grading $R= \oplus_{k \in \ndN_0} R_{(k)}$. Observe that then $\scr{H}_{(k)} = R_{(k)} \ot H$, $k\ge 0$ defines a Hopf algebra grading on $\scr{H} = R \# H$.
	
	\begin{lem} \label{lem_gr_cocyclep}
		If $\sigma \in \cocyclesetHp$, then $\mathrm{gr} \scr{H}_\sigma \cong \scr{H}$.
	\end{lem}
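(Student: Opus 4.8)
The plan is to use that $\scr{H}$ is coradically graded, so that its coradical filtration is the grading filtration, and that $\scr{H}_\sigma$ has the same underlying coalgebra as $\scr{H}$. Since $H$ is cosemisimple and $R$ is connected graded, the coradical of $\scr{H}=R\# H$ is $\scr{H}_{(0)}=R_{(0)}\ot H=\fK\ot H$, and the coradical filtration is $\scr{H}_{[n]}:=\bigoplus_{k\le n}\scr{H}_{(k)}$. As $\scr{H}_\sigma$ and $\scr{H}$ agree as coalgebras, $\scr{H}_\sigma$ carries the same coradical filtration, so $\mathrm{gr}\,\scr{H}_\sigma$ has the coalgebra structure of $\scr{H}$ and everything reduces to identifying the multiplication that $\bimu_{\scr{H}_\sigma}$ induces on $\mathrm{gr}\,\scr{H}_\sigma$.

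For this I would take homogeneous $x\in\scr{H}_{(m)}$, $y\in\scr{H}_{(n)}$ and examine
\[
\bimu_{\scr{H}_\sigma}(x,y)=\sigma(x_{(1)}\ot y_{(1)})\,x_{(2)}y_{(2)}\,\coninv{\sigma}(x_{(3)}\ot y_{(3)}).
\]
Because $\bicomu_{\scr{H}}$ and $\bimu_{\scr{H}}$ are graded, each summand $x_{(2)}y_{(2)}$ is homogeneous of degree $\deg x_{(2)}+\deg y_{(2)}\le m+n$, so $\bimu_{\scr{H}_\sigma}(x,y)\in\scr{H}_{[m+n]}$ (confirming the filtration is multiplicative, so that $\mathrm{gr}$ makes sense). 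A summand contributes in degree $m+n$ only when $\deg x_{(2)}=m$ and $\deg y_{(2)}=n$, i.e.\ only when $x_{(1)},x_{(3)},y_{(1)},y_{(3)}$ all lie in $\scr{H}_{(0)}=\fK\ot H$.

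This is where $\sigma\in\cocyclesetHp$ enters: by Lemma~\ref{lem_cocyclep_rel}, on arguments from $\fK\ot H$ both $\sigma$ and $\coninv{\sigma}$ reduce to $\bicounit_{\scr{H}}\ot\bicounit_{\scr{H}}$. Hence the degree-$(m+n)$ component of $\bimu_{\scr{H}_\sigma}(x,y)$ is
\[
\sum\bicounit_{\scr{H}}(x_{(1)})\,\bicounit_{\scr{H}}(y_{(1)})\,\bicounit_{\scr{H}}(x_{(3)})\,\bicounit_{\scr{H}}(y_{(3)})\,x_{(2)}y_{(2)}=xy=\bimu_{\scr{H}}(x,y),
\]
where we may sum over all Sweedler components (the extra terms vanish since $\bicounit_{\scr{H}}$ kills $\scr{H}_{(k)}$ for $k\ge1$, $R$ being connected) and the resulting identity is counitality. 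Thus $\bimu_{\scr{H}_\sigma}(x,y)-\bimu_{\scr{H}}(x,y)\in\scr{H}_{[m+n-1]}$, so $\bimu_{\scr{H}}$ is exactly the induced multiplication on $\mathrm{gr}\,\scr{H}_\sigma$; the unit is unchanged because $\sigma$ is normalised by $(\ref{defi_cocycle_rel3})$. Hence $\mathrm{gr}\,\scr{H}_\sigma$ and $\scr{H}$ coincide as graded bialgebras, and therefore as Hopf algebras (the antipode being determined by the bialgebra structure), which gives $\mathrm{gr}\,\scr{H}_\sigma\cong\scr{H}$.

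I expect the main point requiring care to be the bookkeeping around the coradical filtration: checking that $\scr{H}$ is coradically graded for the given grading (so that $\mathrm{gr}$ taken with respect to the coradical filtration agrees with the one taken with respect to the grading), and keeping track of degrees through the iterated coproduct $x_{(1)}\ot x_{(2)}\ot x_{(3)}$. The rest is a direct application of Lemma~\ref{lem_cocyclep_rel} and the counit axioms.
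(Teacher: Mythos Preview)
Your proposal is correct and follows essentially the same approach as the paper: both arguments identify the top-degree part of $\bimu_{\scr{H}_\sigma}(x,y)$ by observing that it forces the outer Sweedler factors $x_{(1)},x_{(3)},y_{(1)},y_{(3)}$ into $\scr{H}_{(0)}=\fK\ot H$, and then invoke Lemma~\ref{lem_cocyclep_rel} to replace $\sigma$ and $\coninv{\sigma}$ on such arguments by counits, recovering $\bimu_{\scr{H}}$. The paper carries this out by an explicit expansion of $(\bicomu_{\scr{H}}\ot\id)\bicomu_{\scr{H}}(r\ot h)$ for homogeneous $r$, whereas you phrase the same computation via abstract degree bookkeeping; your version also makes explicit why the filtration in question is the coradical filtration, a point the paper leaves implicit.
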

	\begin{proof}
		Let $k,l \ge 1$. Since $R$ is connected we have for $r \in R_{(k)}$:
		\begin{align*}
			\bicomu_R (r) &= r\ot 1 + 1 \ot r + d \matcom & \text{where } d \in \oplus_{i=1}^{k-1} R_{(i)} \ot R_{(k-i)} \matdot
		\end{align*}
		Hence we get for $r \in R_{(k)}$, $h \in H$:
		\begin{align*}
			& (\bicomu_{\scr{H}} \ot \id) \bicomu_{\scr{H}} (r \ot h) 
			\\ =& ( r \ot h_{(1)} ) \ot (1 \ot h_{(2)}) \ot (1 \ot h_{(3)}) 
			\\&+ ( 1 \ot r_{(-1)} h_{(1)} ) \ot (r_{(0)} \ot h_{(2)}) \ot (1 \ot h_{(3)})
			 \\&+ ( 1 \ot (r_{(-1)} h_{(1)})_{(1)} ) \ot (1 \ot (r_{(-1)} h_{(1)})_{(2)}) \ot (r_{(0)} \ot h_{(2)}) + d \matcom
		\end{align*}
		where $d \in \oplus^{1\le i_1,i_2,i_3 \le k-1 \matcom}_{i_1+i_2+i_3=k } \scr{H}_{(i_1)} \ot \scr{H}_{(i_2)} \ot \scr{H}_{(i_3)}$. Hence for $r \in R_{(k)}$, $r' \in R_{(l)}$, $h,h' \in H$:
		\begin{align*}
			& \bimu_{\scr{H}_\sigma}(r \ot h, r' \ot h') 
			\\ =& \sigma(1 \ot r_{(-1)} h_{(1)}, 1 \ot {r'}_{(-1)} {h'}_{(1)}) \, r_{(0)} (h_{(2)} \cdot {r'}_{(0)}) \ot h_{(3)} {h'}_{(2)} \\& \coninv{\sigma}(1 \ot h_{(4)}, 1 \ot {h'}_{(3)}) + d \matcom
		\end{align*}
		where $d \in \oplus_{i=0}^{k+l-1} \scr{H}_{(i)}$. Now since $\sigma \in \cocyclesetHp$ we obtain
		\begin{align*}
			& \sigma(1 \ot r_{(-1)}  h_{(1)}, 1 \ot {r'}_{(-1)} {h'}_{(1)}) \, r_{(0)}  (h_{(2)} \cdot {r'}_{(0)}) \ot h_{(3)} {h'}_{(2)} \\& \coninv{\sigma}(1 \ot h_{(4)}, 1 \ot {h'}_{(3)})
			\\=& \bicounit_{H} (r_{(-1)}  h_{(1)}) \bicounit_{H} ({r'}_{(-1)}  {h'}_{(1)}) ( r_{(0)} (h_{(2)} \cdot {r'}_{(0)}) \ot h_{(3)} {h'}_{(2)} ) \\& \bicounit_{H} (h_{(4)}) \bicounit_{H}({h'}_{(3)})
			\\=&   r (h_{(1)} \cdot r') \ot h_{(2)} h' \matdot
		\end{align*}
		Thus the multiplication of $\mathrm{gr} \scr{H}_\sigma$ coincides with the one in $\scr{H}$, giving $\mathrm{gr} \scr{H}_\sigma \cong \scr{H}$.
	\end{proof}
	
		When one is interested in finding all liftings of a coradically graded Hopf algebra, i.e. in our terms all filtered Hopf algebras $L$, such that $\mathrm{gr} L \cong \scr{H}$, the problem reduces to finding all two-cocycles $\sigma : \scr{H} \ot \scr{H} \rightarrow \fK$, such that $\mathrm{gr} \scr{H}_\sigma \cong \scr{H}$  (\cite{guidedtour}, section 3.1). This is given by finding all $\scr{H}$-cleft objects $E$, such that $\mathrm{gr} \scr{H}_{\sigma_{\sect}} \cong \scr{H}$ for some section $\sect:\scr{H} \rightarrow E$. The set of ($\ol{H}$-comodule algebra) isomorphism classes $[E]$ of $\ol{H}$-cleft objects $E$ with this property is often denoted $\cleftcl'(\scr{H})$.
	
	We can describe $\cleftcl'(\scr{H})$ by $\cleft'(\scr{H})$:
	\begin{prop} \label{cor_cleftcocyclebij2}
		The map
		\begin{align*}
		\cleft'(\scr{H}) \rightarrow \cleftcl'(\scr{H}), \, [(E,\sect)] \mapsto [E]
		\end{align*}
		is surjective.
	\end{prop}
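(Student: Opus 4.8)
The plan is to reduce surjectivity to the structural decomposition of lifting cleft objects recorded in \cite{MR3712438}, Lemma~4.1, and then feed the resulting $R$-cleft object into the bijection $\cleftbij$ of Proposition~\ref{prop_cleftcocyclebij}.

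First one checks that the map is well defined, i.e.\ that $[E]\in\cleftcl'(\scr{H})$ whenever $[(E,\sect)]\in\cleft'(\scr{H})$: by Proposition~\ref{prop_cleftbij_restrictsp} the cocycle $\sigma_{\sect}$ lies in $\cocyclesetHp$, hence $\mathrm{gr}\,\scr{H}_{\sigma_{\sect}}\cong\scr{H}$ by Lemma~\ref{lem_gr_cocyclep}, which is precisely the defining property of $\cleftcl'(\scr{H})$, witnessed by the section $\sect$ itself; and $[E]$ clearly depends only on the class $[(E,\sect)]$.

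For surjectivity, let $[E]\in\cleftcl'(\scr{H})$ and choose a section $\sect\colon\scr{H}\to E$ with $\mathrm{gr}\,\scr{H}_{\sigma_{\sect}}\cong\scr{H}$. By \cite{MR3712438}, Lemma~4.1, there are an $R$-comodule algebra $\scr{E}\in H\mathrm{-mod}$ and a convolution invertible $H$-module comodule morphism $\sect_{\scr{E}}\colon R\to\scr{E}$ with $E\cong\scr{E}\#H$ as $\scr{H}$-comodule algebras. In particular $\scr{E}$ is an $R$-cleft extension in $\scr{D}=H\mathrm{-mod}$ in the sense of Definition~\ref{def_cleft}. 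To see that $\scr{E}$ is in fact an $R$-cleft object, one argues as in the proof of Lemma~\ref{lem_cleftr_gives_clefth}: if $e\in\scr{E}^{\mathrm{co} R}$ then the explicit coaction of $\scr{E}\#H$ sends $e\ot 1$ to $e\ot 1\ot 1\ot 1$, so $e\ot 1\in(\scr{E}\#H)^{\mathrm{co} \scr{H}}\cong E^{\mathrm{co} \scr{H}}=\fK$, forcing $e\in\fK$; the reverse inclusion is trivial, so $\scr{E}^{\mathrm{co} R}\cong\dcatunit$.

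Hence $[(\scr{E},\sect_{\scr{E}})]\in\cleft(R)$, and by Proposition~\ref{prop_cleftcocyclebij} (via Lemma~\ref{lem_cleftr_gives_clefth}) we get $\cleftbij([(\scr{E},\sect_{\scr{E}})])=[(\scr{E}\#H,\sect_{\scr{E}}\ot\id)]\in\cleft'(\scr{H})$, a class which our map sends to $[\scr{E}\#H]=[E]$. Thus $[E]$ lies in the image, proving surjectivity. The only ingredient external to this paper is \cite{MR3712438}, Lemma~4.1, and the step that needs the most care is the verification that the decomposition $E\cong\scr{E}\#H$ it furnishes is an isomorphism of $\scr{H}$-comodule algebras (not merely of algebras), since $\cleftcl'(\scr{H})$ consists of comodule algebra isomorphism classes; granting this, everything else is routine bookkeeping with the results established above.
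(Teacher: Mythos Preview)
Your proof is correct but follows a different path than the paper. For surjectivity the paper simply cites \cite{MR3133699}, Proposition~5.8, which directly furnishes, for any representative $E$ of a class in $\cleftcl'(\scr{H})$, a section $\sect:\scr{H}\to E$ satisfying relation~(\ref{rel_defining_cleftp}); then $[(E,\sect)]\in\cleft'(\scr{H})$ is an immediate preimage. You instead invoke the decomposition $E\cong\scr{E}\#H$ from \cite{MR3712438}, Lemma~4.1, verify that $\scr{E}$ is an $R$-cleft object, and then feed it through $\cleftbij$ (Proposition~\ref{prop_cleftcocyclebij}) to manufacture the preimage $[(\scr{E}\#H,\sect_{\scr{E}}\ot\id)]$. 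Your route is longer but has the appeal of exercising the paper's own machinery and connecting back to the decomposition highlighted in the introduction; the paper's route is a one-line appeal to an external result tailored to exactly this situation. Your caveat about needing the isomorphism $E\cong\scr{E}\#H$ to be one of $\scr{H}$-comodule algebras is well placed and is indeed what \cite{MR3712438}, Lemma~4.1 provides. Note also that the section $\sect$ you choose at the start of the surjectivity argument plays no further role once you pass to $\scr{E}$; it can be dropped.
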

	\begin{proof}
		The map is well defined: If $[(E,\sect)] \in \cleft'(\scr{H})$, then $\sigma_{\sect} \in \cocyclesetHp$ by Lemma \ref{prop_cleftbij_restrictsp} and thus by Lemma \ref{lem_gr_cocyclep} $\mathrm{gr} \scr{H}_{\sigma_{\sect}} \cong \scr{H}$ and $[E] \in \cleftcl'(\scr{H})$.
		Now if $A \in \cleftcl'(\scr{H})$, then there exists a representative $E \in A$ and a section $\sect : \scr{H} \rightarrow E$, such that
		\begin{align*}
			\sect ( (1 \ot h) (r \ot h') (1 \ot h'') ) &= \sect (1 \ot h) \sect (r \ot h') \sect (1 \ot h') \matcom
		\end{align*}
		for all $r \in R, h,h',h'' \in H$ (\cite{MR3133699}, Proposition 5.8), i.e. $[(E,\sect)] \in \cleft'(\scr{H})$ is a preimage of $[E]=A$. 
	\end{proof}
	
	Proposition \ref{cor_cleftcocyclebij2} and Corollary \ref{cor_cleftcocyclebij} imply:
	\begin{cor} \label{cor_cleftcocyclebij3}
		We have
		\begin{align*}
		\cleftcl' (\scr{H}) &= \lbrace [\scr{E} \# H] \,|\, [(\scr{E},\sect)] \in \cleft (R) \rbrace 
		= \lbrace [k \#_{\cocyclebij(\pi)} \scr{H}] \,|\, \pi \in \cocyclesetR \rbrace \matdot
		\end{align*}
	\end{cor}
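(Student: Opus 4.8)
The plan is to obtain the statement as a purely formal consequence of the two results named in its preamble, Proposition~\ref{cor_cleftcocyclebij2} and Corollary~\ref{cor_cleftcocyclebij}; no new computation should be needed, only a careful combination of what is already in hand.

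First I would recall from Corollary~\ref{cor_cleftcocyclebij} the two explicit descriptions
\[
\cleft'(\scr{H}) = \lbrace [(\scr{E}\#H,\sect\ot\id)] \mid [(\scr{E},\sect)] \in \cleft(R) \rbrace = \lbrace [(k\#_{\cocyclebij(\pi)}\scr{H},\id_{\scr{H}})] \mid \pi \in \cocyclesetR \rbrace .
\]
Next I would invoke Proposition~\ref{cor_cleftcocyclebij2}, which says that the forgetful map $\cleft'(\scr{H}) \to \cleftcl'(\scr{H})$, $[(E,\sect)] \mapsto [E]$, is \emph{surjective}. Hence every class in $\cleftcl'(\scr{H})$ is of the form $[E]$ for some $(E,\sect)$ representing an element of $\cleft'(\scr{H})$, and conversely every such $[E]$ lies in $\cleftcl'(\scr{H})$ because the map is well defined. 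Applying this forgetful assignment term by term to the two descriptions above — i.e. simply deleting the section from each listed pair — therefore produces exactly the two asserted equalities
\[
\cleftcl'(\scr{H}) = \lbrace [\scr{E}\#H] \mid [(\scr{E},\sect)] \in \cleft(R) \rbrace = \lbrace [k\#_{\cocyclebij(\pi)}\scr{H}] \mid \pi \in \cocyclesetR \rbrace .
\]

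The single point of care is that the equivalence relation defining $\cleft'(\scr{H})$ (isomorphy of pairs $(E,\sect)$ in the category $\scr{B}$) is finer than the one defining $\cleftcl'(\scr{H})$ (isomorphy of the underlying $\ol{H}$-comodule algebra alone), so that $[(E,\sect)] \mapsto [E]$ must be checked to be well defined on isomorphism classes before the image can be described — but this is precisely the well-definedness already verified inside the proof of Proposition~\ref{cor_cleftcocyclebij2}. I do not expect any genuine obstacle in the corollary itself: all of the real content (in particular the surjectivity, which ultimately rests on the external input \cite{MR3133699}) has been placed into Proposition~\ref{cor_cleftcocyclebij2}, and the corollary is just the bookkeeping step of feeding the set-theoretic descriptions of $\cleft'(\scr{H})$ from Corollary~\ref{cor_cleftcocyclebij} through that surjection.
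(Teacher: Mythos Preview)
Your proposal is correct and matches the paper's approach exactly: the paper simply records that the corollary follows from Proposition~\ref{cor_cleftcocyclebij2} and Corollary~\ref{cor_cleftcocyclebij}, and your argument spells out precisely this implication by pushing the explicit descriptions of $\cleft'(\scr{H})$ through the surjection $[(E,\sect)]\mapsto [E]$.
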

	
\newcommand{\etalchar}[1]{$^{#1}$}
\providecommand{\bysame}{\leavevmode\hbox to3em{\hrulefill}\thinspace}
\providecommand{\MR}{\relax\ifhmode\unskip\space\fi MR }
\providecommand{\MRhref}[2]{%
	\href{http://www.ams.org/mathscinet-getitem?mr=#1}{#2}
}
\providecommand{\href}[2]{#2}

\end{document}